\newcommand\ede{ \, := \, }
\newcommand\seq{ \, = \, }
\newcommand\datver[1]{\, \def\datverp
{\par\boxed{\boxed{\text{Version: #1; Run: \today}}}}}\datver{0.1}
\newcommand{\supp}{\operatorname{supp}}
\newcommand\pullback{\sp{\downarrow\downarrow}}
\newcommand{\tto}{\rightrightarrows}
\newcommand\mathbfPsi{\mathbf \Psi}
\newcommand{\Prim}{\operatorname{Prim}}
\newcommand{\End}{\operatorname{End}}
\newcommand{\ess}{\mathrm{ess}}
\newcommand{\CC}{\mathbb C}
\newcommand{\RR}{\mathbb R}
\newcommand{\ZZ}{\mathbb Z}
\newcommand{\CI}{{\mathcal C}^{\infty}}
\newcommand{\CIc}{{\mathcal C}^{\infty}_{\text{c}}}
\newcommand\pa{{\partial}}
\newcommand\Cstar{C\sp{\ast}}
\newcommand\Cs[1]{C\sp{\ast}(#1)}
\newcommand\rCs[1]{C_r\sp{\ast}(#1)}
\newcommand{\Aut}{\operatorname{Aut}}
\newcommand{\Diff}{\operatorname{Diff}}
\newcommand\ssub{stratified submersion}
\newcommand\Lie{\operatorname{Lie}}
\newcommand{\maA}{\mathcal A}
\newcommand{\maC}{\mathcal C}
\newcommand{\maF}{\mathcal F}
\newcommand{\maG}{\mathcal G}
\newcommand{\maH}{\mathcal H}
\newcommand{\maK}{\mathcal K}
\newcommand{\maL}{\mathcal L}
\newcommand{\maM}{\mathcal M}
\newcommand{\maP}{\mathcal P}
\newcommand{\maR}{\mathcal R}
\newcommand{\maS}{\mathcal S}
\newcommand{\maV}{\mathcal V}
\newcommand{\maW}{\mathcal W}
\newcommand{\de}{{\rm d}}
\def\ccinf0{{\mathcal C}_c^{\infty,0}}
\newtheorem{theorem}{Theorem}[section]
\newtheorem{proposition}[theorem]{Proposition}
\newtheorem{corollary}[theorem]{Corollary}
\newtheorem{lemma}[theorem]{Lemma}
\newtheorem{notation}[theorem]{Notations}
\theoremstyle{definition}
\newtheorem{definition}[theorem]{Definition}
\theoremstyle{remark}
\newtheorem{remark}[theorem]{Remark}
\newtheorem{example}[theorem]{Example}
\begin{document}

\title[{Fredholm conditions on non-compact manifolds}]{Fredholm
  conditions on non-compact manifolds: theory and examples}

\author[C. Carvalho]{Catarina Carvalho} \address{{Dep. Matem\'{a}tica,
    Instituto Superior T\'{e}cnico, University of Lisbon, Av. Rovisco
    Pais, 1049-001 Lisbon, Portugal }}
\email{catarina.carvalho@math.tecnico.ulisboa.pt}

\author[V. Nistor]{Victor Nistor} \address{Universit\'{e} de Lorraine,
  UFR MIM, Ile du Saulcy, CS 50128, 57045 METZ, France and
%
%
  Inst. Math. Romanian Acad.  PO BOX 1-764, 014700 Bucharest Romania}
\email{victor.nistor@univ-lorraine.fr}

\author[Y. Qiao]{Yu Qiao} \address{School of Mathematics and
  Information Science,\\ Shaanxi Normal University, Xi'an, 710119,
  China} \email{yqiao@snnu.edu.cn}

\thanks{V.N. has been partially supported by ANR-14-CE25-0012-01. Qiao
  was partially supported by NSF of China (11301317,
  11571211). Carvalho was partially supported by Fundação para a
  Ciência e Tecnologia UID/MAT/04721/2013 (Portugal).\\
Manuscripts available from {\bf http:{\scriptsize
    //}iecl.univ-lorraine.fr{\scriptsize
    /}$\tilde{}$Victor.Nistor{\scriptsize /}}}

\subjclass{58J40 (primary) 58H05, 46L60, 47L80, 47L90}

\keywords{pseudodifferential operator, differential operator, Fredholm
  operator, groupoid, Lie group, Sobolev space, $C^*$-algebras,
  compact operators, non-compact manifold, singular manifold.}


\date\today


\begin{abstract}
We give explicit Fredholm conditions for classes of pseudodifferential
operators on suitable singular and non-compact spaces. In particular,
we include a ``user's guide'' to Fredholm conditions on particular
classes of manifolds including asymptotically hyperbolic manifolds,
asymptotically Euclidean (or conic) manifolds, and manifolds with
poly-cylindrical ends. The reader interested in applications should be
able read right away the results related to those examples, beginning
with Section \ref{sec.ex}.  Our general, theoretical results are that
an operator adapted to the geometry is Fredholm if, and only if, it is
elliptic and all its \emph{limit operators}, in a sense to be made
precise, are invertible. Central to our theoretical results is the
concept of a {\em Fredholm groupoid}, which is the class of groupoids
for which this characterization of the Fredholm condition is valid.
We use the notions of \emph{exhaustive} and \emph{strictly spectral
  families of representations} to obtain a general characterization of
Fredholm groupoids. In particular, we introduce the class of the
so-called \emph{groupoids with Exel's property} as the groupoids for
which the regular representations are exhaustive. We show that the
class of ``\ssub\ groupoids'' has Exel's property, where
\ssub\ groupoids are defined by glueing fibered pull-backs of bundles
of Lie groups. We prove that a \ssub\ groupoid is Fredholm whenever
its isotropy groups are amenable.  Many groupoids, and hence many
pseudodifferential operators appearing in practice, fit into this
framework. This fact is explored to yield Fredholm conditions not only
in the above mentioned classes, but also on manifolds that are
obtained by desingularization or by blow-up of singular sets.
\end{abstract}

\maketitle

\tableofcontents

\section{Introduction}

We obtain in this paper necessary and sufficient conditions for
classes of operators to be Fredholm. Our results specialize to yield
Fredholm conditions for pseudodifferential operators on manifolds with
cylindrical and poly-cylindrical ends, on manifolds that are
asymptotically Euclidean, and on manifolds that are asymptotically
hyperbolic. Other examples of non-compact manifolds covered by our
results include operators obtained by desingularization of suitable
singular spaces by successively blowing up the lowest dimensional
singular strata.

\subsection{{Background and main result}}
Let $M_0$ be a Riemannian manifold and let $P\colon H\sp{s}(M_0; E)
\to H\sp{s-m}(M_0; F)$ be an order $m$ pseudo-differential operator
acting between Sobolev sections of two smooth, Hermitian vector
bundles $E, F$. Recall that $P$ is called {\em elliptic} if its
principal symbol $\sigma_m(P)$ is invertible outside the zero section.
When $M_0$ is \emph{compact,} a classical, well-known result
\cite{CordesHerman68,Seeley59,Seeley63} states that $P$ is Fredholm
if, and only if, it is elliptic.  This classical Fredholm result has
many applications, so a natural question to ask is to what extent it
extends to (suitable) non-compact manifolds.  The example of constant
coefficient differential operators on ${\RR\sp{n}}$ shows that that
this classical result might be no longer true as stated if $M_0$ is
not compact.

On certain classes of manifolds, however, it is possible to
reformulate this classical result as follows. Let $M_0$ be a
non-compact manifold with ``amenable ends'' (see Subsection
\ref{ssec.pseudodifferential} for the precise definition). Then we can
associate to $M_0$ the following data:
\begin{enumerate}[(1)]
 \item Smooth manifolds $M_\alpha$, $\alpha \in I$, and
  \item Lie groups $G_\alpha$ acting on $M_\alpha$, $\alpha \in I$,
\end{enumerate}
(for a suitable index set $I$), satisfying the following theorem.

\begin{theorem}\label{thm.nonclassical}
Let $P$ be an order $m$ pseudodifferential operator on $M_0$
compatible with the geometry.  Then one can associate to $P$ certain
$G_\alpha$-invariant pseudodifferential operators $P_\alpha$ on
$M_\alpha$ with the following property:
 \begin{multline*} 
	P : H^s(M_0; E) \to H^{s-m}(M_0; F) \mbox{ is Fredholm}
        \ \ \Leftrightarrow \ \ P \mbox{ is elliptic and }\\
	\ P_{\alpha} { : H^s(M_\alpha; E) \to H^{s-m}(M_\alpha; F)}\
	\mbox{ is invertible for all } \alpha \in I \,.
\end{multline*}
\end{theorem}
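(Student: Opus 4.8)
The plan is to translate the Fredholm problem into an invertibility problem in a groupoid \Cstara{} and then to invoke the Fredholm-groupoid machinery recorded above. First I would realize $M_0$ with its amenable ends as the interior of the unit space of a Lie groupoid $\maG \rightrightarrows M$ whose Lie algebroid $\maA$ encodes the compatible geometry, arranged so that the operators ``compatible with the geometry'' are exactly the elements of the pseudodifferential calculus $\Psi^m(\maG)$ and so that the vector (regular) representation $\pi_0$ at a unit $x_0 \in M_0$ is the given action on $H^s(M_0;E)$. Here $M_0 \subset M$ is open, dense and $\maG$-invariant with $\maG|_{M_0}$ the pair groupoid (twisted by $E$), so that $\pi_0$ identifies $\Cs{\maG|_{M_0}}$ with the ideal $\maK$ of compact operators on $L^2(M_0;E)$. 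After reducing orders by an elliptic isomorphism it is enough to treat $m=0$ and $s=0$, and then $P$ is Fredholm precisely when its class in $\overline{\Psi^0(\maG)}/\maK$ is invertible, where $\overline{\Psi^0(\maG)}$ denotes the \Cstar-closure of the order-zero calculus.

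Next I would write down the relevant family of representations of $\overline{\Psi^0(\maG)}$. On one hand the principal symbol gives $\sigma_0 \colon \overline{\Psi^0(\maG)} \to C(S^*\maA)$, continuous functions on the cosphere bundle of $\maA$, with $\Ker \sigma_0 = \Cs{\maG}$; invertibility of $\sigma_0(P)$ is exactly ellipticity. On the other hand, each orbit $\maO_\alpha \subset \pa M \ede M \setminus M_0$ provides a regular representation $\pi_{x_\alpha}$ attached to a unit $x_\alpha \in \maO_\alpha$: it restricts $P$ to the source fibre $M_\alpha \ede \maG_{x_\alpha}$, on which the isotropy group $G_\alpha \ede \maG_{x_\alpha}^{x_\alpha}$ acts, and represents it as the $G_\alpha$-invariant operator $P_\alpha \ede \pi_{x_\alpha}(P)$. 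These $P_\alpha$ are the limit operators of the statement. The point of the construction is that the common kernel of the family $\maF \ede \{\sigma_0\} \cup \{\pi_{x_\alpha}\}_{\alpha \in I}$ on $\overline{\Psi^0(\maG)}$ is exactly $\maK$, because, within $\Cs{\maG}$, the boundary regular representations have common kernel precisely $\Cs{\maG|_{M_0}} = \maK$.

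With this set-up, the theorem is equivalent to the assertion that $\maF$ is a \emph{strictly spectral} family, that is, $P$ is invertible modulo $\maK$ if and only if $\sigma_0(P)$ and every $P_\alpha$ are invertible. The symbol half is the usual parametrix construction: ellipticity yields $Q \in \Psi^{-m}(\maG)$ with $PQ - 1, QP - 1 \in \Cs{\maG}$, reducing matters to invertibility of the image of $P$ in
\[
\Cs{\maG}/\maK \;\cong\; \Cs{\maG|_{\pa M}} ;
\]
denote by $\rho$ the induced restriction-to-the-boundary homomorphism, through which the $\pi_{x_\alpha}$ factor. For the boundary half I would apply the results quoted from the abstract: $\maG$, and hence $\maG|_{\pa M}$, is a \ssub\ groupoid, so it has Exel's property, meaning its regular representations $\{\pi_{x_\alpha}\}$ are \emph{exhaustive}; and since the isotropy groups $G_\alpha$ are amenable one has $\rCs{\maG|_{\pa M}} = \Cs{\maG|_{\pa M}}$, which upgrades exhaustiveness to strict spectrality, so that $\rho(P)$ is invertible iff all the $P_\alpha$ are. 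Splicing the symbol reduction with the strict spectrality of the boundary representations gives precisely the stated equivalence.

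The main obstacle is this last step: showing that ``invertible in every boundary regular representation'' implies ``invertible in the quotient \Cstara.'' Exhaustiveness (Exel's property) is the geometric input that the boundary orbits organize into a suitable stratification and that no primitive ideal is missed; but passing from this to strict spectrality requires that no spectrum be lost between the reduced and the full completions, and this is exactly where amenability of the isotropy $G_\alpha$ is indispensable --- without it $\rCs{\maG|_{\pa M}}$ and $\Cs{\maG|_{\pa M}}$ may differ and the family of limit operators could fail to detect invertibility. That genuinely non-compact ends must be allowed is already visible from the failure of the classical Fredholm criterion on $\RR^n$ noted in the introduction. A secondary, routine point is the independence of the criterion from the Sobolev order $s$, which follows because the order-reducing elliptic isomorphisms intertwine all the representations in $\maF$.
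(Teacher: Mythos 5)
Your proposal is correct and follows essentially the same route as the paper: reduce to a bounded, order-zero problem by conjugating with order reductions (Proposition \ref{prop.nonclassical2}), then run the Fredholm-groupoid machinery --- Atkinson plus the symbol exact sequence \eqref{main-seq} for the ellipticity part, and exhaustiveness of the boundary regular representations (Exel's strong property for \ssub\ groupoids with amenable isotropy, Theorem \ref{thm.lemma.Exel}) upgraded to strict spectrality via metric amenability and separability --- exactly as in Theorems \ref{thm.main.converse}, \ref{thm.nonclassical2} and \ref{thm.main.Fredholm} with the orbit data of Remark \ref{rem.explicit1}. The only cosmetic difference is that you fold the principal symbol into your family $\maF$ of representations, whereas the paper keeps ellipticity separate as invertibility in the quotient $\mathbfPsi/M_n(C^*_r(\maG))$ and handles the splice through Corollary \ref{cor.ideal}.
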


Many results of this kind were obtained for psedudodifferential
operators before. See for instance \cite{DLR, MantoiuReine,
  MelrosePiazza, MeloNestSchrohe, SchSch1, SchulzeBook91} and the
references therein for a very small sample.  Similar results for {\em
  differential} operators were also obtained earlier, see
\cite{daugeBook, Kondratiev67, KMR, NP, PlamenevskiBook} and the
references therein, again for a very small sample.  Other related
results appeared in the context of localization principles, often in
relation to the $N$-body problem. See \cite{DamakGeorgescu, GI2,
  GeorgescuNistor2, Mantoiu2, Mantoiu1, RochBookLimit, RochBookNGT}
and the references therein.  Inspired by these works, we shall call
the operators $P_\alpha$ {\em limit operators} (of $P$). We shall
refer to results similar to Theorem \ref{thm.nonclassical} as {\em
  non-local Fredholm conditions}.

The class of ``manifolds with amenable ends'' is introduced such that,
almost by definition, it is close to being the largest class of
manifolds for which Theorem \ref{thm.nonclassical} is valid.  The
challenge then becomes to provide a large enough class of manifolds
with amenable ends, which we do in this paper.  The term ``amenable''
comes from the fact that certain isotropy groups at infinity must be
amenable.  In many cases, the manifolds $M_\alpha$ are obtained from
the orbits of certain natural vector fields acting on a
compactification $M$ of $M_0$ and from their isotropies. This is the
case for manifolds with cylindrical and poly-cylindrical ends, for
manifolds that are asymptotically Euclidean, and for manifolds that
are asymptotically hyperbolic, which are all particular cases of
manifolds with amenable ends. See the last section of the paper
for explicit statements covering in detail the case of these classes
of manifolds.  It is interesting to notice that the case of
asymptotically hyperbolic manifolds leads to the study of certain
operator on solvable Lie groups.

Theorem \ref{thm.nonclassical} is almost a consequence of the results
in \cite{LMN1, LNGeometric}, but the results of those papers turned
out to be difficult to use by non-specialists. We have thus tried in
the last section of the paper to provide a presentation such that
the reader can {\em understand the main results without any knowledge
  of groupoids or $\Cstar$-algebras.}

\subsection{Exhaustive families of representations}
It was realized by some of the authors mentioned above that the
non-local Fredholm conditions of Theorem \ref{thm.nonclassical} are
related to the representation theory of certain $C^{\ast}$-algebras
$\mathfrak{A}$. See also \cite{CordesBook, CordesHerman68,
  PlamenevskiBook,TaylorGlf} and the references therein.  These
$C\sp{\ast}$-algebras are such that they contain the operators of the
form $a = (1 + \Delta)\sp{(s-m)/2} D (1 + \Delta)\sp{-s/2}$ (where
$\Delta$ is the positive Laplacian) acting on $L\sp{2}(M_0)$. The
reason for considering the operator $a$ is that $D$ is Fredholm if,
and only if, $a$ is Fredholm; moreover, $a$ is bounded.  In most
practical applications, these $C^{\ast}$-algebras can be chosen to be
groupoid $C^{\ast}$-algebras. These ideas are used in the proof of
Theorem \ref{thm.nonclassical2}, which contains Theorem
\ref{thm.nonclassical} as a particular case.  The operators
$P_{\alpha}$ in Theorem \ref{thm.nonclassical} then can be obtained as
homomorphic images of the operator $P$, that is $P_\alpha=
\pi_\alpha(P)$.

The relevant families of representations in this setting are those of
\emph{strictly spectral (and strictly norming)} families of
representations, introduced by Roch in \cite{Roch} and recently
studied in detail by Nistor and Prudhon in \cite{nistorPrudhon}. In
that paper, the concept of an \emph{exhaustive family} of
representations has emerged as a useful concept to study strictly
spectral families of representations. From a technical point of view,
these families of representations form the backbone of the theoretical
results in this paper.

\subsection{Fredholm Lie groupoids}
The key point of our approach is to start with a general study of
Fredholm conditions for pseudodifferential operators in the framework
of Fredholm Lie groupoids.  A {\em Fredholm Lie groupoid} $\maG$ is,
by definition, a Lie groupoid for which the Fredholm property of its
associated operators is equivalent to the invertibility of the
principal symbol and of its fiberwise boundary restrictions. More
precisely, if $M$ is the set of units of $\maG$ and if $\maG$ is the
pair groupoid above $M_0 := M \smallsetminus \pa M$, we have that
$\maG$ is a {\em Fredholm Lie groupoid} if it has the following
property:
\begin{equation}\label{eq.main.cond}
 \mbox{``$a \in 1 + \Cstar_r(\maG)$ is
   Fredholm\ $\Leftrightarrow$\ $\pi_x(a)$ is invertible for all $x
   \in \pa M$.''}
\end{equation}
The reader will recognize in condition \eqref{eq.main.cond} the type
of Fredholm conditions that appeared in Theorem \ref{thm.nonclassical}
(and which are typically used in practice). It turns out that if the
defining condition \eqref{eq.main.cond} is satisfied (for all $a \in 1
+ \Cstar_r(\maG)$) then it will be satisfied for many other operators
associated to $\maG$, in particular it will be satisfied for $a =P \in
\Psi^m(\maG; E, F)$, a pseudodifferential operators on $\maG$ acting
between the sections of the vector bundles $E, F \to M$ (however, for
pseudodifferential operators, one has to add the ellipticity
condition).

The extension of the Fredholm conditions in Equation
\eqref{eq.main.cond} from operators in $1 + \Cstar_r(\maG)$ to
operators in $\Psi^m(\maG; E, F)$ turns out to be almost automatic and
is based on the use of strictly spectral and exhaustive families of
representations of groupoid $\Cstar$-algebras (see Section
\ref{sec.exhaustive} for definitions and references).  Thus, in our
paper, we shift the study of the Fredholm conditions from the study of
a {\em single} operator to the study of suitable {\em algebras}
containing it.  In particular, it will be enough to study the
properties and the representations of {\em regularizing}
pseudodifferential operators. The Fredholm properties of higher order
pseudodifferential operators will then follow simply by including the
ellipticity condition.  Thus the Fredholm conditions established in
this paper will be formulated in terms of representations of groupoid
$C\sp{\ast}$-algebras.

We give various characterizations of Fredholm groupoids and provide
methods to prove that individual groupoids are Fredholm. We introduce
the class of \ssub\ groupoids, which is built out by glueing fibered
pull-backs of bundles of Lie groups. We show that a \ssub\ groupoid
with amenable stabilizers is Fredholm 
and hence that it yields a manifold with amenable ends.

\subsection{Examples and applications}

The main significance of the class of \ssub\ Lie groupoids is that
many of the groupoids that appear in practice exhibit this structure
naturally.  Typically, using the notations as above, we have that $M$
identifies with a compactification of $M_0$ to a manifold with
corners, where the behavior at $F=M\setminus M_0$ models the behavior
at infinity (`at the ends'). Often the corresponding isotropy groups
are amenable.  A class of manifolds that fits this perspective is the
class of Lie manifolds \cite{LMN1, LNGeometric}, a few examples of
which appear in Section \ref{sec.ex}.

Moreover, \ssub\ Lie groupoids are also tailored to applications to
singular spaces obtained by (iterative) desingularization procedures,
desingularization being the analog in the category of groupoids of the
blow-up construction in the category of manifolds with corners.  In
fact, it can be seen that desingularization preserves Fredholm
groupoids, and that the class of \ssub\ groupoids is closed under
desingularization.  While we do not pursue this approach here in full
generality, we outline the key ideas involved and give the
construction of such a groupoid yielding Fredholm conditions for the
edge calculus.

Our results can be extended to products of such manifolds, or to
manifolds that locally at infinity are products of such manifolds.

\subsection{Contents of the paper}
The paper consists of roughly two parts: the theoretical part and 
applications. The applications are included in the last
section, which we tried to write in such a way that they can,
to a large extent, be read independently of the rest of the paper. 
The reader interested only in applications can thus start 
immediately with Section \ref{sec.ex}. At least the main results 
of that section should be understandable {\em without any 
knowledge of groupoids.} 

We now describe in detail the main contents of the paper. We start with
reviewing the relevant topics related to groupoids and groupoid
$C^*$-algebras, so Section \ref{sec.LgLa} is mostly background
material on locally compact groupoids $\maG$, on their Haar systems,
and their $C^*$-algebras. We then review manifolds with corners and
tame submersions, and we recall the definitions of Lie groupoids in
the framework that we need, that is, that of manifolds with
corners. Note that all our Lie groupoids will be second countable and
Hausdorff. We finish this section with a review of some examples of
Lie groupoids that will play a role in what follows.

Section \ref{sec.exhaustive} contains preliminaries on exhaustive
families of representations from \cite{nistorPrudhon} and some general
results on groupoid $C\sp{\ast}$-algebras. We recall the notions of
strictly spectral and strictly norming families \cite{Roch} and the
main results from \cite{nistorPrudhon}. We then introduce groupoids
with \emph{ Exel's property}, respectively, \emph{strong Exel's
  property}, when the induced family of regular representations is
exhaustive for the reduced $C^*$-algebra, respectively, for the full
$C^*$-algebra. We prove that groupoids given by fibered pull-backs of
bundles of amenable Lie groups always have Exel's strong property, and
we introduce the class of \emph{\ssub\ groupoids}, given essentially
by glueing fibered pull-backs of bundles of Lie groups. Our main
result here is that \ssub\ Lie groupoids with amenable isotropy groups
always have Exel's strong property.

In Section \ref{sec4}, we define Fredholm Lie groupoids. Note that in
the following sections, we will work always in the setting of Lie
groupoids.  We provide a characterization of Fredholm Lie groupoids
using strictly spectral families of regular representations and show
that groupoids that have the strong Exel's property are Fredholm. It
follows that \ssub\ Lie groupoids with amenable isotropy groups are
Fredholm. We then specialize to algebras of pseudodifferential
operators on Lie groupoids and obtain the crucial Theorems
\ref{thm.nonclassical2} and \ref{thm.main.Fredholm}.

The last section of the paper, Section \ref{sec.ex}, contains examples 
and applications of our results, namely  of Theorem \ref{thm.main.Fredholm}. 
We start with group actions, define the associated transformation groupoid, 
and use this construction 
to describe Fredholm conditions related to several pseudodifferential
calculi: the $b$-groupoid that models operators on manifolds with
cylindrical and poly-cylindrical ends; the scattering groupoid that
models operators on manifolds that are asymptotically Euclidean, and
also operators on asymptotically hyperbolic spaces.  We then consider
the edge calculus and construct a suitable Fredholm \ssub \ groupoid
that will recover Fredholm conditions. This is a
particular case of a desingularization groupoid, whose construction is
outlined in the following subsection. We then give
more explicit examples of the desingularization and blow-up process. A
first example deals with the blow-up of a smooth submanifold along
another smooth, compact manifold.  The second example extends this
construction to manifolds with boundary.  Finally, the last example
deals with the iterated blow-up of a singular stratified subset of
dimension one. See the main body of the paper for specific references
to the existing literature.
  
For simplicity, in view of the applications considered in this paper,
we shall work almost exclusively with {\em Lie groupoids} (and their
reductions), although some definitions and results are valid in
greater generality. For the most part, our manifolds will be Hausdorff
and second countable.

\section{Groupoids and their $C^*$-algebras}
\label{sec.LgLa}
We recall in this section some basic definitions and properties of
groupoids.  Although our interest in applications lies mainly in Lie
groupoids, we have found it convenient to consider also the general
case of locally compact groupoids, so we shall discuss these two cases
in parallel.  We refer to Mackenzie's books \cite{MackenzieBook1,
  MackenzieBook2} for more details and, in general, for a nice
introduction to the subject of Lie groups and Lie groupoids, as well
as to further references and historical comments. See also
\cite{buneciSurvey, MoerdijkFolBook, renaultBook, WilliamsBook} for
the more specialized issues relating to analytic applications.  Most
of the needed results can be found also in \cite{nistorDesing}, whose
approach we also use here.

\subsection{Locally compact  groupoids }
\label{ssec.dLgLa}
Let us introduce groupoids as in
\cite{buneciSurvey,MackenzieBook2,renaultBook}.

\begin{definition} A {\em groupoid} is a pair $\maG = (\maG^{(1)}, \maG^{(0)})$,
where $\maG^{(i)}$, $i=0,1$, are sets, together with structural
morphisms
\begin{enumerate}[(1)]
 \item $d, r: \maG^{(1)} \to \maG^{(0)}$ (the ``domain'' and
   ``range''),
 \item $\iota : \maG^{(1)} \to \maG^{(1)}$ (the ``inverse''),
 \item $u : \maG^{(0)} \to \maG^{(1)}$ (the inclusion of units), and
 \item $\mu : \maG^{(2)} := \{(g, h) \in \maG^{(1)} \times \maG^{(1)}
   \vert\, d(g) = r(h) \} \to \maG^{(1)}$ (the product),
\end{enumerate}
with the following properties (we write $gh := \mu(g,h)$, for simplicity):
\begin{enumerate}[(1)]
 \item $d(gh) = d(h)$, $r(gh) = r(g)$ if $d(g) = r(h)$;
 \item $g_1(g_2 g_3) = (g_1 g_2) g_3$ for all $g_i \in \maG$ such that
   $d(g_i) = r(g_{i+1})$;
 \item $d(u(x)) = x = r(u(x))$, for all $x \in M$.
 \item $g u(d(g)) = g$ and $u(r(g)) g = g$ for all $g \in \maG$.
 \item $g \iota(g) = u(r(g))$ and $\iota(g) g = u(d(g))$ for all $g
   \in \maG$.
\end{enumerate}
\end{definition}

Moreover, we see that $\maG^{(0)}$ can be regarded as the set of
objects of a category with morphisms $\maG^{(1)}$. The objects of
$\maG$ will also be called {\em units} and the morphisms of $\maG$
will also be called {\em arrows}. Actually, {\em a groupoid} $\maG$ is
simply a small category in which every morphism is invertible.  (A
{\em small category} is one whose objects form a set.)  For
convenience, we shall identify $\maG = \maG\sp{(1)}$ and denote $M :=
\maG\sp{(0)}$. We shall write $\maG \tto M$ for a groupoid $\maG$ with
units $M$. {In this paper, $\maG$ always will denote a groupoid.}

\begin{definition}\label{def.lc.gr}
 A {\em locally compact groupoid} is a groupoid $\maG \tto M$ such
 that:
\begin{enumerate}[(1)]
  \item $\maG$ and $M$ are locally compact spaces, with $M$ Hausdorff;
  \item the structural morphisms $d, r, \iota, u,$ and $\mu$ are continuous;
  \item $d$ is surjective and open.
\end{enumerate}
\end{definition}

See Subsection \ref{ssec.ex.gr} for examples of groupoids.  Let us
notice that the definition requires only the space of units $M$ to be
Hausdorff. In this paper, however, all spaces will be assumed or
proved to be Hausdorff.

In what follows, we denote by $\maG_A := d\sp{-1}(A)$, $\maG^A=
r^{-1}(A)$ and $\maG_A\sp{B} := d\sp{-1}(A) \cap r\sp{-1}(B)$.  We
call $\maG_A\sp{A}$ the \emph{reduction} of $\maG$ to $A$. If $A$ is
\emph{$\maG$-invariant}, in the sense that $\maG_A\sp{A}=\maG_{A} =
\maG\sp{A} = r\sp{-1}(A)$, then $\maG_A$ is also a groupoid, {called
  the {\em restriction} of $\maG$ to $A$.} For any $x\in X$, $\maG_x^x
= d\sp{-1}(x) \cap r\sp{-1}(x)$ is a group, called the \emph{isotropy
  group} at $x$.

We will see several examples of groupoids in more detail when we
introduce Lie groupoids.

\subsection{Haar systems and $C\sp{\ast}$-algebras\label{ssec.Haar}}
We now recall the definition of a Haar system of a locally compact
groupoid and we use this opportunity to fix some more notations to be
used throughout the rest of the paper. We refer to \cite{buneciSurvey,
  ionescuWilliamsEHC, renaultBook} for more information on the topics
discussed in this subsection.

If $\maG$ is a locally compact groupoid, we shall denote by
$\maC_c(\maG)$ the space of continuous, complex valued, compactly
supported functions on $\maG$.

\begin{definition}
A {\em right Haar system} for a locally compact groupoid $\maG$ is a
family $\lambda =\{\lambda_x\}_{x\in M}$, where $\lambda_x$ is a Borel
regular measure on $\maG$ with support $\supp (\lambda_x) = d^{-1}(x)
=: \maG_{x}$ for every $x\in M = \maG\sp{(0)}$, satisfying
\begin{enumerate}[(i)]
 \item {\em The continuity condition:}
\begin{equation*}
 M \ni x \, \mapsto\, \lambda_{x}(\varphi) \, := \, \int_{\maG_x}\,
 \varphi(g) \de \lambda_x(g) \, \in \, \CC
\end{equation*}
is continuous for $\varphi\in\maC_c(\maG)$.
 \item  {\em The invariance condition:}
\begin{equation*}
 \int_{\maG_{r(g)}}\, \varphi(hg)\de\lambda_{r(g)}(h) \seq
 \int_{\maG_{d(g)}}\, \varphi(h)\de \lambda_{d(g)}(h)
\end{equation*}
for all $g\in\maG$ and $\varphi\in\maC_c(\maG)$.
\end{enumerate}
\end{definition}

We shall assume from now on that all our locally compact groupoids are
endowed with a (right) Haar system. Let thus $\maG$ be a locally
compact groupoid and $\{\lambda_x\}$ be the Haar system associated to
it. The space $\maC_c(\maG)$ has a natural product given by the
formula
\begin{equation*}
  (\varphi_1\ast \varphi_2)(g) \ede
  \int\limits_{d^{-1}(d(g))}\varphi_1(gh\sp{-1})
  \varphi_2(h)\de\lambda_{d(g)}(h) \,.
\end{equation*}
This makes $\maC_c(\maG)$ into an associative $*$-algebra with the
involution defined by
\begin{equation*}
 \varphi^*(g) \ede \overline{\varphi(g^{-1})}
\end{equation*}
for all $g\in\maG$ and $\varphi\in\maC_c(\maG)$. There also exists a
natural algebra norm on $\maC_c(\maG)$ defined by
\begin{equation*}
  \| f\|_I \ede \max \, \Bigl\{ \, \sup_{x\in M}\int\vert
  \varphi\vert\de\lambda_x, \, \sup_{x\in M}\int\vert
  \varphi^*\vert\de\lambda_x \, \Bigr\}.
\end{equation*}
The completion of $\maC_c(\maG)$ with respect to this norm $\|\cdot
\|_I$ will be denoted by $L^1(\maG)$.

Recall \cite{Dixmier} that a {\em $C\sp{\ast}$-algebra} is a complex
algebra $A$ together with a conjugate linear involution $*$ and a
complete norm $\| \ \|$ such that $(ab)^* = b^* a^*$, $\|ab\| \le
\|a\| \|b\|$, and $\|a^*a\| = \|a\|^2$, for all $a, b \in A$. Let
$\maH$ be a Hilbert space and denote by $\maL(\maH)$ the space of
linear, bounded operators on $\maH$. Then $\maL(\maH)$ is a
$C^*$-algebra.  A {\em representation} of a $C\sp{\ast}$-algebra $A$
on the Hilbert space $\maH_\pi$ is a $*$-morphism $\pi : A \to
\maL(\maH_\pi)$.

To a locally compact groupoid $\maG$ (endowed with a Haar system),
there are associated two basic $C^*$-algebras, the {\em full} and {\em
  reduced} $C^*$-algebras $\Cs{\maG}$ and $\rCs{\maG}$, whose
definition we now recall.

\begin{definition}\label{def.regular}
The (full) $C\sp{\ast}$-algebra associated to $\maG$, denoted
$C\sp{\ast}(\maG)$, is defined as the completion of $\maC_c(\maG)$
with respect to the norm
\begin{equation*}
  \| \varphi\| \ede \sup\limits_\pi\|\pi(\varphi)\| \,,
\end{equation*}
where $\pi$ ranges over all {\em contractive} $*$-representations of
$\maC_c(\maG)$.  Let us define as usual for any $x\in M$ the
\emph{regular} representation $\pi_x\, \colon\, C\sp{\ast}(\maG) \to
\maL(L^2(\maG_x,\lambda_x))$ by the formula
\begin{equation*}
  \pi_x(\varphi)\psi(g) \ede \varphi * \psi(g) \ede \int_{\maG_{d(g)}}
  \varphi(gh^{-1}) \psi(h) d\lambda_{d(g)}(h) \,, \quad \phi \in
  \maC_c(\maG) \,.
\end{equation*}
We then define similarly the reduced $C\sp{\ast}$-algebra
$C\sp{\ast}_{r}(\maG)$ as the completion of $\maC_c(\maG)$ with
respect to the norm
\begin{equation*}
  \| \varphi\|_r \ede \sup\limits_{x \in M}\|\pi_x(\varphi)\| \,.
\end{equation*}
The groupoid~$\maG$ is said to be \emph{metrically amenable} if the
canonical surjective $*$-homomorphism $\Cstar(\maG) \to
\Cstar_{r}(\maG)$, induced by the definitions above, is also
injective.
\end{definition}

Also, for further use, we note that if $\maG$ is second countable,
then $C^{\ast}(\maG)$ is a separable $C^{\ast}$-algebra.

\begin{remark}
\normalfont For any $\maG$-invariant, locally closed subset
$A\subseteq M$, the \emph{reduced} groupoid $\maG_A = \maG_A^A$ is
locally compact and has a Haar system $\lambda_A$ obtained by
restricting the Haar system $\lambda$ of $\maG$ to~$\maG_A$. In
particular, we can construct as above the corresponding
$C\sp{\ast}$-algebra $C\sp{\ast}(\maG_A)$ and the reduced
$C\sp{\ast}$-algebra $C\sp{\ast}_{r}(\maG_A)$.  For any closed subset
$A\subseteq M$, the subset $d^{-1}(A)\subseteq\maG$ is also closed, so
the restriction map $\maC_c(\maG)\to \maC_c(d^{-1}(A))$ is well
defined.  If $A$ is also $\maG$-invariant then the restriction extends
by continuity to both a $*$-homomorphism $\rho_A\colon
C\sp{\ast}(\maG)\to C\sp{\ast}(\maG_A)$ and a $*$-homomorphism
$(\rho_A)_{r}\colon C\sp{\ast}_{r}(\maG)\to C\sp{\ast}_{r}(\maG_A)$.
\end{remark}

We have the following well known, but important result
\cite{MRW87, MRW96, renault91} that we record for further reference.

\begin{proposition}\label{renault.exact}
Let $\maG \tto M$ be a second countable, locally compact groupoid with
a Haar system. Let $U \subset M$ be an open $\maG$-invariant subset,
$F := M \smallsetminus U$.
\begin{enumerate}[(i)]
\item\label{renault.exact_item1} $C\sp{\ast}(\maG_U)$ is a closed
  two-sided ideal of $C\sp{\ast}(\maG)$ that yields the short exact
  sequence
\begin{equation*}
  0\to C\sp{\ast}(\maG_U) \to
  C\sp{\ast}(\maG)\mathop{\longrightarrow}\limits^{\rho_F}
  C\sp{\ast}(\maG_{ F })\to 0 \,.
\end{equation*}

\item\label{renault.exact_item3} If $\maG_F$ is metrically
  a\-me\-nable, then one has the exact sequence
\begin{equation*}
  0\to C\sp{\ast}_{r}(\maG_U)\to C\sp{\ast}_{r}(\maG)
  \mathop{\xrightarrow{\hspace*{1cm}}}\limits^{(\rho_F)_{r}}
  C\sp{\ast}_{r}(\maG_F)\to 0 \,.
\end{equation*}

\item If the groupoids $\maG_F$ and $\maG_U$ (respectively, $\maG$)
  are metrically amenable, then $\maG$ (respectively, $\maG_U$) is
  also metrically amenable.
\end{enumerate}
\end{proposition}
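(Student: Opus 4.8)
The plan is to exploit the short exact sequences from parts \eqref{renault.exact_item1} and \eqref{renault.exact_item3}, and to reduce the statement about metric amenability to the standard fact that amenability (in the sense of injectivity of the canonical map $\Cstar(\maG)\to\Cstar_r(\maG)$) is a ``two-out-of-three'' property with respect to ideals and quotients. The key structural input is the commuting diagram relating the full and reduced exact sequences via the canonical surjections $\Lambda_? \colon \Cstar(\maG_?) \to \Cstar_r(\maG_?)$ for the subgroupoids $? \in \{U, F\}$ and for $\maG$ itself. By part \eqref{renault.exact_item1}, the top row is always exact; for the bottom row to be exact one needs $\maG_F$ metrically amenable, which is exactly what part \eqref{renault.exact_item3} supplies.

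First I would set up the diagram whose top row is the full sequence from \eqref{renault.exact_item1}, whose bottom row is the reduced sequence, and whose vertical maps are the canonical surjections $\Lambda_U$, $\Lambda_{\maG}$, and $\Lambda_F$. Metric amenability of $\maG$ is precisely the statement that $\Lambda_{\maG}$ is injective, and similarly for $\maG_U$ and $\maG_F$. So the whole proposition becomes a diagram chase: given injectivity of two of the three vertical arrows (plus exactness of both rows, which requires $\maG_F$ amenable to guarantee the bottom row is exact), deduce injectivity of the third. Concretely, for the first assertion, if $\Lambda_F$ and $\Lambda_U$ are injective I want to conclude $\Lambda_{\maG}$ is injective; for the second assertion, if $\Lambda_{\maG}$ is injective (and $\maG_F$ amenable, so the bottom row is exact) I want $\Lambda_U$ injective.

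The diagram chase itself is routine once exactness of both rows is in hand. For the first assertion: take $a \in \ker \Lambda_{\maG}$. Its image in $\Cstar_r(\maG_F)$ is $\Lambda_F(\rho_F(a))$, and since $(\rho_F)_r \circ \Lambda_{\maG} = \Lambda_F \circ \rho_F$ by naturality, this vanishes; injectivity of $\Lambda_F$ gives $\rho_F(a)=0$, so by exactness of the top row $a \in \Cstar(\maG_U)$, and then injectivity of $\Lambda_U$ forces $a=0$. The second assertion is the dual chase using the quotient: one checks that an element in $\ker\Lambda_U$ maps to zero in $\Cstar_r(\maG)$ via the inclusion $\Cstar_r(\maG_U)\hookrightarrow\Cstar_r(\maG)$ (which is injective by exactness of the bottom row, for which $\maG_F$ amenable is needed), commutes with $\Lambda_{\maG}$ through the inclusion $\Cstar(\maG_U)\hookrightarrow\Cstar(\maG)$, and invoke injectivity of $\Lambda_{\maG}$.

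The main obstacle is not the algebra but ensuring the diagram is honestly commutative and that both rows are genuinely exact in the relevant cases. In particular, the bottom (reduced) row is \emph{not} automatically exact; its exactness at the quotient end is precisely part \eqref{renault.exact_item3}, valid only when $\maG_F$ is metrically amenable, and injectivity of $\Cstar_r(\maG_U)\to\Cstar_r(\maG)$ must be taken from that same exact sequence. So I would be careful to cite \eqref{renault.exact_item3} exactly where needed and to verify that the canonical maps $\Lambda_?$ intertwine the restriction homomorphisms $\rho_F$, $(\rho_F)_r$ and the ideal inclusions; this naturality is what makes the squares commute. Once that bookkeeping is settled, both statements follow from the five-lemma-style chase described above.
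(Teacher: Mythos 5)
Your proposal is correct and takes essentially the same route as the paper: the paper also treats (i) and (ii) as known results (citing Muhly--Renault--Williams and Renault) and proves (iii) with the single remark that it ``follows from (ii) and the Five Lemma,'' which is precisely the commutative-diagram chase you spell out. The only difference is presentational --- you write out the chase explicitly (correctly flagging where exactness of the reduced row, hence metric amenability of $\maG_F$, is used) instead of invoking the Five Lemma by name.
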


\begin{proof}
The first assertion is well known, see for instance
\cite[Lemma~2.10]{MRW96}.  The second statement is in \cite[Remark
  4.10]{renault91}. The last part follows from (ii) and the Five
Lemma.
\end{proof}

\begin{remark}\label{rem.disjoint}
We notice that the exact sequence of Proposition \ref{renault.exact}
corresponds to a disjoint union decomposition $\maG = \maG_F \sqcup
\maG_U.$
\end{remark}

\subsection{Manifolds with corners and  Lie groupoids}\label{ssec.mc}
Even if one is interested only in analysis on smooth manifolds (that
is, in manifolds without corners or boundary), one important class of
applications will be to spaces obtained as the blow-up with respect to
suitable submanifolds, which leads to manifolds with corners, as each
blow-up increases the highest codimension of corners by one. To model
the analysis on these spaces, we need Lie groupoids. The advantage of
using Lie groupoids is that they have a distinguished class of Haar
systems defined using the smooth structure.

We begin with some background material following \cite{nistorDesing}.
It will be important to distinguish between smooth manifolds without
corners (or boundaries) and (smooth) manifolds with corners. The
former will be {\em smooth manifolds}, while the later will be simply
{\em manifolds}. Thus, in this paper, a {\em manifold} $M$ is a {\em
  second countable, Hausdorff} topological space locally modeled by
open subsets of $[-1, 1]\sp{n}$ with smooth coordinate changes. In
particular, our manifolds may have corners. By contrast, a {\em smooth
  manifold} will not have corners (or boundary).  A point $p \in M$ of
a manifold (with corners) is called of {\em depth} $k$ if it has a
neighborhood $V_p$ diffeomorphic to $[0, a)^{k} \times (-a, a)^{n-k}$,
  $a > 0$, by a diffeomorphism $\phi_p : V_p \to [0, a)^{k} \times
    (-a, a)^{n-k}$ with $\phi_p(p) = 0$.

The set of {\em inward pointing tangent vectors} in $v \in T_{x}(M)$
define a closed cone denoted $T_{x}\sp{+}(M)$.  A function $f : M \to
M_1$ between two manifolds with corners will be called {\em smooth} if
its components are smooth in all coordinate charts. A little bit of
extra care is needed here in defining the derivatives. This is
illustrated clearly in one dimension: if $f : [0, 1] \to \RR$, then
$f'(0) := \lim_{h \to 0, t>0} h\sp{-1} (f (h) - f(0))$, whereas $f'(1)
:= \lim_{h \to 0, t>0} h\sp{-1} (f (1) - f(1-h))$. Since $\lim_{h \to
  0} h\sp{-1} (f (x + h) - f(x)) = \lim_{h \to 0} h\sp{-1} (f (x) -
f(x-h)) =: f'(x)$, for $0 < x < 1$, this defines unambiguously
$f'(x)$. A similar comment is in order in higher dimensions as
well. In local coordinates, a smooth function $[0, 1]^n \to [0, 1]^k$
is one that is the restriction of a smooth function $\RR^n \to \RR^k$.

Let $M$ and $M_1$ be manifolds with corners and $f : M_1 \to M$ be a
smooth map. Then $f$ induces a vector bundle map $df : TM_1 \to TM$
such that $df(T_z\sp{+}(M_1)) \subset T_{f(z)}\sp{+}M$. If the smooth
map $f : M_1 \to M$ is injective, has injective differential $df$, and
has {\em locally closed range,} then we say that $f(M_1)$ is a {\em
  submanifold} of $M$. We stress the condition that submanifolds be
locally closed (that is, the intersection of an open and a closed
subset). Other than this conditions, our concept of submanifold is the
most general possible.

\begin{definition}\label{def.tame.submersion}
A {\em tame submersion} $h$ between two manifolds with corners $M_1$
and $M$ is a smooth map $h : M_1 \to M$ such that its differential
$dh$ is surjective everywhere and
\begin{equation*}
 (dh_x)\sp{-1} (T_{h(x)}\sp{+}M) \, = \, T_{x}\sp{+}M_1 \,.
\end{equation*}
(That is, $dh(v)$ is an inward pointing vector of $M$ if, and only if,
$v$ is an inward pointing vector of $M_1$.)
\end{definition}

Clearly, if $h : M_1 \to M$ is a tame submersion of manifolds with
corners, then $x$ and $h(x)$ will have the same depth. We have the
following well known lemma (see for instance \cite{nistorDesing}).

\begin{lemma}\label{lemma.corners}
Let $h : M_1 \to M$ be a tame submersion of manifolds with corners.
\begin{enumerate}[(i)]
 \item For $m_1 \in M_1$, there exists an open neighborhood $U$ of
   $m_1$ in $M_1$ such that $h(U)$ is open and the restriction of $h$
   to $U$ is a $\CI$ fibration with basis $h(U)$.
 \item Let $L \subset M$ be a submanifold, then $L_1 := h\sp{-1}(L)$
   is a submanifold of $M_1$ of rank $\le$ the rank of $L$.
\end{enumerate}
\end{lemma}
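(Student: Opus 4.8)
The plan is to prove both parts locally, using the normal form for tame submersions that part (i) itself provides, so the two parts are proved together with (i) bootstrapping (ii).

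\medskip

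\emph{Part (i).} The statement is local on $M_1$, so I would fix $m_1 \in M_1$ and reduce to coordinate charts. Let $n = \dim M_1$ and $q = \dim M$, and let $k$ be the common depth of $m_1$ and $h(m_1)$ (common because a tame submersion preserves depth, as noted just before the lemma). Choose a chart near $h(m_1)$ identifying a neighborhood with $[0,a)^{k} \times (-a,a)^{q-k}$, sending $h(m_1)$ to the origin. The key point is that the tameness condition $(dh_x)^{-1}(T^{+}_{h(x)}M) = T^{+}_x M_1$ forces the differential to respect the corner structure exactly: the inward directions of $M_1$ map isomorphically (modulo the kernel) onto the inward directions of $M$. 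First I would show that one can choose coordinates near $m_1$ of the form $[0,a)^{k} \times (-a,a)^{n-k}$ in which $h$ becomes the linear projection $(x_1,\dots,x_k, y_1,\dots,y_{n-k}) \mapsto (x_1,\dots,x_k, y_1,\dots,y_{q-k})$. The surjectivity of $dh$ gives, by the usual (corner-free) submersion theorem applied in the interior directions, a splitting of the $(-a,a)$ coordinates into base and fiber directions; the content of tameness is precisely that the first $k$ boundary-defining coordinates $x_1,\dots,x_k$ of $M_1$ can be taken to pull back the $k$ boundary-defining coordinates of $M$, with no extra boundary directions in the fiber. This is where I would spend the most care: verifying that tameness prevents a boundary hyperplane of $M_1$ from being ``crushed'' into the interior of $M$ or from creating a spurious boundary direction in the fiber. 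Once $h$ is this standard projection on $U = [0,a)^{k}\times(-a,a)^{n-k}$, the image $h(U) = [0,a)^{k}\times(-a,a)^{q-k}$ is open in $M$ and $h|_U$ is visibly a smooth (trivial) fibration with fiber $(-a,a)^{n-q}$.

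\medskip

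\emph{Part (ii).} With the local normal form in hand, part (ii) is essentially a computation in the standard projection. Let $L \subset M$ be a submanifold; I want to show $L_1 := h^{-1}(L)$ is a submanifold of $M_1$ with $\dim L_1 \le \dim L + (n-q)$, hence of rank no larger than that of $L$ in the sense the lemma intends (the fiber dimension $n-q$ being the excess). Working in a chart where $h$ is the projection $\mathbb{R}^{n} \to \mathbb{R}^{q}$ (with the corner coordinates matched as above), I would note that $h^{-1}(L)$ is locally $L \times (-a,a)^{n-q}$ in the fiber directions. Since $L$ is locally closed with injective inclusion and injective differential, the preimage inherits these properties: local closedness is preserved because $h$ is continuous and open (so preimages of locally closed sets, for an open map with the right fiber structure, are locally closed), injectivity of the inclusion of $L_1$ is immediate, and the differential $d(\text{incl})$ stays injective because it is the inclusion differential for $L$ crossed with the identity on the fiber. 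The rank bound follows directly from the product-with-fiber description. The one subtlety to check is the corner compatibility: because the matched coordinates send boundary-defining functions of $M$ to boundary-defining functions of $M_1$, the submanifold $L$ meeting the corners of $M$ transversally in the appropriate tame sense pulls back to a submanifold meeting the corners of $M_1$ correctly, so $L_1$ is genuinely a manifold with corners and not merely a closed subset.

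\medskip

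The main obstacle, and the heart of the proof, is establishing the local normal form of part (i) with correct treatment of the corners. In the interior this is the classical submersion theorem, but near a corner point one must produce an adapted chart in which both the surjectivity of $dh$ and the tameness identity $(dh_x)^{-1}(T^{+}_{h(x)}M) = T^{+}_x M_1$ hold simultaneously in linearized form. I would carry this out by first choosing boundary-defining coordinates on $M$, pulling them back via $h$ to functions on $M_1$, and arguing from tameness that their common zero loci define exactly the codimension-$k$ corner stratum through $m_1$ with independent differentials; then completing to a full coordinate system by adding interior submersion coordinates and fiber coordinates. Once this adapted chart exists, both conclusions reduce to transparent statements about the standard projection, so the remaining verifications are routine. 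Since the lemma is cited as well known with a reference to \cite{nistorDesing}, I would present this normal-form construction as the essential step and treat the consequences (i) and (ii) as immediate corollaries of it.
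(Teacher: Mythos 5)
The paper never proves this lemma: it is quoted as well known, with a pointer to \cite{nistorDesing}, so there is no in-paper argument to compare yours against. Your normal-form strategy is the standard one and is correct in outline: both parts reduce to an adapted chart in which $h$ becomes the projection $[0,a)^k\times(-a,a)^{n-k}\to[0,a)^k\times(-a,a)^{q-k}$, and part (ii) then reads $h^{-1}(L)\cap U\cong (L\cap h(U))\times(-a,a)^{n-q}$, so that the codimension of $L$ is exactly preserved --- which subsumes any reasonable reading of the (undefined) word ``rank''. Two small corrections to (ii): preimages of locally closed sets under \emph{any} continuous map are locally closed (openness of $h$ is irrelevant there), and your ``one subtlety'' about corner compatibility of $L$ is vacuous, since the paper's notion of submanifold (injective immersion with locally closed image) imposes no transversality to the faces; the product description already gives everything.

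Where your text stops short of a proof is exactly the step you flag, and since that step is the entire content of the lemma, it should be made explicit. (a) Positivity alone (no tameness) gives $d(u_i\circ h)_{m_1}=\sum_j c_{ij}\,dx_j$ with $c_{ij}\ge 0$, and surjectivity of $dh$ makes $C=(c_{ij})$ invertible; what tameness adds is that $C$ maps the orthant $\{\xi\ge 0\}\subset\RR^k$ bijectively onto itself, hence is a positive generalized permutation matrix. This is the precise linear-algebra content of ``no crushing, no spurious boundary directions'': the non-tame map $h(x,y)=(x,x+y)$ on $[0,1)^2$ passes your positivity and independence checks, yet conclusion (i) fails for it (its image near the corner is a skew wedge, never open), so any argument not using the full cone identity must break. (b) The linearized statement at $m_1$ alone does not yet produce a chart onto an open subset of the model; one must show that each $u_i\circ h$ vanishes identically on the corresponding hyperface of $M_1$ near $m_1$. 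This uses depth preservation at the nearby points of depth one (not only at $m_1$), plus connectedness of the depth-one stratum of each hyperface; Hadamard's lemma then gives $u_i\circ h= g_i\, x_{\sigma(i)}$ with $g_i>0$, and replacing $x_{\sigma(i)}$ by $u_i\circ h$ is then \emph{manifestly} a corner-to-corner coordinate change, after which the interior variables are handled by the classical submersion theorem. With (a) and (b) supplied, your outline becomes a complete proof of both parts.
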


We now define Lie groupoids roughly by replacing our spaces with
manifolds with corners and the continuity conditions with smoothness
conditions. The reason why Lie groupoids play an important role in
{applications} is that they model the distribution kernels of many
interesting classes of pseudodifferential operators.

\begin{definition}\label{def.Lie.gr}
A {\em Lie groupoid} is a groupoid $\maG \tto M$ such that
\begin{enumerate}[(i)]
  \item $\maG$ and $M$ are manifolds with corners,
  \item the structural morphisms $d, r, \iota,$ and $u$ are smooth,
  \item $d$ is a tame submersion of manifolds with corners, and
  \item $\mu$ is smooth.
\end{enumerate}
\end{definition}

In particular, the Lie groupoids used in this paper are locally
compact groupoids and are second countable and Hausdorff.  In
different contexts in other papers, it is sometimes useful {\em not to
  assume} the set of arrows $\maG$ of a Lie groupoid to be Hausdorff,
although the space of units $M$ of a Lie groupoid $\maG$ is always
assumed to be Hausdorff. Lie groupoids were introduced by Ehresmann. A
more general class also useful in applications is that of
\emph{continuous family groupoids,} where we assume continuity only
along the units, keeping smoothness along the fibers. See \cite{LMN1,
  PatersonBook}.

For the rest of this section, $\maG$ will always be a {\em Lie
  groupoid}.  In fact, most of the groupoids that we will consider in
what follows will be Lie groupoids. However, one has to be careful
since the restriction of a Lie groupoid is not necessarily a Lie
groupoid itself. Moreover, we shall assume that all our Lie groupoids
are Hausdorff.

Since $d$ and $r$ are tame submersions, it follows from Lemma
\ref{lemma.corners} that the fibers $\maG_x := d\sp{-1}(x)$, $x \in
M$, are smooth manifolds (that is, they have no corners). Similarly,
the same lemma implies that the set $\maG^{(2)} \subset \maG \times
\maG$ of composable units is a manifold as well (but it may have
corners). In particular, it makes sense to consider smooth maps
$\maG^{(2)} \to \maG^{(1)}$.

\begin{remark}\label{rmk.liealgebroid}
Let $\maG \tto M$ be a Lie groupoid and let $A(\maG) := \cup_{x \in M}
T_x \maG_x$, where $\maG_x := d^{-1}(x)$, as usual. Let us denote by
$f_* : TM_1 \to TM_2$ the differential of a smooth map $f : M_1 \to
M_2$. Then $A(\maG)$ is the restriction to units of the vector bundle
$\ker (d_*)$, and hence it has a natural structure of vector bundle
over $M$. It is called the {\em Lie algebroid} of $\maG$. The
differential $r_*$ of the range map $r : \maG \to M$ then induces a
map $\varrho := r_*\vert_{A(\maG)} \to TM$, called the {\em anchor
  map} of $A(\maG)$. We let $\Lie(\maG) := \varrho(\Gamma(M;
A(\maG)))$ denote the image by $\varrho$ of the space of sections of
$A(\maG)$.  It is coincides with the image by $r_*$ of the space
$d$-vertical (i.e. tangent to the fibers $\maG_x$ of $d$) vector
fields on $\maG$ that are invariant for the action of $\maG$ on itself
by right multiplication.  Therefore both $\Gamma(M; A(\maG))$ and
$\Lie(\maG) := \varrho(\Gamma(M; A(\maG)))$ are Lie algebras for the
Lie bracket. Both Lie algebras will play an important role in the
analysis of differential operators.
\end{remark}

\begin{remark}\label{rmk.lie.haar}
We can use the vector bundle $A(\maG)$ to define a Haar system. Let $D
:= |\Lambda\sp{n}A(\maG)|$, where $n$ is the dimension of the Lie
algebroid of $\maG$. The pull-back vector bundle $r\sp{*}(D)$ is the
bundle of 1-densities along the fibers of $d$. A trivialization of $D$
will hence give rise to a right invariant set of measures on $\maG_x$
and hence to a right Haar system.
\end{remark}

We conclude that a Lie groupoid always has a (right) Haar system, and
we shall always assume that it is obtained as in the remark above.

\subsection{Examples of Lie groupoids}\label{ssec.ex.gr}
We continue with various examples of constructions of Lie groupoids
that will be needed in what follows. Recall that we only consider
Hausdorff groupoids in this paper. Also, recall that a Lie groupoid
$\maG$ is metrically amenable if the canonical morphism $C^\star(\maG)
\to C^\star_r(\maG)$ is an isomorphism.

\begin{example}
\label{ex.Lie.group}
Any Lie group $G$ gives rise to a Lie groupoid with set of units
reduced to one point: the identity element of $G$. Thus $d, r : G \to
M:=\{e\}$ are constant, $\mu : G \times G \to G$ is the group
multiplication, $\iota(g) := g^{-1}$ is the (usual) inverse, and $u :
M \to G$ is the inclusion of the unit.  We have that the Lie algebroid
$A(G)$ of $G$ coincides with the Lie algebra of $G$ (the definition of
a Lie algebroid was recalled in Remark~\ref{rmk.liealgebroid}).  Hence
$\Lie(G):=\Gamma(A(\maG))$ is also the Lie algebra of $G$.  We have
that $G$ is metrically amenable if, and onl if, it is amenable in the
usual sense of groups. In particular, if $G$ is solvable, then $G$ is
amenable.
\end{example}

\begin{example}\label{ex.space}
Let $M$ be a manifold with corners (hence {\em Hausdorff} by our
conventions), let $\maG = M$, and $d = r = u = \iota = id_M$. Then
$\maG$ is a Lie groupoid with only units. We shall call a Lie groupoid
with these properties a {\em manifold}. We have that $A(M) = M \times
\{0\}$, that is, the {\em zero} vector bundle. We have $\Lie(\maG) =
0$.
\end{example}

\begin{example} \label{ex.product}
Let $\maG_i \to M_i$, $i = 1, 2$, be two Lie groupoids. Then $\maG_1
\times \maG_2$ is a Lie groupoid with units $M_1 \times M_2$ and
$A(\maG_1 \times \maG_2) = A(\maG_1) \times A(\maG_2)$. We have that
$\Lie(\maG_1 \times \maG_2)$ is a suitable topological tensor product
of $\Lie(\maG_1)$ and $\Lie(\maG_2)$.
\end{example}

The product of a manifold with a Lie group is thus again a Lie
groupoid. We can ``twist'' this example to obtain a (smooth) ``bundle
of Lie groups.''

\begin{example}
\label{ex.BLG}
Let $G$ be a Lie group with automorphism group $\Aut(G)$ and let $P
\to M$ be a smooth, locally trivial, principal $\Aut(G)$-bundle, with
$M$ and $P$ manifolds with corners (hence Hausdorff, by our
conventions).  Then the associated fiber bundle $\maG := P
\times_{\Aut(G)} G$ with fiber $G$ is a Lie groupoid with units $M$. A
Lie groupoid of this form will be called a {\em smooth bundle of Lie
  groups}. It satisfies $d = r$. If $P \to M$ is also differentiable
(with $P$ and $M$ manifolds, possibly with corners), then $\maG$ is a
Lie groupoid. We have $A(\maG) \simeq P \times_{\Aut(G)} Lie(G)$ is a
smooth bundle of Lie algebras and $\varrho : A(\maG) \to TM$ is the
zero map. The Lie algebra $\Lie(\maG)$ identifies thus with the family
of sections of the bundle of Lie algebras $P \times_{\Aut(G)}
Lie(G)$. If $G$ is amenable, then $\maG$ is metrically amenable. The
fact that $\maG$ is metrically amenable when $G$ is amenable will be
of crucial importance to us and will be used when the group $G$ is a
solvable Lie group.
\end{example}

The following simple example of the ``pair groupoid'' will be
fundamental in what follows.

\begin{example}
\label{ex.pair}
Let $M$ be a {\em smooth} manifold. (Thus $M$ has no corners,
according to our terminology). Then the {\em pair groupoid} of $M$ is
$\maG := M \times M$. It is a groupoid with units $M$. The structural
morphisms are as follows: $d$ is the second projection, $r$ is the
first projection, and the product $\mu$ is given by $(m_1, m_2)(m_2,
m_3) = (m_1, m_3)$. This determines $u(m) = (m, m)$ and $\iota(m,m') =
(m', m)$.  We thus have that the product on $\CI_c(\maG) = \CI_c(M
\times M)$ is the product of operators with integral kernels.  We have
$A(M \times M) = TM$, with anchor map the identity map.  One crucial
feature of the pair groupoid is that, for any $x$, the regular
representation $\pi_x$ defines an isomorphism between
$C\sp{\ast}(M\times M)$ and the ideal of compact operators in
$\maL(L^2(M))$. In particular, all pair groupoids are metrically
amenable.
\end{example}

\begin{example}
Let $M$ be a smooth manifold (so without corners) and $\tilde M$ be it
universal covering. Let $\pi_1(M)$ be the fundamental group of $M$
associated to some fixed point of $M$.  A related example to the pair
groupoid is that of the {\em path groupoid} $\maP(M) := (\tilde M
\times \tilde M)/\pi_1(M)$ of $M$, which will have the same Lie
algebroid as the pair groupoid: $A(\maP(M)) = TM$.  The analysis
associated to $\maP(M)$ is that of $\pi_1(M)$-{\em invariant}
operators on the covering space $\tilde M$, thus quite different to
that of the pair groupoid. This underscores the importance of choosing
the right integrating groupoid when interested in Analysis.
\end{example}

We extend the example of the pair groupoid by defining {\em fibered
  pull-back groupoids} \cite{HigginsMackenzie1, HigginsMackenzie2} (we
use the terminology in \cite{nistorDesing}, however).

\begin{example}\label{ex.pullback} Let again $M$ and $L$ be
manifolds and $f : M \to L$ be {\em continuous} map. Let $\maH \tto L$
be a Lie groupoid. The {\em fibered pull-back groupoid} is then
\begin{equation*}
  f\pullback (\maH) \ede \{\, (m, g, m') \in M \times \maH \times M,
  f(m) = r(g),\, d(g) = f(m') \, \} \,,
\end{equation*}
with units $M$ and product $(m, g, m') (m', g', m'') = (m, g
g', m'')$. We shall also sometimes write $M \times_f \maH \times_f
M = f\pullback (\maH)$ for the fibered pull-back groupoid. Assume now
that $f$ is a tame submersion (in particular, that it is smooth). Then
$ f\pullback (\maH)$ is a Lie groupoid and its Lie algebroid is
\begin{equation*}
 A( f\pullback (\maH)) \seq \{ (\xi , X)\vert\ \xi \in A(\maH)\,,\ X \in TM\,,\
 \varrho(\xi) = f_*(X) \}\,,
\end{equation*}
the {\em thick pull-back} Lie algebroid $f\pullback A(\maH)$ of
$A(\maH)$, see \cite{MackenzieBook1, MackenzieBook2}.
\end{example}

The following particular example of a fibered pull-back will be useful
for the study of the $b$-groupoid $\maG_b$ in Section \ref{sec.ex}.

\begin{example}\label{ex.help-for-b}
Let us assume that $M$ is a smooth manifold and let $B$ be its set of
connected components. Let $G$ be a Lie group. We let $\maH := B \times
G$, the product of a manifold and a Lie group, and $f : M \to B$ be
the map that associates to a point its connected component. Then
$f\pullback(\maH)$ is the topological disjoint union of the groupoids
$(F \times F) \times G$ (product of the pair groupoid and a Lie group)
for $F$ ranging through the connected components of $M$.
\end{example}

\section{Exhaustive families of representations and Exel's question}
\label{sec.exhaustive}

We next recall some basic facts on {\em exhaustive} families of
representations following \cite{nistorPrudhon}. A groupoid has {\em
  Exel's property} when its set of regular representations is
exhaustive. Exel \cite{ExelInvGr} has asked which groupoids have this
property.  Establishing that a groupoid has Exel's property will turn
out to be important in establishing Fredholm conditions in the next
section. At the end of the section, we prove that groupoids given by
fibered pull-backs of bundles of Lie groups always have Exel's
property, which leads us to introduce, and extend this property to,
the more general class of \emph{\ssub\ groupoids}.

\subsection{Exhaustive families of representations
of $C\sp{\ast}$-algebras}\label{subsect-exhaustive} A two-sided ideal
$I \subset A$ of a $C^*$-algebra $A$ is called {\em primitive} if it
is the kernel of an irreducible representation of $A$. We shall denote
by $\Prim(A)$ the set of primitive ideals of $A$. The trivial ideal
$A$ (of $A$) is {\em not} considered a primitive ideal (of $A$), so $A
\notin \Prim(A)$.  For any representation $\phi$ of $A$, we define its
{\em support} $\supp(\phi) \subset \Prim(A)$ as the set of primitive
ideals of $A$ {\em containing} $\ker(\phi)$. Let $J$ be a closed,
two-sided ideal of $A$.  Then $\Prim(J) = \{ I \in \Prim(A)\vert\, J
\not\subset I\}$ and $\Prim(A/J)$ identifies with $\{ I \in
\Prim(A)\vert\, J \subset I\} = \Prim(J)^c$.  We endow $\Prim(A)$ with
the hull-kernel topology, which is the topology whose open sets are
those of the form $\Prim(J)$, with $J$ a two-sided ideal of $A$.

We shall need the following definition \cite{nistorPrudhon}.

\begin{definition}\label{def.exhaustive}
Let $\maF$ be a set of representations of a $C\sp{\ast}$-algebra $A$.
We say that $\maF$ is {\em exhaustive} if $\Prim( A ) = \bigcup_{\phi
  \in \maF}\, \supp(\phi)$.
\end{definition}

In practice, we rather need families with the following definition
\cite{Roch}.

\begin{definition}\label{def.ip}
Let $\maF$ be a set of representations of a unital
$C\sp{\ast}$-algebra $A$.
\begin{enumerate}[(i)]
\item We say that $\maF$ is {\em strictly norming} if, for any $a \in
  A$, there exists $\phi \in \maF$ such that $\|\phi(a)\| = \|a\|$.
\item We say that $\maF$ is {\em strictly spectral} if, for any $a \in
  A$, we have that $a$ is invertible in $A$ if, and only if, $\phi(a)$
  is invertible for all $\phi \in \maF$.
\end{enumerate}
\end{definition}

The two definitions above can be formulated for sets of morphisms or
sets of primitive ideals. We also have that $\maF_*:= \{\pi \in
\maF\vert\, \pi \neq 0\}$ has the same properties (``exhaustive,''
``strictly norming,'' ... ) as $\maF$.

If $A$ is non-unital, we modify the last definition as follows
\cite{nistorPrudhon}.  Let $A\sp{+} := A \oplus \CC$ and $\chi_0
\colon A\sp{+} \to \CC$ be the morphism defined by $\chi_0 = 0$ on $A$
and $\chi_0(1) = 1$.  We then replace $A$ with $A\sp{+}$ and $\maF$
with $\maF^{+} := \maF \cup \{\chi_0\}$.  This works also for
exhaustive families since $\maF$ is exhaustive for $A$ if, and only
if, $\maF^{+}$ is exhaustive for $A^{+}$. Sometimes it is convenient
to use the following alternative characterization of strictly spectral
sets of representations. The set of representations $\maF$ of $A$ is
strictly spectral if it satisfies the following property: $1 + a \in
A\sp{+}$, $a \in A$, is invertible if, and only if, $1 + \phi(a)$ is
invertible for any $\phi \in \maF$.

The set of all equivalence classes of irreducible representations of a
unital $C\sp{\ast}$-algebra is strictly norming (see \cite{Dixmier,
  ExelInvGr}).  Therefore any exhaustive family is strictly norming
\cite{nistorPrudhon}. We recall the following results from
\cite{nistorPrudhon, Roch}, which establish the relations between
these notions.

\begin{theorem}\label{thm.exhaustive}
Let $\maF$ be a set of non-degenerate representations of a
$C\sp{\ast}$-algebra $A$.  Then $\maF$ is strictly norming if, and
only if, it is strictly spectral.  Every exhaustive set of
representations is strictly spectral.  If $A$ is furthermore
separable, then the converse is also true.
\end{theorem}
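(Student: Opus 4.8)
The plan is to prove the three assertions in turn, reducing everything to positive elements and to spectral data, and to isolate the separable converse as the one genuinely delicate step. Throughout I may pass to the unitization $A^{+}$ and to $\maF^{+}$, since all three properties transfer as already recorded, so I work in a unital $A$ with non-degenerate (hence unital) representations $\phi$.

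\textbf{Strictly norming $\Leftrightarrow$ strictly spectral.} Since $a$ is invertible iff both $a^*a$ and $aa^*$ are, and since $\|\phi(a)\|^2 = \|\phi(a^*a)\|$, it suffices to treat a positive element $b \ge 0$. Set $\beta := \|b\| = \max\sigma(b)$ and $c := \beta 1 - b \ge 0$, so that $\|c\| = \beta - \min\sigma(b)$; thus $b$ is invertible iff $\|c\| < \beta$, while $\beta \in \sigma(b)$ always puts $0 \in \sigma(c)$. For a unital $\phi$ one has $\phi(c) = \beta 1 - \phi(b)$ with $0 \le \phi(b) \le \beta 1$, whence $\|\phi(c)\| = \beta - \min\sigma(\phi(b))$. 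If $\maF$ is strictly norming, pick $\phi$ with $\|\phi(c)\| = \|c\|$; this gives $\min\sigma(b) = \min\sigma(\phi(b))$, so if every $\phi(b)$ is invertible then $\min\sigma(b) > 0$ and $b$ is invertible, i.e. $\maF$ is strictly spectral. Conversely, if $\maF$ is strictly spectral, then the non-invertible $c = \beta 1 - b$ forces some $\phi(c)$ to be non-invertible, i.e. $\beta \in \sigma(\phi(b))$, so $\|\phi(b)\| = \beta = \|b\|$ and $\maF$ is strictly norming.

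\textbf{Exhaustive $\Rightarrow$ strictly spectral.} One direction is immediate from unitality of the $\phi$. For the other I argue contrapositively: if $a$ is not invertible then, as above, $b := a^*a$ or $aa^*$ is a non-invertible positive element, so $\min\sigma(b) = 0$. A pure state $\omega$ with $\omega(\beta 1 - b) = \|\beta 1 - b\| = \beta$ has irreducible GNS representation $\rho$ with $\|\rho(\beta 1 - b)\| = \beta$, hence $\min\sigma(\rho(b)) = 0$; so $\rho(b)$, and therefore $\rho(a)$, is non-invertible. Let $I := \ker\rho \in \Prim(A)$. Exhaustiveness gives $\phi \in \maF$ with $\ker\phi \subseteq \ker\rho$, so $\rho$ factors as $\rho = \psi\circ\phi$ for a unital $*$-homomorphism $\psi$ of the $C^*$-algebra images. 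By spectral permanence, were $\phi(a)$ invertible it would be invertible in $\overline{\phi(A)}$, forcing $\rho(a) = \psi(\phi(a))$ to be invertible — a contradiction. Hence some $\phi(a)$ is non-invertible, which is strict spectrality.

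\textbf{The separable converse (main obstacle).} Assuming $\maF$ is strictly spectral but not exhaustive, I must produce a single non-invertible element whose image under every $\phi$ is invertible. Non-exhaustiveness yields a primitive ideal $P$ with $P \notin \supp(\phi)$, i.e. $\ker\phi \not\subseteq P$, for every $\phi \in \maF$. The strategy is to convert invertibility into a separation problem on $\Prim(A)$: for positive $x$ the function $I \mapsto \min\sigma(\rho_I(x)) = \|x\| - \|(\|x\|1 - x) + I\|$ is upper semicontinuous (the subtracted term being lower semicontinuous), and $\phi(x)$ is invertible exactly when this function is bounded below by a positive constant on the closed set $\supp(\phi)$. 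I therefore seek a positive $x$ that is bounded below on each $\supp(\phi)$ while $\min\sigma(\rho_P(x)) = 0$. This is precisely where separability is indispensable: it renders $\Prim(A)$ second countable and a Baire space, and the witnessing element is assembled as a norm-convergent series $\sum_n 2^{-n} x_n$ of positive pieces chosen along a countable neighborhood basis at $P$, so that each constraint is satisfied with room to spare while the value at $P$ is driven to the boundary of invertibility. This countable Baire-category construction, carried out in \cite{nistorPrudhon}, is the technical heart of the theorem and the step I expect to be hardest; it is also the only point at which the separability hypothesis cannot be dispensed with.
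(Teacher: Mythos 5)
The first thing to say is that the paper contains no proof of Theorem \ref{thm.exhaustive}: it is recalled from \cite{Roch} and \cite{nistorPrudhon}, so your proposal can only be compared with the arguments in those references. Measured against them, your first two parts are correct, complete, and essentially the standard proofs. The norming/spectral equivalence via reduction to positive elements ($a$ is invertible iff $a^*a$ and $aa^*$ are, plus the $C^*$-identity) together with the formula $\|\phi(\beta 1 - b)\| = \beta - \min\sigma(\phi(b))$ for unital $\phi$ is exactly the classical argument; and your proof that exhaustive implies strictly spectral --- produce an irreducible $\rho$ with $\rho(a)$ non-invertible from a norm-attaining pure state, use exhaustiveness to get $\ker\phi\subseteq\ker\rho$ for some $\phi\in\maF$, factor $\rho=\psi\circ\phi$, and conclude by spectral permanence --- is also the one in the cited literature, with all auxiliary facts (unitality of non-degenerate representations, closedness of $\phi(A)$, pure states attaining norms of positive elements) invoked correctly.

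The genuine gap is the separable converse, which you do not prove. Your reduction of it is right: if $\maF$ is not exhaustive, pick $P\in\Prim(A)$ with $\ker\phi\not\subseteq P$ for all $\phi$, and seek a positive $x$ with $\min\sigma(\pi_P(x))=0$ but $\inf_{Q\in\supp(\phi)}\min\sigma(\pi_Q(x))>0$ for each $\phi$, separability entering through second countability of $\Prim(A)$ (the Baire property, incidentally, holds for every $C^*$-algebra and is not where separability is used). But the witness element is only gestured at, and the gesture conceals the actual difficulty. If $(U_n)$ is a decreasing neighborhood basis at $P$, $I_n$ the ideal with $\Prim(I_n)=U_n$, and $x_n\in I_n$ are positive with $\|x_n\|=\|\pi_P(x_n)\|=1$, it does \emph{not} follow that $\|\pi_P(\sum_n 2^{-n}x_n)\|=1$: norms of positive operators are not additive (think of orthogonal projections), so the value at $P$ can drop strictly below $1$, in which case $1-\sum_n 2^{-n}x_n$ may be invertible and no contradiction with strict spectrality arises. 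The missing idea is a coherence device at $P$: choose $x_{n+1}$ inside a hereditary subalgebra of the form $\overline{g(x_n)Ag(x_n)}$ on which $x_n$ acts as a unit, so that $x_nx_{n+1}=x_{n+1}$ --- possible because $\pi_P$ restricted to each $I_n$ is irreducible. Then unit vectors nearly fixed by $\pi_P(x_N)$ are nearly fixed by every $\pi_P(x_n)$ with $n\le N$, giving $\|\pi_P(\sum_n 2^{-n}x_n)\|=1$, while on each $\supp(\phi)$ all but finitely many $x_n$ are killed, giving $\|\phi(\sum_n 2^{-n}x_n)\|\le 1-2^{-N+1}<1$. This nesting construction is the technical heart of the result in \cite{nistorPrudhon}; without it (or an equivalent), your proposal establishes two of the three assertions and defers the third to the very reference the paper cites --- which is admittedly also what the paper does, but it is precisely the part a proof of this theorem must supply.
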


Let $A$ be a $C\sp{\ast}$-algebra and $I \subset A$ be a closed
two-sided ideal.  Recall that any nondegenerate representation $\pi
\colon I \to \maL(\maH)$ extends to a unique representation $\pi
\colon A \to \maL(\maH)$.  (See \cite[Proposition~2.10.4]{Dixmier}.)
This leads to the following results (see \cite{nistorPrudhon},
Proposition 3.15 and Corollary 3.16).

\begin{proposition}\label{prop.ideal}
Let $I \subset A$ be an ideal of a $C\sp{\ast}$-algebra. Let $\maF_I$
be a set of nondegenerate representations of $I$ and $\maF_{A/I}$ be a
set of representations of $A/I$.  Let $\maF := \maF_I \cup
\maF_{A/I}$, regarded as a family of representations of $A$. If
$\maF_I$ and $\maF_{A/I}$ are both exhaustive, then $\maF$ is also
exhaustive. The same result holds by replacing exhaustive with
strictly norming.
\end{proposition}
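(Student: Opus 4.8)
The plan is to prove Proposition~\ref{prop.ideal} for exhaustive families by translating the statement into the language of primitive ideals and the hull-kernel topology, where exhaustiveness becomes a covering condition on $\Prim(A)$. Recall from the discussion preceding Definition~\ref{def.exhaustive} that for a closed two-sided ideal $I \subset A$, the primitive ideal space splits as a disjoint union $\Prim(A) = \Prim(I) \sqcup \Prim(A/I)$, where $\Prim(I) = \{ P \in \Prim(A) \mid I \not\subset P\}$ is an open subset (it equals $\Prim(I)$ viewed inside $\Prim(A)$ via the extension-of-representations correspondence) and $\Prim(A/I)$ is the complementary closed subset $\{ P \in \Prim(A) \mid I \subset P\}$. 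The key structural fact I would use is that supports behave well under this decomposition: for a nondegenerate representation $\phi$ of $I$, extended uniquely to $A$ by \cite[Proposition~2.10.4]{Dixmier}, its support computed in $\Prim(A)$ lands inside the open piece $\Prim(I)$ and agrees with its support computed in $\Prim(I)$; dually, for a representation $\psi$ of $A/I$, pulled back to $A$, its support lands in the closed piece $\Prim(A/I)$ and agrees with its support there.

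First I would make precise the identification of supports. For $\phi$ a nondegenerate representation of $I$ and its extension $\tilde\phi$ to $A$, one has $\ker(\tilde\phi) \cap I = \ker(\phi)$, and since $\tilde\phi$ is determined by $\phi$, a primitive ideal $P \in \Prim(A)$ contains $\ker(\tilde\phi)$ if and only if $I \not\subset P$ and the corresponding primitive ideal $P \cap I$ of $I$ contains $\ker(\phi)$. This gives $\supp_A(\tilde\phi) = \supp_I(\phi)$ under the homeomorphism $\Prim(I) \cong \{P \in \Prim(A) \mid I \not\subset P\}$. The analogous and easier statement for $A/I$ follows because $\Prim(A/I)$ embeds as a closed subset of $\Prim(A)$ and pulling back a representation along $A \to A/I$ only enlarges the kernel by $I$, so supports match directly.

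Granting these identifications, the main argument is short. Assume $\maF_I$ is exhaustive for $I$ and $\maF_{A/I}$ is exhaustive for $A/I$. Then
\begin{equation*}
 \bigcup_{\phi \in \maF_I} \supp_A(\phi) \seq \bigcup_{\phi \in \maF_I} \supp_I(\phi) \seq \Prim(I)
\end{equation*}
as subsets of $\Prim(A)$, and similarly $\bigcup_{\psi \in \maF_{A/I}} \supp_A(\psi) = \Prim(A/I)$. Taking the union over $\maF = \maF_I \cup \maF_{A/I}$ and using the disjoint decomposition $\Prim(A) = \Prim(I) \sqcup \Prim(A/I)$ gives $\bigcup_{\phi \in \maF} \supp(\phi) = \Prim(A)$, which is exactly the assertion that $\maF$ is exhaustive. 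For the strictly norming variant, I would not re-prove it from scratch but invoke the interplay recorded in Theorem~\ref{thm.exhaustive}: over a general (not necessarily separable) $C^*$-algebra one should argue directly, using that $\maF$ strictly norming means every $a \in A$ attains its norm under some $\phi \in \maF$, and splitting $\|a\|_A = \max\{\|a + I\|_{A/I}, \text{(norm seen by } I\text{)}\}$ via the short exact sequence and the fact that the quotient norm and the ideal are jointly norm-detecting.

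The step I expect to be the main obstacle is the clean identification of $\supp_A(\tilde\phi)$ with $\supp_I(\phi)$ under the extension of representations, since one must verify carefully that the extension does not create spurious primitive ideals in its support outside $\Prim(I)$ and that the hull-kernel topologies are compatible across the open-closed decomposition; this is where the cited results \cite{nistorPrudhon} (Proposition~3.15 and Corollary~3.16) do the real work, and I would lean on \cite[Proposition~2.10.4]{Dixmier} for the uniqueness and nondegeneracy of the extension to keep the support computation unambiguous.
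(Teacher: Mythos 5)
Your overall plan for the exhaustive half --- split $\Prim(A) = \Prim(I) \sqcup \Prim(A/I)$ and cover each piece by the corresponding family --- is the right skeleton, and it is essentially the argument of the reference; note, though, that the paper itself gives \emph{no} proof of this proposition but quotes it from \cite{nistorPrudhon} (Proposition 3.15 and Corollary 3.16), so your closing move of letting ``the cited results do the real work'' is not available to you: those results \emph{are} the statement to be proved. More importantly, your ``key structural fact'' is false. For a nondegenerate representation $\phi$ of $I$ with extension $\tilde\phi$ to $A$, it is \emph{not} true that $\supp_A(\tilde\phi)$ lands inside $\Prim(I)$, nor that it equals $\supp_I(\phi)$; the ``only if'' half of your claimed equivalence fails. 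Concretely, let $\maH$ be an infinite-dimensional Hilbert space, $I := \maK(\maH)$ the compact operators, $A := \maK(\maH) + \CC 1 \subset \maL(\maH)$, and $\phi$ the identity representation of $I$ on $\maH$ (irreducible, nondegenerate). Then $\tilde\phi$ is the inclusion $A \subset \maL(\maH)$, so $\ker\tilde\phi = 0$ and $\supp_A(\tilde\phi) = \Prim(A) = \{0, I\}$; this support contains the primitive ideal $I$ itself (the kernel of the character $A \to A/I \cong \CC$), which lies in the closed piece $\Prim(A/I)$, not in $\Prim(I)$. So the extension \emph{does} create primitive ideals in its support outside $\Prim(I)$ --- exactly the obstacle you flagged. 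What saves your covering argument is that only one inclusion is needed: if $P \in \Prim(A)$, $I \not\subset P$, and $\ker\phi \subset P \cap I$, then $P \in \supp_A(\tilde\phi)$. This is true, but ``since $\tilde\phi$ is determined by $\phi$'' is not a proof of it. A correct argument: nondegeneracy gives $\ker\tilde\phi = \{ a \in A : aI \subset \ker\phi\}$, hence $(\ker\tilde\phi)\, I \subset \ker\phi \subset P$, and since primitive ideals are prime and $I \not\subset P$, we conclude $\ker\tilde\phi \subset P$. (Equivalently: write $P = \ker\pi$ with $\pi$ irreducible; then $\pi\vert_I$ is nonzero, hence nondegenerate, so $aI \subset \ker\pi$ forces $\pi(a) = 0$.) With this one-sided lemma in place of the false equality, your two-case analysis ($I \subset P$ handled by $\maF_{A/I}$, whose pulled-back supports do match exactly as you say; $I \not\subset P$ handled by $\maF_I$) correctly proves the exhaustive assertion.

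The strictly norming half, by contrast, is not proved in your proposal. The norm identity $\|a\| = \max\bigl\{\|a+I\|_{A/I},\ \sup_\rho \|\tilde\rho(a)\|\bigr\}$ (supremum over nondegenerate representations $\rho$ of $I$) is correct, but strict norming requires the norm to be \emph{attained} by a single member of $\maF$, and in the case $\|a\| > \|a+I\|_{A/I}$ this does not follow from the hypothesis that $\maF_I$ norms elements \emph{of $I$}: that hypothesis only yields $\sup_{\phi \in \maF_I}\|\tilde\phi(a)\| = \|a\|$, which is weaker than attainment. A genuine extra argument is needed; for instance, set $t := \|a^*a\|$ and $s := \|a^*a + I\|_{A/I} < t$, choose a continuous function $g$ on $[0,t]$ with $0 \le g \le 1$, $g = 0$ on $[0,s]$, and $g^{-1}(1) = \{t\}$; then $y := g(a^*a) \in I$ with $\|y\| = 1$, so some $\phi \in \maF_I$ satisfies $\|\phi(y)\| = 1$, and since $\phi(y) = g\bigl(\tilde\phi(a^*a)\bigr)$, the spectral mapping theorem forces $t \in \sigma\bigl(\tilde\phi(a^*a)\bigr)$, i.e.\ $\|\tilde\phi(a)\| = \|a\|$. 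Without an argument of this sort (or a reduction to the exhaustive case via Theorem \ref{thm.exhaustive}, which, however, would require separability of $A$ --- a hypothesis the proposition does not make), the second assertion of the proposition remains unproved in your write-up.
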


In the following corollary, one should think of the invertibility of
$a$ in $A/I$ as an ``ellipticity'' condition.

\begin{corollary}\label{cor.ideal}
Let $I \subset A$ be an ideal of a unital $C\sp{\ast}$-algebra $A$ and
let $\maF_I$ be a strictly spectral set of nondegenerate
representations of $I$. Let $a \in A$. Then $a$ is invertible in $A$
if, and only if, it is invertible in $A/I$ and $\phi(a)$ is invertible
for all $\phi \in \maF_I$.
\end{corollary}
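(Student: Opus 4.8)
The plan is to prove Corollary \ref{cor.ideal} as a direct consequence of Proposition \ref{prop.ideal} together with the basic structural facts about strictly spectral families and ideals that have just been recalled. The strategy is to build an exhaustive (equivalently, strictly spectral) family for all of $A$ out of the given family $\maF_I$ on $I$ and a suitable family on the quotient $A/I$, and then to read off the invertibility criterion from the definition of strictly spectral.

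First I would set up the quotient family. Let $\maF_{A/I}$ be the set of \emph{all} (equivalence classes of nondegenerate) irreducible representations of $A/I$; by the remarks following Theorem \ref{thm.exhaustive}, this family is strictly norming for the unital $C\sp{\ast}$-algebra $A/I$, hence exhaustive. Now $\maF_I$ is strictly spectral by hypothesis and consists of nondegenerate representations; since each such representation of the ideal $I$ extends uniquely to a representation of $A$ (the extension fact recalled from \cite[Proposition~2.10.4]{Dixmier}), I may regard $\maF_I$ as a family of representations of $A$. Likewise each $\phi \in \maF_{A/I}$ is a representation of $A$ via the quotient map $A \to A/I$. Thus $\maF := \maF_I \cup \maF_{A/I}$ is a family of representations of $A$.

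Next I would apply Proposition \ref{prop.ideal}. To invoke it I need \emph{both} $\maF_I$ and $\maF_{A/I}$ to be exhaustive. The quotient family is exhaustive by construction. For $\maF_I$, here is the one genuinely delicate point: the hypothesis only gives ``strictly spectral,'' and the equivalence of strictly spectral with exhaustive in Theorem \ref{thm.exhaustive} requires \emph{separability} of the algebra. So I must address whether $I$ is separable, or else arrange the argument to use only the strictly-norming form of Proposition \ref{prop.ideal}, which by Theorem \ref{thm.exhaustive} is equivalent to strictly spectral for nondegenerate families \emph{without} any separability assumption. The clean route is therefore to use the ``strictly norming'' version of Proposition \ref{prop.ideal}: $\maF_I$ is strictly spectral, hence strictly norming (Theorem \ref{thm.exhaustive}), $\maF_{A/I}$ is strictly norming, so by the last sentence of Proposition \ref{prop.ideal} the union $\maF$ is strictly norming, and therefore strictly spectral again by Theorem \ref{thm.exhaustive}. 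This sidesteps the separability hypothesis entirely, and I expect this to be the main obstacle to get right --- choosing the norming formulation so that Theorem \ref{thm.exhaustive} applies to the nondegenerate union without extra assumptions.

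Finally I would translate strict spectrality of $\maF$ into the stated criterion. By definition, $a \in A$ is invertible if and only if $\phi(a)$ is invertible for every $\phi \in \maF = \maF_I \cup \maF_{A/I}$. The conditions coming from $\maF_I$ are precisely ``$\phi(a)$ is invertible for all $\phi \in \maF_I$.'' The conditions coming from $\maF_{A/I}$ say that the image of $a$ under every irreducible representation of $A/I$ is invertible; since $\maF_{A/I}$ is strictly spectral for $A/I$, this is equivalent to $a$ being invertible in $A/I$, i.e. the ``ellipticity'' condition. Combining the two batches of conditions gives exactly the claim: $a$ is invertible in $A$ if, and only if, it is invertible in $A/I$ and $\phi(a)$ is invertible for all $\phi \in \maF_I$. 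One should note that nondegeneracy of the union $\maF$ holds because each constituent family is nondegenerate, so that Theorem \ref{thm.exhaustive} is legitimately applicable at the final step.
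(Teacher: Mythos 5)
Your proof is correct and follows essentially the route the paper intends (the paper states this corollary without proof, citing \cite{nistorPrudhon}): take $\maF_{A/I}$ to be the set of all irreducible representations of $A/I$, combine it with $\maF_I$ via the strictly norming form of Proposition \ref{prop.ideal}, and pass between strictly spectral and strictly norming using Theorem \ref{thm.exhaustive}, which for nondegenerate families requires no separability. The only blemish is your parenthetical inference ``strictly norming, hence exhaustive,'' which is precisely the implication of Theorem \ref{thm.exhaustive} that \emph{does} require separability---exhaustiveness of the family of all irreducible representations instead holds trivially, since every primitive ideal $P$ of $A/I$ lies in $\supp(\phi)$ for any irreducible $\phi$ with $\ker\phi = P$---but since your final argument deliberately bypasses exhaustiveness and runs entirely through the strictly norming formulation, nothing in the proof is affected.
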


We now address Morita equivalence {\cite{rieffelInducedCstar} in a
  very simple form (see also \cite{PatersonBook, renaultBook}).}  It
is known that there exists a homeomorphism between the primitive ideal
spectra of Morita equivalent $C^*$-algebras, so exhaustive families of
representations will correspond to exhaustive families of
representations under Morita equivalence.  In particular, we have the
following result.

\begin{proposition}\label{prop.Morita} Let $\maF$ be
a set of representations of a $C^{*}$-algebra $A$.
\begin{enumerate}[(i)]
 \item If $\maF_n := \{\pi \otimes 1\vert \, \pi \in \maF\}$ is the
   corresponding family of representations of $M_n(A) = A \otimes
   M_n(\CC)$, then $\maF$ is exhaustive if, and only if, $\maF_n$ is
   exhaustive.
 \item Let $I \subset A$ be a closed, two-sided ideal.  If $\maF$ is
   exhaustive, then $\maF_I := \{\pi\vert_I \vert\, \pi \in \maF\}$ is
   an exhaustive family of representations of $I$.
\item Let $e \in M_n(A)$ be a projection. Then $\maF_n$ defines, by
  restriction, an exhaustive family of representations of $eM_n(A)e$.
\end{enumerate}

\end{proposition}

\begin{proof}
The first part follows by Morita equivalence. The second part follows
since $\supp(\phi\vert_I) = \supp(\phi) \cap \Prim(I)$. The last part
is proved by combining the first two parts. Indeed, $\maF_n$ defines
an exhaustive set of representations of $M_n(A)$, and hence an
exhaustive set of representations of $M_n(A)eM_n(A)$. Since the
algebra $eM_n(A)e$ is Morita equivalent to $M_n(A)eM_n(A)$, by
Proposition 4.27 in \cite{GBVF} (see also \cite{rieffelInducedCstar}),
the result follows.
\end{proof}

\subsection{Exel's question and properties}
We will let $\maG$ denote a Lie groupoid and let $\maR(\maG)$ will
denote its set of regular representations.  The discussion below
usually makes sense in the more general setting of locally compact
groupoids, but we restrict, nevertheless to Lie groupoids, for
simplicity.

In \cite{ExelInvGr}, Exel has asked for which groupoids $\maG$ is the
set $\maR(\maG)$ a strictly norming set of representations of
$\rCs{\maG}$. An example due to Voiculescu, see \cite{nistorPrudhon},
shows that not every locally compact groupoid has this property.  (The
question whether all Lie groupoids have Exel's property is still
open.) It is easier to work with exhaustive families of
representations, so we introduce the following definition.

\begin{definition}
We say that $\maG$ has {\em Exel's property} if $\maR(\maG)$, the set
of regular representations of $\maG$, is an exhaustive set of
representations for $\rCs{\maG}$. We say that $\maG$ has {\em Exel's
  strong property} if $\maR(\maG)$ is an exhaustive set of
representations for $\Cs{\maG}$.
\end{definition}

We have that
\begin{equation*}
 \bigcup_{\phi \in \maF}\, \supp(\phi) = \bigcup_{\phi \in \maF}\,
 \Prim(A)\backslash \Prim(\ker(\phi)) = \Prim(A) \backslash
 \bigcap_{\phi \in \maF}\, \Prim(\ker(\phi)).
\end{equation*}
Hence, if $\maF$ is an exhaustive family of representations of a
$C^*$-algebra $A$, then $\bigcap_{\phi \in \maF}\, \Prim(\ker(\phi))=
\emptyset,$ and hence $\maF$ is a faithful family of representations
of $A$.  In particular, $\maG$ has Exel's strong property if, and only
if, $\maG$ has Exel's property and is metrically amenable.

We shall need the following consequence of Propositions
\ref{renault.exact} and \ref{prop.ideal} of the previous subsection.

\begin{corollary}\label{cor.sufficient}
Let $U \subset M$ be an open, $\maG$-invariant subset and assume
$\maG$ to be second countable. We let $F := M \smallsetminus U$. Let
us assume that $\maG_U$ and $\maG_F$ have Exel's strong property. Then
$\maG$ also has Exel's strong property.
\end{corollary}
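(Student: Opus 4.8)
The plan is to reduce the statement about Exel's strong property to the purely $C\sp{\ast}$-algebraic statement about exhaustive families that was already assembled in Proposition~\ref{prop.ideal}, using the exact sequence for full groupoid $C\sp{\ast}$-algebras from Proposition~\ref{renault.exact}. First I would recall that, by the definition of Exel's strong property, the hypothesis gives that $\maR(\maG_U)$ is an exhaustive family of representations of $\Cs{\maG_U}$ and that $\maR(\maG_F)$ is an exhaustive family of representations of $\Cs{\maG_F}$. The goal is to show $\maR(\maG)$ is exhaustive for $\Cs{\maG}$. The key structural input is part~\eqref{renault.exact_item1} of Proposition~\ref{renault.exact}: since $U$ is open and $\maG$-invariant with $F = M \smallsetminus U$, we have the short exact sequence
\begin{equation*}
 0 \to \Cs{\maG_U} \to \Cs{\maG} \xrightarrow{\rho_F} \Cs{\maG_F} \to 0 \,,
\end{equation*}
so that $I := \Cs{\maG_U}$ is a closed two-sided ideal of $A := \Cs{\maG}$ with quotient $A/I \cong \Cs{\maG_F}$.

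The second step is to match the regular representations of $\maG$ against this ideal decomposition, using Remark~\ref{rem.disjoint}, which records that $\maG = \maG_F \sqcup \maG_U$ at the level of units. For $x \in U$, the regular representation $\pi_x$ of $\Cs{\maG}$ restricts to a regular representation of the ideal $I = \Cs{\maG_U}$ (it is nondegenerate there), and these account for the family $\maR(\maG_U)$ acting on $I$. For $x \in F$, the regular representation $\pi_x$ factors through the quotient map $\rho_F$, i.e.\ it is the composition of $\rho_F$ with a regular representation of $\Cs{\maG_F}$, so these give the family $\maR(\maG_F)$ viewed as representations of $A/I$. Thus, writing $\maF_I := \maR(\maG_U)$ (representations of $I$) and $\maF_{A/I} := \maR(\maG_F)$ (representations of $A/I$), the total set $\maR(\maG)$ coincides, up to the identifications above, with $\maF_I \cup \maF_{A/I}$ regarded as a family of representations of $A$.

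The third step is then a direct invocation of Proposition~\ref{prop.ideal}: since $\maF_I = \maR(\maG_U)$ is exhaustive for $I$ and $\maF_{A/I} = \maR(\maG_F)$ is exhaustive for $A/I$ (both by the hypothesis of Exel's strong property), their union $\maF = \maF_I \cup \maF_{A/I}$ is exhaustive for $A = \Cs{\maG}$. Since this union is exactly $\maR(\maG)$, we conclude that $\maR(\maG)$ is exhaustive for $\Cs{\maG}$, which is the assertion that $\maG$ has Exel's strong property.

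I expect the only real obstacle to be the bookkeeping in the second step, namely verifying cleanly that the regular representations of $\maG$ split precisely into those restricting nondegenerately to the ideal $I$ (indexed by $x \in U$) and those factoring through the quotient $A/I$ (indexed by $x \in F$), and that under the canonical extension of representations of an ideal to the whole algebra (used in Proposition~\ref{prop.ideal} via \cite[Proposition~2.10.4]{Dixmier}) the family $\maF_I$ genuinely recovers the $\maG$-regular representations at points of $U$. The second countability hypothesis is what guarantees we remain in the setting where Proposition~\ref{renault.exact} applies. Everything else is formal, and the proof reduces to citing Propositions~\ref{renault.exact} and~\ref{prop.ideal}.
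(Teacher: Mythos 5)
Your proof is correct, and its backbone is the paper's: split $\maR(\maG)$ as $\maR(\maG_U)\cup\maR(\maG_F)$ along the exact sequence of Proposition \ref{renault.exact} and feed this into Proposition \ref{prop.ideal}. The one genuine difference is how the full and reduced algebras are handled. You stay entirely at the level of full $C\sp{\ast}$-algebras, so you only need part (i) of Proposition \ref{renault.exact}, and metric amenability never enters your argument. The paper instead first unpacks the hypothesis as ``Exel's property plus metric amenability'' for $\maG_U$ and $\maG_F$, writes $I = \Cs{\maG_U} = \rCs{\maG_U}$ and $A/I \simeq \Cs{\maG_F} \simeq \rCs{\maG_F}$, and invokes part (iii) of Proposition \ref{renault.exact} to conclude that $\maG$ itself is metrically amenable before applying Proposition \ref{prop.ideal}. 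Your route is the leaner one, and nothing is lost: the conclusion you prove (that $\maR(\maG)$ is exhaustive for $\Cs{\maG}$) implies metric amenability of $\maG$ a posteriori, because exhaustive families are faithful and every regular representation factors through $\rCs{\maG}$, so the canonical surjection $\Cs{\maG}\to\rCs{\maG}$ is injective; this is exactly the equivalence ``strong Exel $=$ Exel $+$ metrically amenable'' recorded in the paper just before the corollary. What the paper's detour buys is that this amenability of $\maG$ is made explicit en route, which is convenient since it is part of what is asserted later (e.g.\ in Theorem \ref{thm.lemma.Exel}). Finally, the bookkeeping you flag as the only delicate point --- that for $x\in U$ the representation $\pi_x$ of $\Cs{\maG}$ is the canonical extension (via \cite[Proposition~2.10.4]{Dixmier}) of the nondegenerate regular representation of the ideal $\Cs{\maG_U}$, while for $x\in F$ it factors through $\rho_F$ --- is indeed the only thing to check; the paper passes over it silently when it writes $\maR(\maG) = \maR(\maG_U)\cup\maR(\maG_F)$, so your proof is, if anything, more complete on that point.
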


\begin{proof}
Let $I := \Cs{\maG_U} = \rCs{\maG_U}$, $A := \Cs{\maG}$. Then $A/I
\simeq \Cs{\maG_F} \simeq \rCs{\maG_F}$ by Proposition
\ref{renault.exact}(ii).  Since $\maG_F$ and $\maG_U$ are metrically
amenable, it follows that $\maG$ is metrically amenable as well, by
Proposition \ref{renault.exact}(iii).  Proposition \ref{prop.ideal}
then gives that $\maR(\maG) = \maR(\maG_U) \cup \maR(\maG_F)$ is an
exhaustive family of representations of $\Cs{\maG} \simeq \rCs{\maG}$,
that is, $\maG$ has Exel's strong property, as claimed.
\end{proof}

More generally, we can consider groupoids given by a filtration.

\begin{corollary}\label{cor.sufficient.filtration}
Let $\maG\tto M$ be a second countable groupoid and assume that $U_i$
are open, $\maG$-invariant subsets of $M$, such that $ \emptyset \ede
U_{-1} \subset U_0 \subset \ldots \subset U_{i-1} \subset U_i \subset
\ldots \subset U_N := M \,, $ and for each $S := U_{i} \smallsetminus
U_{i-1}$, $\maG_S$ has Exel's strong property. Then $\maG$ also has
Exel's strong property.
\end{corollary}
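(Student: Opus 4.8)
The plan is to argue by induction on the length $N$ of the filtration, using Corollary \ref{cor.sufficient} as the inductive step. The base case $N = 0$ is immediate: there $U_{-1} = \emptyset$ and $U_0 = M$, so $M = U_0 \smallsetminus U_{-1}$ is itself the single stratum $S$, and the hypothesis says directly that $\maG = \maG_M$ has Exel's strong property.

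For the inductive step, I would assume the statement for all filtrations of length at most $N-1$ and consider a filtration of length $N$ as in the statement. First set $U \ede U_{N-1}$ and $F \ede M \smallsetminus U = U_N \smallsetminus U_{N-1}$. Then $U$ is open and $\maG$-invariant, and $F$ is precisely the top stratum $S = U_N \smallsetminus U_{N-1}$, so $\maG_F$ has Exel's strong property by hypothesis. It then remains to show that $\maG_U = \maG_{U_{N-1}}$ also has Exel's strong property, after which Corollary \ref{cor.sufficient} applies and yields that $\maG$ has Exel's strong property. Here I would note that $\maG_U = d^{-1}(U)$ is an open subset of the second countable space $\maG$, hence itself second countable, so the hypotheses of Corollary \ref{cor.sufficient} are met.

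To see that $\maG_U$ has Exel's strong property, I would apply the inductive hypothesis to the groupoid $\maG_{U_{N-1}}$ equipped with the truncated filtration $\emptyset = U_{-1} \subset U_0 \subset \ldots \subset U_{N-1}$. Each $U_i$ with $i \le N-1$ is open in $M$ and $\maG$-invariant, hence open in $U_{N-1}$ and $\maG_{U_{N-1}}$-invariant, so this is a valid filtration of length $N-1$. The key compatibility is that reduction is transitive: for $S \subseteq U_{N-1}$ one has $(\maG_{U_{N-1}})_S = \maG_S$, since both consist of the arrows $g$ with $d(g), r(g) \in S$. Thus the strata of the truncated filtration are exactly the groupoids $\maG_{S_i}$, $S_i = U_i \smallsetminus U_{i-1}$, $i = 0, \ldots, N-1$, each of which has Exel's strong property by assumption. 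The inductive hypothesis then gives that $\maG_{U_{N-1}}$ has Exel's strong property, completing the induction.

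The argument is essentially bookkeeping once Corollary \ref{cor.sufficient} is available; the only points that require care are the verification that the truncated filtration is again a filtration by open invariant subsets (so that the inductive hypothesis genuinely applies) and the transitivity of reduction identifying $(\maG_{U_{N-1}})_{S_i}$ with $\maG_{S_i}$. Both are routine, so I do not anticipate a substantive obstacle beyond organizing the induction correctly.
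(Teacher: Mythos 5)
Your proposal is correct and follows essentially the same route as the paper: the paper's own proof is precisely an induction on $N$ using Corollary \ref{cor.sufficient} as the inductive step, applied with $U = U_{N-1}$ and $F = M \smallsetminus U_{N-1}$. The only differences are cosmetic — the paper starts the induction at $N=1$ rather than $N=0$ and leaves implicit the routine checks (second countability of $\maG_{U_{N-1}}$, validity of the truncated filtration, and transitivity of reduction) that you spell out.
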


\begin{proof}
Corollary \ref{cor.sufficient} yields the result by induction, as
follows. If $N=1$, it reduces to Corollary \ref{cor.sufficient}.
Assuming that the result holds for a filtration in $N-1$ sets, then
$\maG_{U_{N-1}}$ has Exel's strong property. Since by assumption
$\maG_{M\setminus U_{N-1}}$ also has, the result follows again by
Corollary \ref{cor.sufficient}.
\end{proof}

Here is a simple, but very important example of a class of Lie
groupoids $\maG$ for which the set $\maR(\maG)$ of regular
representations is an exhaustive set of representations of
$\Cs{\maG}$.  The idea of the proof below is that, locally, the
$C^*$-algebra of $\maG$ is of the form $C^*(G) \otimes \maK$, where
$\maK$ are the compact operators on the (typical) fiber of $f$. We
found it convenient to formalize the proof of this well-known result
using Morita equivalence of groupoids.

\begin{proposition} \label{prop.exhaustive}
Let $\maH \tto L$ be a bundle of Lie groups with fiber $G$ and let $f
: M \to L$ be a tame submersion. Define $\maG := f\pullback
(\maH)$. Then $\maG$ is a Lie groupoid with Exel's property. If $G$ is
amenable, then $\maG$ has Exel's strong property.
\end{proposition}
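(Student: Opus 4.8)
The plan is to first recognize $\maG$ as a Lie groupoid and then to transport the computation of the supports of its regular representations to the bundle of Lie groups $\maH$ by means of a Morita equivalence of groupoids, where the computation becomes transparent. That $\maG = f\pullback(\maH)$ is a Lie groupoid is immediate from Example \ref{ex.pullback}, since $f$ is a tame submersion. As $f$ is a submersion, its image $f(M)$ is open in $L$ by Lemma \ref{lemma.corners}, and replacing $L$ by $f(M)$ and $\maH$ by its restriction---still a bundle of Lie groups with fibre $G$---alters neither $\maG$ nor the regular representations $\maR(\maG) = \{\pi_m\}_{m\in M}$. Hence I may assume $f$ surjective.

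Next I would invoke the Morita equivalence $\maG = f\pullback(\maH) \sim \maH$ valid for a surjective submersion $f$ (see \cite{HigginsMackenzie1, HigginsMackenzie2}); it specializes, for $\maH = L$, to the familiar equivalence $M\times_L M \sim L$. At the level of operator algebras this yields Morita equivalences $\Cs{\maG} \sim \Cs{\maH}$ and $\rCs{\maG} \sim \rCs{\maH}$. The key is to track the regular representations. For $m\in M$ one has $\maG_m \cong f^{-1}(f(m)) \times \maH_{f(m)}$, so $L^2(\maG_m) \cong L^2(f^{-1}(f(m))) \otimes L^2(\maH_{f(m)})$ and, locally, $\pi_m \cong \maK \otimes \pi^{\maH}_{f(m)}$---exactly the form $\Cs{G}\otimes\maK$ anticipated above. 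More precisely, $\pi_m$ is the representation induced from the regular representation $\pi^{\maH}_{f(m)}$ through the equivalence bimodule. Since induction across a Morita equivalence preserves supports under the canonical homeomorphism of primitive ideal spectra (recalled before Proposition \ref{prop.Morita}), $\supp(\pi_m)$ corresponds to $\supp(\pi^{\maH}_{f(m)})$. Thus $\maR(\maG)$ is exhaustive for $\Cs{\maG}$ (resp. $\rCs{\maG}$) if and only if $\maR(\maH)$ is exhaustive for $\Cs{\maH}$ (resp. $\rCs{\maH}$), and it suffices to treat $\maH$.

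For the bundle of Lie groups $\maH \tto L$ one has $d = r$, so $C_0(L)$ acts centrally and $\Cs{\maH}$ (resp. $\rCs{\maH}$) is a $C_0(L)$-algebra with fibre $\Cs{G_\ell} \cong \Cs{G}$ (resp. $\rCs{G}$) over $\ell$. As for any $C_0(L)$-algebra, there is a continuous map $\Prim(\Cs{\maH}) \to L$ whose fibre over $\ell$ is $\Prim(\Cs{G})$, and the regular representation $\pi^{\maH}_\ell$ factors as evaluation $\mathrm{ev}_\ell$ followed by the group regular representation $\lambda_G$ of $G_\ell \cong G$. A short computation then shows that a primitive ideal lying over $\ell_0$ belongs to $\supp(\pi^{\maH}_\ell)$ precisely when $\ell_0 = \ell$ and its image in $\Prim(\Cs{G})$ lies in $\supp(\lambda_G)$; hence $\bigcup_\ell \supp(\pi^{\maH}_\ell)$ exhausts the spectrum if and only if $\supp(\lambda_G) = \Prim(\Cs{G})$, that is, if and only if $\ker(\lambda_G) = 0$.

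This last identity is where amenability enters and is the crux. On the reduced algebra $\lambda_G$ is faithful by construction, so $\ker(\lambda_G) = 0$ automatically; thus $\maR(\maH)$, and hence $\maR(\maG)$, is exhaustive for the reduced $C^*$-algebra---this is Exel's property, for every $G$. On the full algebra $\lambda_G$ is faithful exactly when $G$ is amenable, so for amenable $G$ the family $\maR(\maH)$ is exhaustive for $\Cs{\maH}$ and $\maG$ has Exel's strong property. I expect the main obstacle to be the second paragraph: establishing that the groupoid regular representation $\pi_m$ really is the one induced from $\pi^{\maH}_{f(m)}$ by the equivalence bimodule, so that supports match under the spectral homeomorphism; the $C_0(L)$-fibering of $\Prim(\Cs{\maH})$ used in the third paragraph is by contrast a standard structural input.
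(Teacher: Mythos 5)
Your proposal is correct and follows essentially the same route as the paper's proof: a groupoid Morita equivalence (implemented by the bimodule $X = M\times_L \maH$) transfers exhaustivity of $\maR(\maG)$ to exhaustivity of $\maR(\maH)$, and for the bundle of Lie groups the regular representations factor through the evaluations at points of $L$, so everything reduces to faithfulness of the regular representation of $G$ on $\rCs{G}$ (automatic) and on $\Cs{G}$ (equivalent to amenability, which the paper phrases instead as metrical amenability of $\maH$ and $\maG$ combined with the equivalence ``strong Exel = Exel $+$ metrically amenable''). Your two refinements---replacing $L$ by the open set $f(M)$ so that the equivalence with $\maH$ is literally correct, and explicitly identifying $\pi_m$ with the representation induced from $\pi^{\maH}_{f(m)}$ so that supports match under the Rieffel homeomorphism---are small but genuine tightenings of points the paper leaves implicit.
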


\begin{proof}
We have already discussed that $\maG := f\pullback (\maH)$ is a Lie
groupoid (because $f$ is a tame submersion). We endow $\maG$ with one
of the standard Haar measures coming from the Lie groupoid structure.
Let
\begin{equation*}
  X \seq M \times_L \maH \ede \{ (m, \gamma) \in M \times \maH\vert
  f(m) = r(\gamma)\}\,.
\end{equation*}
Then $X$ defines a Morita equivalence between $\maG$ and $\maH$, and
hence a Morita equivalence between their full and reduced
$C^*$-algebras of these groupoids, by the classical results of
\cite{MRW87,SW12}. See also \cite{Tu04}.  In particular,
$\Prim(\Cs{\maG})$ and $\Prim(\Cs{\maH})$ are homeomorphic by natural
homeomorphisms \cite{rieffelInducedCstar}. The same is true for
$\Prim(\rCs{\maG})$ and $\Prim(\rCs{\maH})$.

All irreducible representations of $\rCs{\maH}$ factor through an
evaluation morphism $e_x : \rCs{\maH} \to \rCs{G_x} \simeq \rCs{G}$,
for some $x \in M$, where $G_x$ is the fiber of $\maH \to M$ above
$x$. We have that the regular representation $\pi_x$ is obtained from
the regular representation of $G_x$ via $e_x$, the evaluation at
$x$. Hence $\maG$ has Exel's property.

Let us assume now that $G$ is amenable. Hence the fibers of $\maH \to
M$ are amenable, and therefore $\maH$ is metrically amenable. Hence
$\maG$ is also metrically amenable. Since $\maG$ was already proved to
have Exel's property, it follows that it has also Exel's strong
property.
\end{proof}

We notice that if $\maG$ is (isomorphic to a groupoid) as in the
proposition above, then $\maH \tto L$ and $f : M \to L$ are uniquely
determined, up to equivalence.  Indeed, $L$ is diffeomorphic to the
set of orbits of $\maG$ (acting on $M$), $f$ becomes the quotient map
$M \to M/\maG$ and $\maH$ is obtained from the isotropy groupoid of
$\maG$.

The above results suggest the following definition.

\begin{definition}\label{def.bfb}
Let $\maH$ be a groupoid with units $M$. We say that $\maH$ is a {\em
  \ssub\ groupoid} with filtration $(U_i)$ if $U_i$ are open,
$\maH$-invariant subsets of $M$,
 \begin{equation*}
  \emptyset \ede U_{-1} \subset U_0 \subset \ldots \subset U_{i-1}
  \subset U_i \subset \ldots \subset U_N := M \,,
 \end{equation*}
and each $S := U_{i} \smallsetminus U_{i-1}$ is a manifold, possibly
with corners, such that there exist a Lie group bundle $G_S \to B_S$
and a tame submersion $f_S : S \to B_S$ of manifolds with corners such
that
\begin{equation*}
  \maH_S\, \simeq \, f_S \pullback (G_S) \,.
\end{equation*}
A groupoid $\maH$ will be called a {\em \ssub} groupoid if it is a
{\em \ssub} groupoid for some filtration.
\end{definition}

\begin{remark}\label{rem.restriction}
It is clear from the definition that if $\maH$ is a \ssub\ groupoid,
then, with the notation of the definition, each $\maH_{U_k
  \smallsetminus U_j}$, $j < k$, is also a \ssub\ groupoid. We have to
notice, however, that $\maH_{U_k \smallsetminus U_j}$ may not be a Lie
groupoid, even if $\maH$ is a Lie groupoid, because $U_k
\smallsetminus U_j$ may not be a manifold (even if it is a union of
manifolds).
\end{remark}

As we shall see, the class of \ssub\ groupoids is large enough to
comprise many classes of groupoids and operators that arise in
practice. Combining Corollary \ref{cor.sufficient.filtration} with
Proposition \ref{prop.exhaustive}, we now obtain the following result.

\begin{theorem}\label{thm.lemma.Exel}
Let $\maG \tto M$ be a \ssub\ Lie groupoid and assume that all the
isotropy groups $\maG_x^x$, $x \in M$, are amenable. Then the set
$\maR(\maG)$ of regular representations of $\maG$ is an exhaustive
family of representations of $\Cs{\maG}$.  In particular, $\maG$ is
metrically amenable and has Exel's property.
\end{theorem}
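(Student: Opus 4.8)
The plan is to combine the two main tools developed in this section: Proposition~\ref{prop.exhaustive}, which handles a single fibered pull-back of a bundle of amenable Lie groups, and Corollary~\ref{cor.sufficient.filtration}, which assembles Exel's strong property along a filtration. The strategy is essentially an induction over the filtration witnessing that $\maG$ is a \ssub\ groupoid, where Proposition~\ref{prop.exhaustive} provides the base case for each stratum and Corollary~\ref{cor.sufficient.filtration} glues the pieces.

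First I would invoke the definition of a \ssub\ groupoid (Definition~\ref{def.bfb}) to fix a filtration $\emptyset = U_{-1} \subset U_0 \subset \ldots \subset U_N = M$ by open, $\maG$-invariant subsets such that each stratum $S := U_i \smallsetminus U_{i-1}$ is a manifold (possibly with corners) and $\maG_S \simeq f_S\pullback(G_S)$ for some bundle of Lie groups $G_S \to B_S$ and tame submersion $f_S : S \to B_S$. The next step is to verify that each $\maG_S$ has Exel's strong property. Here I would apply Proposition~\ref{prop.exhaustive} to each $\maG_S = f_S\pullback(G_S)$; that proposition gives Exel's property unconditionally and Exel's strong property provided the fiber group is amenable. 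The key point is to check that the fibers of $G_S \to B_S$ are amenable, and this is exactly where the hypothesis that all isotropy groups $\maG_x^x$ are amenable is used: for a fibered pull-back, the isotropy group of $\maG_S$ at a point $x \in S$ is isomorphic to the fiber of $G_S$ over $f_S(x)$ (indeed $\maG_x^x = (f_S\pullback(G_S))_x^x \cong (G_S)_{f_S(x)}$, since the pair-groupoid factors $(x, g, x)$ force the $M$-components to agree). Thus amenability of the isotropy groups $\maG_x^x$ translates directly into amenability of the fibers of $G_S$, so Proposition~\ref{prop.exhaustive} yields Exel's strong property for $\maG_S$.

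With Exel's strong property established for every stratum $S = U_i \smallsetminus U_{i-1}$, I would then apply Corollary~\ref{cor.sufficient.filtration} to the given filtration to conclude that $\maG$ itself has Exel's strong property; note the second countability hypothesis needed there is part of the standing assumptions on our Lie groupoids. Unwinding the definition of Exel's strong property gives exactly that $\maR(\maG)$ is an exhaustive family of representations of $\Cs{\maG}$, which is the main assertion. Finally, recalling from the discussion following the definition of Exel's strong property that this property holds if and only if $\maG$ has Exel's property \emph{and} is metrically amenable, the stated consequences follow immediately: $\maG$ is metrically amenable and has Exel's (ordinary) property.

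The main obstacle, and the only step requiring genuine care rather than bookkeeping, is the identification of the isotropy groups of the fibered pull-back $\maG_S$ with the fibers of the Lie group bundle $G_S$, since this is the precise mechanism by which the global amenability hypothesis feeds into the fiberwise amenability needed by Proposition~\ref{prop.exhaustive}. One subtlety to watch is flagged in Remark~\ref{rem.restriction}: the reductions $\maG_S$ need not themselves be Lie groupoids (since $S$ may fail to be a manifold as a whole), but Proposition~\ref{prop.exhaustive} and Corollary~\ref{cor.sufficient.filtration} are stated for the relevant (locally compact, second countable) groupoid setting, so the argument goes through at the level of the associated $C^*$-algebras without needing the Lie structure on each stratum.
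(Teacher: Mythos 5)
Your proposal is correct and follows essentially the same route as the paper's own proof: identify the isotropy groups $\maG_x^x$, $x \in S$, with the fibers of $G_S \to B_S$, apply Proposition~\ref{prop.exhaustive} to get Exel's strong property on each stratum, and then assemble via Corollary~\ref{cor.sufficient.filtration}. Your added care about Remark~\ref{rem.restriction} (the strata need not carry Lie groupoid structures) is a sensible precaution that the paper leaves implicit.
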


\begin{proof}
Let us use the notation of Definition \ref{def.bfb}.  Indeed then, the
isotropy group $\maG_x^x$ is the fiber of $G_S \to B_S$ above
$f_S(x)$, for $x \in S$. Thus, by Proposition \ref{prop.exhaustive},
all the groupoids $\maG_S$ are metrically amenable and have Exel's
property.  Using Corollary \ref{cor.sufficient.filtration}, we obtain
that $\maG$ has Exel's strong property, that is, that it is metrically
amenable and has Exel's property.
\end{proof}

In the next section, we will see how this property plays a role in
establishing Fredholm conditions for operators on groupoids.

\begin{remark}
Let $\maH$ be a \ssub\ Lie groupoid as in Definition \ref{def.bfb}.
In view of the remark \ref{rem.disjoint}, we have then the following
disjoint union decomposition:
\begin{equation*}
 \maH \seq \sqcup_{S}\, f_S \pullback (G_S) \seq \sqcup_{S}\, S
 \times_B S \times_B G_S\,,
\end{equation*}
where $\times_B$ is the fibered product over the base (that is, over
$B_S$).  We notice also that two regular representations $\pi_x$ and
$\pi_y$ are unitarily equivalent if there is $g \in \maG$ such that
$d(g) = x$ and $r(g) = y$. For a \ssub\ groupoid, this is then the
case exactly if $x, y \in S:=U_{i-1} \smallsetminus U_i$ {\em and}
$f_S(x) = f_S(y) = b \in B_s$, in which case, these two
representations act, up to a unitary equivalence, on the set of square
integrable functions on the space $Z_b:=f_S^{-1}(b) \times
(G_S)_b$. We denote by $\pi_b$ the corresponding representation on
$L^2(Z_b)$, $b \in B_S$.
\end{remark}

\section{Fredholm conditions}\label{sec4}
\label{sec.fredholm}

We now study Fredholm conditions for operators in algebras
$\mathbfPsi$ containing a reduced groupoid $C\sp{\ast}$-algebra
$C\sp{\ast}_r(\maG)$ as an essential ideal. (Recall that an ideal of
an algebra is {\em essential} if its annihilator vanishes.) The
groupoids for which we obtain the kind of Fredholm conditions that we
want (the kind that are typically used in practice) will be called
``Fredholm groupoids.'' They are introduced and discussed next.
Examples of Fredholm groupoids will be provided in the next section.

\emph{Throughout the rest of this paper, $\maG \tto M$ will denote a
  second countable Lie groupoid, which is hence a locally compact
  groupoid and the choice of a metric on $A(\maG)$ gives rise to a
  Haar system $\lambda_x$. Also, recall that all our Lie groupoids are
  assumed to be Hausdorff.}

\subsection{Fredholm groupoids and their characterization}
We now introduce Fredholm groupoids and give a first characterization
of these groupoids.  Recall that $\pi_x \colon C^{\ast}(\maG) \to
\maL(L^2(\maG^x, \lambda_x))$ denotes be the regular representation on
$\maG_x$, given by left convolution, $\pi_x(\phi) \psi := \phi * \psi$
(Definition \ref{def.regular}). We shall use the following notation
throughout the rest of the paper.

\begin{notation}\label{notation.U}
\normalfont{Let us assume that $U \subset M$ is an open,
  $\maG$-invariant subset with $\maG_U \simeq U \times U$ (the pair
  groupoid, see Example \ref{ex.pair}). For any $x_0\in U$, the range
  map then defines a bijection $r : \maG_{x_0} \to U$ and hence a
  measure $\mu$ on $U$ corresponding to $\lambda_{x_0}$. This measure
  does not depend on the choice of $x_0 \in U$ and leads to isometries
  $L^2(\maG_{x_0}, \lambda_{x_0}) \simeq L^2(U; \mu)$ that commute
  with the action of $\maG$.  We then denote by $\pi_0$ the
  corresponding representation of $C^{\ast}(\maG)$ on $L^2(U; \mu)$.
  It is often called the {\em vector representation} of
  $C^{\ast}(\maG)$.  We shall usually write $L^2(U) := L^2(U; \mu)$.
  The representation $\pi_0$ then defines an isomorphism
  $C_r\sp{\ast}(\maG_U) \simeq \pi_0(C_r\sp{\ast}(\maG_U) = \maK$, the
  algebra of compact operators on $L\sp{2}(U)$.}
\end{notation}

\begin{remark}
An important remark is that, since we only consider \emph{Hausdorff}
Lie groupoids, the vector representation $\pi_0$ is injective, by a
result of Khoshkam and Skandalis \cite{KSkandalis}.  In particular, we
have that $1 + \pi_0(a)$ is invertible in $\pi_0(\rCs{\maG})/\maK$ if,
and only if, $1+ a$ is invertible in $\rCs{\maG}/\rCs{\maG_U} $.  We
shall thus identify $C^{\ast}_r(\maG)$ with its image under $\pi_0$,
that is, with a class of operators on $L^2(U)$, without further
comment.
\end{remark}

In the following definition, we shall use that $\rCs{\maG_U}\subset
\ker \pi_x$, $x\notin U$, by definition, and hence that $\pi_x$
descends to the quotient algebra.

\begin{definition}\label{def-fredholm}
We say that $\maG$ is a {\em Fredholm Lie groupoid} if:
\begin{enumerate}[(i)]
\item There is an open, {dense}, $\maG$-invariant subset $U\subset M$
  such that $\maG_U\simeq U\times U$.
\item Given $a \in C^{\ast}_r(\maG)$, we have that $1 + a$ is Fredholm
  if, and only if, all $1 + \pi_x(a)$, $x \in F \smallsetminus U$, are
  invertible (see \ref{notation.U} for notation).
\end{enumerate}
The set $F := M \smallsetminus U$ will be called the set of {\em
  boundary units} of $\maG$ and a set $U$ as in this definition will
be called a manifold with {\em amenable ends}.
\end{definition}

For a Fredholm Lie groupoid $\maG \tto M$, we shall always denote by
$U \subset M$ the $\maG$-invariant open subset from the definition of
a Lie Fredholm groupoid. {Since $U$ is a dense orbit,} it is uniquely
determined by $\maG$. Hence, $F := M \smallsetminus U$ is closed and
$\maG$-invariant.

We shall need the following result in the matricial case.

\begin{proposition}\label{prop.bundles}
 Let us assume that $\maG$ is a Fredholm Lie groupoid, that $M$ is
 compact, and that $e \in M_n(\maC(M))$ is a projection.
\begin{enumerate}[(i)]
 \item The set of representations $\pi_x$, $x\in F$ defines a strictly
   spectral family of representations of $eM_n
   \big(\rCs{\maG}/\rCs{\maG_U} \big) e$.
 \item For any $a \in eM_n(C^{\ast}_r(\maG))e$, we have that $1 +
   \pi_0(a)$ is Fredholm if, and only if, all $1 + \pi_x(a)$, $x \in
   F:=M \smallsetminus U$, are invertible (see \ref{notation.U} for
   notation).
\end{enumerate}
\end{proposition}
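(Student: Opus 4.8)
The plan is to prove part (i) first and then derive part (ii) from it almost immediately. Write $B \ede \rCs{\maG}/\rCs{\maG_U}$. Since $\maG_U \simeq U \times U$ is a pair groupoid, Notation \ref{notation.U} identifies $\pi_0(\rCs{\maG_U})$ with the ideal $\maK$ of compact operators on $L^2(U)$, and, because $\rCs{\maG_U} \subset \ker \pi_x$ for $x \notin U$, each regular representation $\pi_x$ with $x \in F$ descends to a nondegenerate representation of $B$. Thus the family $\{\pi_x : x \in F\}$ appearing in the statement is a family of nondegenerate representations of $B$, and the content of (i) is that this family, suitably amplified and compressed by $e$, is strictly spectral on $eM_n(B)e$.

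For the base case $n = 1$, $e = 1$, I would first upgrade the Fredholm hypothesis to a statement about $B$. By the Khoshkam--Skandalis injectivity of the vector representation (the Remark following Notation \ref{notation.U}), the homomorphism $B^+ \to \maL(L^2(U))/\maK$ induced by $\pi_0$ is an injective unital $*$-homomorphism; hence $1 + \pi_0(a)$ is Fredholm, \ie invertible in the Calkin algebra, if and only if $1 + a$ is invertible in $B^+$. Combining this with Definition \ref{def-fredholm}(ii) gives exactly: $1 + a$ is invertible in $B^+$ if and only if $1 + \pi_x(a)$ is invertible for every $x \in F$. By the unitalized reformulation of strict spectrality recorded after Definition \ref{def.ip}, this is precisely the statement that $\{\pi_x : x \in F\}$ is strictly spectral for $B$. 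Since $\maG$ is second countable, $\rCs{\maG}$, and hence its quotient $B$, is separable; as the $\pi_x$ are nondegenerate, Theorem \ref{thm.exhaustive} lets me promote strict spectrality to exhaustiveness of $\{\pi_x : x \in F\}$ on $B$.

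The passage to matrices and corners is then formal. By Proposition \ref{prop.Morita}(i) the amplified family $\{\pi_x \otimes 1\}$ is exhaustive on $M_n(B)$. The projection $e$ lives in $M_n(\maC(M))$, whose image in $M(\rCs{\maG})$ descends to a multiplier projection $\bar e \in M_n(\maC(F)) \subset M_n(M(B))$, so that $eM_n(B)e \ede \bar e\, M_n(B)\, \bar e$ is a well-defined hereditary subalgebra. Proposition \ref{prop.Morita}(iii) now yields an exhaustive family on $eM_n(B)e$; I expect the one point that needs care here is that $e$ is only a \emph{multiplier} projection, whereas Proposition \ref{prop.Morita}(iii) is stated for $e \in M_n(B)$. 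This is harmless: its proof merely uses that $eM_n(B)e$ is Morita equivalent to the ideal $\overline{M_n(B)\,e\,M_n(B)}$ of $M_n(B)$ together with Proposition \ref{prop.Morita}(ii), and both facts hold verbatim for a multiplier projection. A final application of Theorem \ref{thm.exhaustive} converts exhaustiveness back into strict spectrality, proving (i).

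Finally, for part (ii) let $H \ede e\,L^2(U)^n$, let $A \ede eM_n(\rCs{\maG})e$, and let $I \ede eM_n(\rCs{\maG_U})e \simeq \maK(H)$, so that $A/I \simeq eM_n(B)e$. Exactly as in the base case, injectivity of the amplified and compressed vector representation shows that $1 + \pi_0(a)$ is Fredholm on $H$ if and only if $1 + a$ is invertible in the unitalization of $A/I \simeq eM_n(B)e$. By part (i) the family $\{\pi_x : x \in F\}$ is strictly spectral on $eM_n(B)e$, so $1 + a$ is invertible there if and only if $1 + \pi_x(a)$ is invertible for every $x \in F$. Chaining these two equivalences gives precisely the assertion of (ii). The main obstacle throughout is the corner step of part (i) with a multiplier projection; everything else is a direct reading of the Fredholm hypothesis through the injectivity of the vector representation and the exhaustive/strictly-spectral dictionary.
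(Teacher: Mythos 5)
Your proof is correct and follows essentially the same route as the paper's: reinterpret Definition \ref{def-fredholm}(ii) via Atkinson's theorem (and injectivity of the vector representation) as strict spectrality of $\{\pi_x\}_{x \in F}$ on $\rCs{\maG}/\rCs{\maG_U}$, pass to exhaustiveness using separability and Theorem \ref{thm.exhaustive}, handle matrices and the corner with Proposition \ref{prop.Morita}, and deduce (ii) by Atkinson's theorem again. The one place you go beyond the paper is in flagging and repairing the fact that $e \in M_n(\maC(M))$ is only a \emph{multiplier} projection, whereas Proposition \ref{prop.Morita}(iii) is stated for $e \in M_n(A)$ --- a minor gap in the paper's own appeal to that proposition, which your Morita-equivalence argument for hereditary corners closes.
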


\begin{proof}
By Atkinson's theorem (which states that an operator is Fredholm if,
and only if, it is invertible modulo the compacts) and by the
definition of strictly spectral families of representations in the
non-unital case, one can see that condition (ii) in the definition of
Fredhom groupoid is equivalent to the fact that $\maR(\maG_F)$ defines
a strictly spectral family of representations of the quotient
$\rCs{\maG}/\maK \simeq \rCs{\maG}/\rCs{\maG_U} $.  Since $\maG$ is
second countable, its associated $C^*$-algebras are separable, hence
the set $\{\pi_x \vert x \in F\}$ is strictly spectral if, and only
if, it is exhaustive, by Theorem \ref{thm.exhaustive}.  The first part
is then a consequence of Proposition \ref{prop.Morita}.

The second part follows again from the definition of strictly spectral
families of representations in the non-unital case, from the fact that
$\rCs{\maG_U} \simeq \maK(L^2(U))$, and from Atkinson's theorem.
\end{proof}

In the next results, we give an abstract characterization of Lie
Fredholm groupoids, using some of the ideas of the previous
section. We obtain in particular conditions that will make it simpler
to check whether a given Lie groupoid is Fredholm. Recall that, in
this paper, all groupoids are Hausdorff.

\begin{theorem}  \label{thm.Fredholm.Cond}
Assume $\maG$ is a Lie groupoid. If $\maG$ is Fredholm, then following
two conditions are satisfied:
\begin{enumerate}[(i)]
\item The canonical projection $C_r\sp{\ast}(\maG) \to
  C_r\sp{\ast}(\maG_F)$ induces an isomorphism
  \begin{equation*}
     C_r\sp{\ast}(\maG)/C_r\sp{\ast}(\maG_U) \, \simeq \,
     C_r\sp{\ast}(\maG_F) \,, \quad F:=M \smallsetminus U \,.
  \end{equation*}
\item $\maG_F$ has Exel's property.
\end{enumerate}
\end{theorem}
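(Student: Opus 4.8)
The plan is to reduce both statements to the abstract theory of strictly spectral and exhaustive families from Section~\ref{sec.exhaustive}, the bridge being a translation of the analytic Fredholm condition of Definition~\ref{def-fredholm} into an algebraic statement about the quotient $B := \rCs{\maG}/\rCs{\maG_U}$. First I would record the two structural inputs. By Notation~\ref{notation.U} the vector representation $\pi_0$ identifies $\rCs{\maG_U}$ with the ideal $\maK$ of compact operators on $L^2(U)$, and by the Khoshkam--Skandalis result recalled in the remark following that notation, $\pi_0$ is faithful on $\rCs{\maG}$; thus I regard $\rCs{\maG}$ as a $C^*$-subalgebra of $\maL(L^2(U))$ containing $\maK = \rCs{\maG_U}$ as an ideal. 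Since $\rCs{\maG_U} \subset \ker\pi_x$ for every $x \in F := M\smallsetminus U$, each $\pi_x$, $x\in F$, descends to a representation $\bar\pi_x$ of $B$.

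The key step is to translate condition (ii) of Definition~\ref{def-fredholm}. For $a \in \rCs{\maG}$, Atkinson's theorem says $1+\pi_0(a)$ is Fredholm if, and only if, it is invertible in the Calkin algebra $\maL(L^2(U))/\maK$; by spectral permanence for $C^*$-algebras this is equivalent to invertibility of $1+a$ in $B^+ = (\rCs{\maG}/\rCs{\maG_U})^+$. Hence the Fredholm condition reads exactly: $1+a$ is invertible in $B^+$ if, and only if, $1+\bar\pi_x(a)$ is invertible for all $x \in F$. In the language of Definition~\ref{def.ip} (non-unital case) this says the family $\{\bar\pi_x : x\in F\}$ is \emph{strictly spectral} on $B$. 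As the $\bar\pi_x$ are non-degenerate, Theorem~\ref{thm.exhaustive} upgrades this to \emph{strictly norming}.

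For (i), I would use the canonical surjection $\bar\rho : B \to \rCs{\maG_F}$ induced by $(\rho_F)_r$ of Proposition~\ref{renault.exact}, which kills $\rCs{\maG_U}$. Because $F$ is $\maG$-invariant one has $\maG_x \subset \maG_F$ for $x\in F$, so $\bar\pi_x = \pi_x^F \circ \bar\rho$, where $\pi_x^F \in \maR(\maG_F)$ is the regular representation of $\maG_F$ and $\sup_{x\in F}\|\pi_x^F(c)\|$ is precisely the reduced norm on $\rCs{\maG_F}$. Combining $\sup_{x}\|\bar\pi_x(b)\| = \|\bar\rho(b)\|$ with the strict norming of $\{\bar\pi_x\}$ on $B$ forces $\|\bar\rho(b)\| = \|b\|$, so $\bar\rho$ is isometric and hence an isomorphism, giving (i). For (ii), transporting the strictly spectral family $\{\bar\pi_x\}$ across the isomorphism $\bar\rho$ shows $\maR(\maG_F)$ is strictly spectral on $\rCs{\maG_F}$; since $\maG$, and hence $\maG_F$, is second countable, $\rCs{\maG_F}$ is separable, and the converse half of Theorem~\ref{thm.exhaustive} yields that $\maR(\maG_F)$ is exhaustive, that is, $\maG_F$ has Exel's property.

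The main obstacle is the translation step of the second paragraph: one must check carefully that invertibility of $1+a$ modulo the compacts on $L^2(U)$ coincides with invertibility in the abstract quotient $B^+$, which is where faithfulness of $\pi_0$, the identification $\rCs{\maG_U} = \maK$, and spectral permanence all enter simultaneously. Once this identification is in place, the remainder is a clean application of the machinery of Section~\ref{sec.exhaustive}, the same translation being used in the matricial setting of Proposition~\ref{prop.bundles}.
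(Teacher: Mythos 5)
Your proof is correct, but for part (i) it takes a genuinely different route from the paper's. The shared core is the translation step (Atkinson's theorem plus spectral permanence, via faithfulness of $\pi_0$), which the paper itself sets up in the remark following Notation \ref{notation.U} and uses both in Proposition \ref{prop.bundles} and in this proof; and your treatment of part (ii) --- transporting strict spectrality to $\rCs{\maG_F}$ through the isomorphism of (i), then using second countability/separability and Theorem \ref{thm.exhaustive} to upgrade to exhaustiveness --- is exactly the paper's. For part (i), however, the paper argues by contradiction with bare hands: assuming $\rCs{\maG_U} \neq \ker(p)$, it picks a self-adjoint $a \in \ker(p)\smallsetminus \rCs{\maG_U}$; the image of $a$ in $\ker(p)/\rCs{\maG_U}$ is a nonzero self-adjoint element, so $\lambda - a$ fails to be invertible there for some $0 \neq \lambda \in \RR$, and after rescaling $1-a$ is not invertible modulo $\rCs{\maG_U} \cong \maK$, hence not Fredholm --- yet $\pi_x(a)=0$ for all $x \in F$, so every limit operator $1 - \pi_x(a) = 1$ is invertible, contradicting Fredholmness. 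You instead deduce strict norming of $\{\bar\pi_x\}$ from strict spectrality via Theorem \ref{thm.exhaustive}, and combine it with the identity $\sup_{x\in F}\|\pi_x^F(\,\cdot\,)\|=\|\cdot\|_{\rCs{\maG_F}}$ (the very definition of the reduced norm) and the factorization $\bar\pi_x = \pi_x^F\circ\bar\rho$ to conclude that $\bar\rho$ is isometric, hence an isomorphism. Both arguments are sound. What the paper's buys is economy: part (i) needs only elementary spectral theory of self-adjoint elements and no non-degeneracy hypothesis, reserving the Roch/Nistor--Prudhon machinery for part (ii). What yours buys is a direct, quantitative conclusion --- isometry of $\bar\rho$ --- which makes transparent why it is the \emph{reduced} $C^*$-algebra of $\maG_F$ that must appear in condition (i); the cost is invoking the harder (norming) half of Theorem \ref{thm.exhaustive}, for which, as you correctly note, non-degeneracy of the representations $\bar\pi_x$ is needed.
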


(Recall that condition (ii) means that $\maR(\maG_F)$ is an exhaustive
set of representations of $C_r\sp{\ast}(\maG_F)$.)

\begin{proof}
Let us assume that $\maG$ is a Fredholm groupoid and check the two
conditions of the statement.

Let $p \colon C_r\sp{\ast}(\maG) \to C_r\sp{\ast}(\maG_F)$ be the
natural projection induced by restriction.  It is known that
$C_r\sp{\ast}(\maG_U) \subseteq \ker(p)$ (see the proof of
\cite[Prop.II.4.5 (a)]{renaultBook}).  To prove (i), we need to show
that we have equality $C_r\sp{\ast}(\maG_U) = \ker(p)$.  Let us
proceed by contradiction, that is, let us assume that
$C_r\sp{\ast}(\maG_U) \neq \ker(p)$.  Then we can choose $a = a\sp{*}
\in \ker(p) \smallsetminus C_r\sp{\ast}(\maG_U)$.  Note that since
$\pi_x= \pi_x\circ p$, $x\in F=M\setminus U$, we have $\ker p\subset
\ker \pi_x$, in particular, $1- \pi_x(a) = 1$ is invertible for all $x
\notin U$.  On the other hand, since $a$ is self-adjoint and non-zero,
in the quotient $\ker(p)/C_r\sp{\ast}(\maG_U)$, there is $0 \neq
\lambda \in \RR$ such that $\lambda -a$ is not invertible in
$\ker(p)/C_r\sp{\ast}(\maG_U)$.  By rescaling, we may assume $\lambda
= 1$ and thus $1 -a$ is not invertible in
$C^{\ast}_r(\maG)/C_r\sp{\ast}(\maG_U) \cong C^{\ast}_r(\maG)/\maK$.
We conclude that in this case $\maR(\maG_F)$ is not a strictly
spectral family of representations of $\rCs{\maG}/\rCs{\maG_U}$.  It
then follows that $1-a$ is not Fredholm, however $1- \pi_x(a) = 1$ is
invertible for all $x \notin U$. This is a contradiction.

The first part shows that there is an isomorphism
\begin{equation*}
  C_r^*(\maG_F) \simeq C^{\ast}_r(\maG)/C^{\ast}_r(\maG_U)\simeq
C^{\ast}_r(\maG)/\maK(L^2(U)).
\end{equation*}
By Fredholmness of $\maG$, we have that $\maR(\maG_F)$ defines a
strictly spectral family of representations of $C^{\ast}_r(\maG)/\maK
$, so $\maR(\maG_F)$ is also a strictly spectral family of
representations of $ C_r\sp{\ast}(\maG_F)$. Since $\maG$ is second
countable, it is exhaustive, by Theorem \ref{thm.exhaustive}, so that
$\maG_F$ has Exel's property. This proves (ii) and hence the direct
implication.
\end{proof}

We remark that we proved in fact that, if $\maR(\maG_F)$ defines a
strictly spectral family of representations of
$\rCs{\maG}/\rCs{\maG_U}$, then necessarily
$C_r\sp{\ast}(\maG)/C_r\sp{\ast}(\maG_U) \, \simeq \,
     C_r\sp{\ast}(\maG_F)$ and $\maG_F$ has Exel's property.

A stronger form of the converse of Theorem \ref{thm.Fredholm.Cond} is
contained in the following theorem.

\begin{theorem}\label{thm.main.converse}
Let $\maG \tto M$ be a Lie groupoid. Let us assume that $U \subset M$
is a dense $\maG$-invariant open subset such that $\maG_U \simeq U
\times U$. Also, let us assume that the two conditions of Theorem
\ref{thm.Fredholm.Cond} are satisfied. Then, for any unital
$C^{\ast}$-algebra $\mathbfPsi$ containing $M_n(C^{\ast}_r(\maG))$ as
an essential ideal and for any $a \in \mathbfPsi$, we have that
$\pi_0(a)$ if Fredholm if, and only if, the image of $a$ in
$\mathbfPsi/M_n(C^{\ast}_r(\maG))$ is invertible and all $\pi_x(a)$,
$x \in M \smallsetminus U$, are invertible.
\end{theorem}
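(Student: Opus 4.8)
The plan is to reduce the whole statement to a single application of Corollary \ref{cor.ideal}, after first converting the Fredholm condition into an invertibility condition. To begin, I would make precise what $\pi_0(a)$ and $\pi_x(a)$ mean for $a\in\mathbfPsi$: the representations $\pi_0\otimes 1$ and $\pi_x\otimes 1$ of $M_n(\rCs{\maG})$ are nondegenerate, and since $M_n(\rCs{\maG})$ is an ideal of $\mathbfPsi$, each extends uniquely to a representation of $\mathbfPsi$ by \cite[Prop.~2.10.4]{Dixmier}; I retain the same notation for the extensions. Because $\pi_0$ is faithful on $\rCs{\maG}$ (Khoshkam--Skandalis \cite{KSkandalis}, as recalled in the Remark after Notation \ref{notation.U}) and $M_n(\rCs{\maG})$ is an \emph{essential} ideal, the kernel of the extended $\pi_0$ meets $M_n(\rCs{\maG})$ trivially and hence vanishes. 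Thus $\pi_0$ identifies $\mathbfPsi$ with a unital $C^\ast$-subalgebra $B\subset\maL(L^2(U)^n)$ containing $\maK(L^2(U)^n)$ as an ideal, while carrying $M_n(\rCs{\maG_U})$ onto $\maK(L^2(U)^n)$ (Notation \ref{notation.U}).

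Next I would convert the Fredholm statement into invertibility in a quotient. By Atkinson's theorem, $\pi_0(a)$ is Fredholm if, and only if, it is invertible in the Calkin algebra $\maL(L^2(U)^n)/\maK(L^2(U)^n)$; since $B$ is unital with the same unit as $\maL(L^2(U)^n)$, spectral permanence for $C^\ast$-subalgebras shows this is equivalent to invertibility of $\pi_0(a)$ in $B/\maK(L^2(U)^n)$. As $\pi_0$ is faithful and sends $M_n(\rCs{\maG_U})$ onto $\maK(L^2(U)^n)$, it descends to an isomorphism $\mathbfPsi/M_n(\rCs{\maG_U})\cong B/\maK(L^2(U)^n)$. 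Therefore $\pi_0(a)$ is Fredholm if, and only if, the image of $a$ is invertible in $\mathbfPsi/M_n(\rCs{\maG_U})$.

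It then remains to characterize invertibility in $\tilde A := \mathbfPsi/M_n(\rCs{\maG_U})$, which I would do with Corollary \ref{cor.ideal}. Put $\tilde I := M_n(\rCs{\maG})/M_n(\rCs{\maG_U})$, an ideal of the unital algebra $\tilde A$; by condition (i) of Theorem \ref{thm.Fredholm.Cond} we have $\tilde I\cong M_n(\rCs{\maG_F})$, and the third isomorphism theorem gives $\tilde A/\tilde I\cong\mathbfPsi/M_n(\rCs{\maG})$. The representations $\pi_x$, $x\in F$, annihilate $\rCs{\maG_U}$, so $\{\pi_x\otimes 1\}_{x\in F}$ is a family of nondegenerate representations of $\tilde I$; condition (ii) says $\maR(\maG_F)$ is exhaustive, and transporting this through Proposition \ref{prop.Morita}(i) makes $\{\pi_x\otimes 1\}_{x\in F}$ exhaustive for $M_n(\rCs{\maG_F})\cong\tilde I$, hence strictly spectral by Theorem \ref{thm.exhaustive}. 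Corollary \ref{cor.ideal} now yields that $a$ is invertible in $\tilde A$ if, and only if, its image in $\tilde A/\tilde I\cong\mathbfPsi/M_n(\rCs{\maG})$ is invertible and $\pi_x(a)$ is invertible for every $x\in F=M\smallsetminus U$. Combining this with the previous paragraph gives precisely the asserted equivalence.

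The routine but delicate point, and the main place to be careful, is the bookkeeping of the various extensions. One must verify that the extension of $\pi_x\otimes 1$ from the ideal $M_n(\rCs{\maG})$ to $\mathbfPsi$ coincides with the representation of $\tilde A$ obtained by first descending $\pi_x\otimes 1$ to $\tilde I$ and then extending, and that the spectral-permanence step legitimately bridges ``Fredholm'' (invertibility in the Calkin algebra) and ``invertibility in $\mathbfPsi/M_n(\rCs{\maG_U})$.'' Both facts follow from the uniqueness of the extension of nondegenerate representations from ideals and from $B$ sharing its unit with $\maL(L^2(U)^n)$; once they are in place, the algebraic core of the argument is the single invocation of Corollary \ref{cor.ideal}.
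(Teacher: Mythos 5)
Your proposal is correct and follows essentially the same route as the paper's own proof: inject $\mathbfPsi$ via $\pi_0$ using faithfulness on the essential ideal, use Atkinson's theorem to convert Fredholmness of $\pi_0(a)$ into invertibility in $\mathbfPsi/M_n(\rCs{\maG_U})$, identify the ideal $M_n(\rCs{\maG})/M_n(\rCs{\maG_U})\cong M_n(\rCs{\maG_F})$ via condition (i), upgrade condition (ii) to a strictly spectral family via Theorem \ref{thm.exhaustive} and Proposition \ref{prop.Morita}, and conclude with Corollary \ref{cor.ideal}. The only (cosmetic) difference is that the paper reduces to $n=1$ at the outset while you carry the matrix amplification throughout, and you spell out the extension-of-representations and spectral-permanence bookkeeping that the paper leaves implicit.
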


\begin{proof}
The proof is the same for any $n$ (using also Proposition
\ref{prop.Morita}, so we let $n=1$ for notational simplicity).  Let us
assume that conditions (i) and (ii) of Theorem \ref{thm.Fredholm.Cond}
are satisfied and let $\mathbfPsi$ be a $C^{\ast}$-algebra containing
$C^{\ast}_r(\maG)$ as an essential ideal. Property (i) implies that
$\pi_0$ is injective on $\mathbfPsi$ since $C\sp{\ast}(\maG)$ is an
essential ideal of $\mathbfPsi$.  By our assumptions on $\maG$, we
obtain that the algebra $\Psi/C_r\sp{\ast}(\maG_U) \simeq
\pi_0(\Psi)/\maK =: B$ contains $B_0 :=
\pi_0(C_r\sp{\ast}(\maG))/\maK$ as an ideal and $B/B_0 \simeq
\mathbfPsi/C_r\sp{\ast}(\maG)$. Moreover, $B_0 \simeq
C_r\sp{\ast}(\maG_F)$ by (ii).

Let now $a \in \mathbfPsi$ be arbitrary. By Atkinson's Theorem, we
know that $\pi_0(a)$ is Fredholm if, and only if, its image in $B$ is
invertible. But by (iii) and by Corollary \ref{cor.ideal}, $c \in B$
is invertible if, and only if, the image of $c$ in
$\mathbfPsi/C_r\sp{\ast}(\maG)$ and all $\pi_x(c)$ are invertible for
all $x \in F$.
\end{proof}

\begin{corollary}\label{cor.new}
Let $\maG \tto M$ be a Lie groupoid and $\emptyset \neq U \subset M$
be a dense, $\maG$-invariant, open subset such that $\maG_U \simeq U
\times U$, and $F=M\setminus U$. Then $\maG$ is Fredholm if, and only
if, $\rCs{\maG}/\rCs{\maG_U} \simeq \, C_r\sp{\ast}(\maG_F)$ and
$\maR(\maG_F)$ is a strictly spectral set of representations of
$\maG_F$ (that is, if $\maG_F$ has Exel's property).
\end{corollary}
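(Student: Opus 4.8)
The plan is to recognize that this corollary merely repackages the two preceding theorems. Its two hypotheses are \emph{verbatim} conditions (i) and (ii) of Theorem~\ref{thm.Fredholm.Cond}, once one observes that the two phrasings of condition (ii) agree here: since $\maG$ is second countable and $F = M \smallsetminus U$ is closed, the reduction $\maG_F$ is again second countable, so $\rCs{\maG_F}$ is separable and, by Theorem~\ref{thm.exhaustive}, the family $\maR(\maG_F)$ is strictly spectral if, and only if, it is exhaustive, i.e. if, and only if, $\maG_F$ has Exel's property. Thus the task splits into the two implications, each supplied by one theorem.

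The forward implication is immediate: if $\maG$ is Fredholm, then Theorem~\ref{thm.Fredholm.Cond} gives exactly $\rCs{\maG}/\rCs{\maG_U} \simeq \rCs{\maG_F}$ together with Exel's property for $\maG_F$, which are the two asserted conditions.

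For the converse I would assume the two conditions, which are precisely hypotheses (i) and (ii) of Theorem~\ref{thm.main.converse}, the remaining hypothesis of that theorem (that $U$ be a dense $\maG$-invariant open subset with $\maG_U \simeq U \times U$) being part of the data here. I then apply Theorem~\ref{thm.main.converse} with $n = 1$ and with the concrete unital $C\sp{\ast}$-algebra $\mathbfPsi := \CC\,\mathrm{id}_{L^2(U)} + \pi_0(\rCs{\maG}) \subseteq \maL(L^2(U))$, identifying $\rCs{\maG}$ with its image $\pi_0(\rCs{\maG})$ as in Notation~\ref{notation.U}. For any $a \in \rCs{\maG}$, the image of $1 + a$ in the quotient $\mathbfPsi/\pi_0(\rCs{\maG})$ is the unit of that quotient, hence invertible, so the ``ellipticity'' hypothesis of Theorem~\ref{thm.main.converse} holds automatically; the theorem then yields that $\pi_0(1 + a) = 1 + \pi_0(a)$ is Fredholm if, and only if, every $1 + \pi_x(a)$, $x \in F$, is invertible. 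This is exactly condition (ii) of Definition~\ref{def-fredholm}, while condition (i) of that definition is part of the hypotheses; hence $\maG$ is Fredholm.

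The single point that genuinely requires checking --- and hence the main (if minor) obstacle --- is the essential-ideal hypothesis of Theorem~\ref{thm.main.converse}, namely that $\pi_0(\rCs{\maG})$ be an essential ideal of $\mathbfPsi$. I would dispatch this by recalling that the vector representation $\pi_0$ is injective (Khoshkam--Skandalis, as noted after Notation~\ref{notation.U}) and that $\pi_0(\rCs{\maG}) \supseteq \pi_0(\rCs{\maG_U}) = \maK(L^2(U))$. Since the compact operators form an essential ideal of $\maL(L^2(U))$ and $\mathbfPsi \subseteq \maL(L^2(U))$, any $b \in \mathbfPsi$ annihilating $\pi_0(\rCs{\maG})$ already annihilates $\maK(L^2(U))$ and therefore vanishes; so $\pi_0(\rCs{\maG})$ is essential in $\mathbfPsi$, as required.
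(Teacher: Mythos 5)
Your proposal is correct and follows the paper's intended route: the corollary is stated without a separate proof precisely because it is the combination of Theorem \ref{thm.Fredholm.Cond} (forward direction) with Theorem \ref{thm.main.converse} applied with $n=1$ to the unitization $\mathbfPsi = \CC\,1 + \pi_0(\rCs{\maG})$, for which the ellipticity condition is automatic --- exactly what you do. Your explicit verifications --- that the essential-ideal hypothesis holds because $\pi_0(\rCs{\maG})$ contains $\maK(L^2(U))$, which is already essential in $\maL(L^2(U))$, and that strict spectrality and exhaustiveness (Exel's property) coincide here by second countability and Theorem \ref{thm.exhaustive} --- are precisely the details the paper leaves implicit, and they are correct.
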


Recalling the exact sequence in Proposition \ref{renault.exact}, it
follows that, in particular, if $\maG$ is metrically amenable, then
$\maG$ is Fredholdm if, and only if, $\maG_F$ has Exel's property.

\begin{remark}\label{rem.Exel.suff}
It is immediate to observe that, if the set $\maR(\maG_F)$ of regular
representations $\pi_x$, $x \in F$, forms a strictly spectral set of
representations of $\Cs{\maG_F}$, then $\maG_F$ has Exel's strong
property, and conditions (ii) and (iii) of the Theorem
\ref{thm.Fredholm.Cond} are satisfied (using Proposition
\ref{renault.exact}).
\end{remark}

This remark and the above results then give the following sufficient
conditions for a Lie groupoid to be Fredholm.

\begin{corollary}
Let $\maG \tto M$ be a Lie groupoid and $U \subset M$ be a {dense}
$\maG$-invariant open subset such that $\maG_U \simeq U \times U$, and
$F=M\setminus U$.  Assume that $\maR(\maG_F)$ forms a {strictly
  spectral} set of representations of $\Cs{\maG_F}$ or, equivalently,
that $\maG_F$ has Exel's strong property. Then $\maG$ is a Fredholm
groupoid.
\end{corollary}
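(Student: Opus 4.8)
The plan is to reduce everything to the characterization of Fredholm groupoids already obtained in Corollary \ref{cor.new}, which states that $\maG$ is Fredholm precisely when $\rCs{\maG}/\rCs{\maG_U} \simeq \rCs{\maG_F}$ and $\maG_F$ has Exel's property. Since the hypothesis supplies a dense, $\maG$-invariant, open subset $U$ with $\maG_U \simeq U \times U$, part (i) of Definition \ref{def-fredholm} is immediate, and the real work is to verify these two algebraic conditions.

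First I would unwind the hypothesis. The standing assumption of this section is that $\maG$ is second countable, so $\maG_F$ is second countable and $\Cs{\maG_F}$ is separable. Hence, by Theorem \ref{thm.exhaustive}, $\maR(\maG_F)$ being a strictly spectral family of representations of $\Cs{\maG_F}$ is equivalent to $\maR(\maG_F)$ being exhaustive for $\Cs{\maG_F}$, that is, to $\maG_F$ having Exel's strong property; this is the ``equivalently'' clause in the statement. By the observation recorded just after the definition of Exel's strong property, this in turn is equivalent to $\maG_F$ \emph{both} having Exel's property \emph{and} being metrically amenable.

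Next I would extract the two conditions of Theorem \ref{thm.Fredholm.Cond}. Metric amenability of $\maG_F$ lets me invoke Proposition \ref{renault.exact}(ii), which produces the reduced short exact sequence $0 \to \rCs{\maG_U} \to \rCs{\maG} \to \rCs{\maG_F} \to 0$ and hence the isomorphism $\rCs{\maG}/\rCs{\maG_U} \simeq \rCs{\maG_F}$ required by condition (i); Exel's property of $\maG_F$ is exactly condition (ii). With both conditions in hand, Corollary \ref{cor.new} (equivalently, the converse direction packaged in Theorem \ref{thm.main.converse}) yields that $\maG$ is Fredholm. Essentially all of this is already isolated in Remark \ref{rem.Exel.suff}, so the proof is a short assembly of these facts.

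The only genuinely delicate point, and the step I would flag as the main obstacle, is the passage from the full to the reduced $C^{\ast}$-algebra of $\maG_F$. A priori the hypothesis controls $\Cs{\maG_F}$, whereas the Fredholm condition in Definition \ref{def-fredholm} and Corollary \ref{cor.new} is phrased in terms of $\rCs{\maG_F}$ and the compacts $\rCs{\maG_U} \simeq \maK$. It is precisely the metric amenability of $\maG_F$, forced by Exel's strong property, that collapses $\Cs{\maG_F} \simeq \rCs{\maG_F}$ and makes Proposition \ref{renault.exact}(ii) applicable; without it one would only have the full exact sequence of Proposition \ref{renault.exact}(i) and no control of the reduced quotient. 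I would therefore state this identification explicitly before invoking Corollary \ref{cor.new}.
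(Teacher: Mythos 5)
Your proposal is correct and is essentially the paper's own argument: the paper disposes of this corollary in a single line by citing Remark \ref{rem.Exel.suff} (which contains precisely your reduction of Exel's strong property to metric amenability plus Exel's property, together with the use of Proposition \ref{renault.exact} to obtain the isomorphism $\rCs{\maG}/\rCs{\maG_U} \simeq \rCs{\maG_F}$ demanded by condition (i) of Theorem \ref{thm.Fredholm.Cond}) and then invoking Corollary \ref{cor.new}, equivalently the converse packaged in Theorem \ref{thm.main.converse}. The full-versus-reduced subtlety you flag as the delicate step is exactly the content the paper compresses into that remark, so your write-up is a faithful expansion of the intended proof.
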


See also \cite{MantoiuBB}. Together with Theorem \ref{thm.lemma.Exel},
we obtain an important class of Fredholm groupoids. Recall that we
assume that all our groupoids are Hausdorff.

\begin{theorem}\label{thm.main.Fredholm0}
Let $\maG\tto M$ be a \ssub\ Lie groupoid with filtration $U_i$ such
that $U_0$ is dense and $\maG_{U_0} \simeq U_0 \times U_0$. Assume
that all isotropy groups $\maG_x^x$ are amenable. Then $\maG$ is a
Fredholm groupoid.
\end{theorem}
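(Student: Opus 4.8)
The plan is to split off the pair-groupoid part of the filtration and reduce the Fredholm property of $\maG$ to an Exel-type statement about the boundary groupoid $\maG_F$, which is then supplied by the results of the previous section. First I would set $U \ede U_0$; by hypothesis this is a dense, open, $\maG$-invariant subset with $\maG_U \simeq U \times U$, so condition (i) of Definition \ref{def-fredholm} holds and $F \ede M \smallsetminus U_0$ is a well-defined closed $\maG$-invariant set. By the Corollary stated immediately before this theorem (equivalently, by Corollary \ref{cor.new} combined with Remark \ref{rem.Exel.suff}), it then suffices to prove that the reduction $\maG_F$ has Exel's strong property.

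To establish this, I would first use Remark \ref{rem.restriction} to note that $\maG_F = \maG_{U_N \smallsetminus U_0}$ is again a \ssub\ groupoid, carrying the filtration
\begin{equation*}
 \emptyset \subset U_1 \smallsetminus U_0 \subset U_2 \smallsetminus U_0 \subset \ldots \subset U_N \smallsetminus U_0 \seq F \,,
\end{equation*}
whose strata are precisely the sets $S = U_i \smallsetminus U_{i-1}$, $i = 1, \ldots, N$, already occurring in the given filtration of $\maG$ (here $U_0 \subset U_{i-1}$ for $i \geq 1$, so the differences are unchanged). For each such $S$ one has $\maG_S \simeq f_S \pullback(G_S)$ with $G_S \to B_S$ a bundle of Lie groups, and for $x \in S$ the isotropy group $\maG_x^x$ is exactly the fiber of $G_S \to B_S$ over $f_S(x)$; since all isotropy groups are amenable by assumption, these fibers are amenable. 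Proposition \ref{prop.exhaustive} then yields that each $\maG_S$ has Exel's strong property.

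Finally, since $\maG$, and hence $\maG_F$, is second countable, Corollary \ref{cor.sufficient.filtration} applied to the displayed filtration gives that $\maG_F$ has Exel's strong property. Together with the reduction of the first paragraph this shows that $\maG$ is Fredholm. (The content of these last two paragraphs is exactly Theorem \ref{thm.lemma.Exel} applied to $\maG_F$.)

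The one point demanding care — a subtlety rather than a genuine obstacle — is that $\maG_F$ need not itself be a Lie groupoid, because $F = M \smallsetminus U_0$ is in general only a union of manifolds with corners and not a single manifold (cf. Remark \ref{rem.restriction}). Consequently one cannot literally quote Theorem \ref{thm.lemma.Exel}, whose hypothesis is a \ssub\ \emph{Lie} groupoid. What rescues the argument is that the two tools actually invoked, Proposition \ref{prop.exhaustive} and Corollary \ref{cor.sufficient.filtration}, are proved for general second countable (locally compact) groupoids and therefore apply verbatim to the \ssub\ groupoid $\maG_F$; the Lie structure of the ambient $\maG$ enters only through second countability and the existence of a Haar system.
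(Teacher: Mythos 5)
Your proof is correct and follows essentially the same route as the paper: reduce, via the corollary preceding the theorem, to showing that $\maG_F$, $F := M \smallsetminus U_0$, has Exel's strong property, and then obtain that property from Remark \ref{rem.restriction}, Proposition \ref{prop.exhaustive} and Corollary \ref{cor.sufficient.filtration} --- which is precisely the content of Theorem \ref{thm.lemma.Exel}. Your closing caveat is in fact a point the paper glosses over: the paper's proof cites Theorem \ref{thm.lemma.Exel} directly for $\maG_F$ even though $\maG_F$ need not be a \ssub\ \emph{Lie} groupoid, whereas you correctly observe that one should instead rerun the proof of that theorem, whose ingredients require only second countability and a Haar system.
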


\begin{proof}
By Remark \ref{rem.restriction} applied to $F:= M \smallsetminus U_0$
and by Theorem \ref{thm.lemma.Exel}, we know that the family
$\maR(\maG_F)$ of regular representations of $\maG_F$ is an exhaustive
family of representations of $\Cs{\maG_F}$, that is, $\maG_F$ has
Exel's strong property.  The result follows from the previous
corollary.
\end{proof}

We notice that we can replace $\pi_x$, $x \in F$, with $\pi_b$, $b \in
B_S$, for the strata corresponding to the boundary. We can further
restrict to the set of $b$s in a dense set of orbits, since they give
rise to a faithful family of representations of $\maG_F$, again by
\cite{KSkandalis}.

\subsection{Pseudodifferential operators}
\label{ssec.pseudodifferential}
So far we have discussed mostly groupoids and operators in their
(unitized) $\Cstar$-algebras. The study of differential operators on
suitable non-compact spaces can be reduced to the study of groupoid
algebras using pseudodifferential operators on Lie groupoids. In this
subsection, we will explain this reduction procedure.  See
\cite{hormander3, NicolaRodinoBook, RuzhanskyBook, simanca,
  vishikGrushin} for introductions to pseudodifferential operators.

Let $\maG$ be a Lie groupoid and consider the algebra
$\Psi\sp{*}(\maG)$ of {\em pseudodifferential operators on the
  groupoid $\maG$}, whose definition we now briefly recall \cite{aln2,
  Monthubert, NWX}. Then, for $m \in \ZZ \cup\{\pm \infty\}$,
$\Psi\sp{m}(\maG)$ consists of smooth families $(P_x)_{x \in M}$ of
classical pseudo-differential operators $P_x \in \Psi^m(\maG_x)$ of
order $m$, that are right invariant with respect to the action of
$\maG$ and have compactly supported distribution kernels.  In
particular, $\Psi\sp{-\infty}(\maG)$ is nothing but the convolution
algebra of smooth, compactly supported function on $\maG$, that is,
$\Psi\sp{-\infty}(\maG) \simeq \CIc(\maG)$.

This construction extends right away to operators between smooth
sections of some vector bundles $E, F \to M$. There are two methods of
doing this. The first method is to consider families $(P_x)_{x \in M}$
of operators $P_x \in \Psi^m(\maG_x; r^*(E), r^*(F))$. We denote by
$\Psi^m(\maG; E, F)$ the resulting set of operators. By embedding $E
\oplus F$ into a trivial bundle of dimension $N$, we can identify
$\Psi^m(\maG; E, F)$ with a subspace of $M_N(\Psi^m(\maG))$, when
convenient. Similar considerations yield differential operators
generated by $\maV$ acting between vector bundles $E$ and $F$ endowed
with metric preserving connections.

The second method to deal with operators between two vector bundles
$E, F \to M$ is as follows. Let us choose an embedding of $E \oplus
F\to \underline{\CC}^N$ into a trivial vector bundle and denote by $e$
and $f$ the corresponding orthogonal projections onto the images of
$e$ and $f$.  Then we can identify
\begin{equation*}
 \Psi^m(\maG; E, F) \ede f M_N(\Psi^m(\maG) ) e \,.
\end{equation*}
This idea applies to other types of similar operators (norm closures of 
operators in $\Psi^m(\maG)$, differential operators, ... ).

One of the main points of considering pseudodifferential operators on
groupoid is that, if we denote $\maV := \Lie(\maG) :=
\varrho(\Gamma(A(\maG)))$, then
\begin{equation}
 \Diff^m(\maV; E, F) \seq \Psi^m(\maG; E, F) \cap \Diff(M; E, F)\,,
\end{equation}
where $\Diff^m(\maV; E, F)$ is generated by $\CI(M; 
\End(E \oplus F))$
and $\maV$ (using compatible connections on $E$ and $F$) 
and $\Diff(M; E, F)$ denotes {\em all} differential operators on 
$M$ acting between sections of $E$ and $F$. See also \ref{rem.aln1}.

Another main point for introducing the algebra $\Psi\sp{*}(\maG)$ is
related to parametrices and inverses of differential operators.  To
discuss this, let us denote by $\overline{\Psi}(\maG)$ the
$C\sp{\ast}$-algebra obtained as the closure of $\Psi\sp{*}(\maG)$
with respect to all contractive $*$-representations of
$\Psi\sp{-\infty}(\maG)$, as in \cite{LNGeometric}.  As we will see
below, this definition extends to operators acting between vector
bundles. Let us denote, as usual, by $S\sp{*}A$ the set of unit
vectors in the Lie algebroid $A\sp{*}(\maG)$ associated with $\maG$
(as in Remark \ref{rmk.liealgebroid}), with respect to some fixed
metric on $A\sp{*}(\maG)$. Then $\overline{\Psi}(\maG)$ fits into the
following exact sequence
\begin{equation}\label{main-seq}
 0 \to C\sp{\ast}(\maG) \to \overline{\Psi}(\maG)
 \overset{\sigma_0}{-\!\!\!\to} \maC_0(S\sp{*} A) \to 0 \,.
\end{equation}
(See for instance \cite{LMN1} and the references therein.) Moreover,
if $P \in \Psi\sp{m}(\maG; E, F)$, $m \ge 0$, is elliptic and
invertible in $L^2$, then its inverse will be in
$\overline{\Psi}(\maG; F, E)$. Typically, Fredholm conditions are
obtained for Fredholm Lie groupoids by applying our results to the
algebra $\mathbfPsi = M_N(\overline{\Psi}(\maG))$, for some $N$. Let
us see how this is done.

We first need to recall the definition of Sobolev spaces.  Let us fix
in this subsection a Lie groupoid $\maG \tto M$. For the purpose of
the next result, let us assume that its space of units $M$ has an
open, dense, $\maG$-invariant subset $U \subset M_0 := M
\smallsetminus \pa M$ such that the restriction $\maG_U$ is isomorphic
to the pair groupoid $U \times U$. Let us also assume that the space
of units $M$ is {\bf compact.}  This is needed in order to construct
canonical Sobolev spaces on the interior of $M$. Indeed, there is an
essentially unique class of metrics on $A(\maG)$, which, by
restriction, gives rise to a class of metrics on $U$, that are called
{\em compatible} (with the groupoid $\maG$), see also \cite{aln1}. All
these metrics are Lipschitz equivalent and complete \cite{sobolev,
  LNGeometric}.  In fact, the Sobolev spaces of all these metrics will
coincide. They are given as the domains of the powers of $1 + \Delta$,
where $\Delta$ is the (geometer' s, i.e. positive) Laplacian. We shall
denote by $H\sp{s}(U) = H\sp{s}(M)$ these Sobolev spaces. The Sobolev
spaces $H\sp{s}(M)$ are discussed in detail in \cite{sobolev}.  See
also \cite{frascatti} for a review. The operators in $\Psi^m(\maG)$
(and their vector bundle analogues) model differential operators
associated to (any) compatible metric (see also Remark \ref{rem.aln1}
below).  By considering a vector bundle $E \to M$ and $\Delta_E$ the
associated Laplacian on sections of $E$, we obtain the spaces $H^s(M;
E)$.

Note that the Sobolev spaces $H^s(M)$ are {\em not} defined using the
compact manifold structure on $M$, but rather using the complete
metric on $U \subset M$. Recall that if the set $M\setminus U$ is as
in Definition \ref{def-fredholm}, then the set $U$ is called a
manifold with {\em amenable ends.} If $P \in \Psi^m(\maG; E, F)$, then
$P : H\sp{s}(M; E) \to H\sp{s-m}(M; F)$ is continuous.

Let $\Psi^m(\maG; E, F)$ denote the pseudodifferential operators
acting on the lifts of $E$ and $F$ to $\maG$ via $d$.  We let
$\Psi^m(\maG; E) := \Psi^m(\maG; E, E)$.  Then if $P : H^s(M; E) \to
H^{s-m}(M; F)$ is an order $m$ pseudodifferential operator, we replace
the study of its Fredholm properties with those of
\begin{equation}\label{eq.def.P1}
 P_1 \ede \left [
 \begin{array}{ccc}
  0 & P & 0 \\
  P^* & 0 & 0 \\
  0 & 0 & Q_m
 \end{array}
 \right ] \, ,
\end{equation}
where $Q_m$ is an invertible order $m$ pseudodifferential operator
acting on the complement of $E \oplus F$.  Let $G$ denote the
complement of $E \oplus F$ in $\underline{\CC}^N$.  We fix the smooth
vector bundles $E, F, G \to M$ in what follows. We denote by
$\Delta_E$, $\Delta_F$, and $\Delta_G$ the associated Laplacians
acting on sections of $E$, respectively, $F$ and $G$.  For instance,
$Q_m \in \Psi^m(\maG; G)$ could be $(1 + \Delta_G)^{m/2}$ (or a
suitable approximation it, see \cite{LNGeometric}).

One of the main drawbacks of the algebras $\Psi^m(\maG)$ is that they
are too small to contain resolvents. This issue is easily fixed by
considering completions with respect to suitable norms. Let us
consider the norm $\| \, \cdot \, \|_{m,s}$ defined by.
\begin{equation}\label{eq.def.ms}
  \|P\|_{m, s} \ede \|(1 + \Delta_F)\sp{(s-m)/2} P 
  (1 + \Delta_E)\sp{-s/2} \|_{L^2 \to L^2}
\end{equation}
We then let
\begin{equation}\label{eq.def.Cms}
 L^{m}_{s}(\maG; E, F) \ede \| \, \cdot \, \|_{m,s}\mbox{-closure of } 
 \Psi^m(\maG; E, F)\,.
\end{equation}
Recall that $H^{s}(M; E)$ is the domain of $(1 + \Delta_E)^{s/2}$, if
$s \ge 0$, whenever $M$ is compact (see \cite{sobolev,
  Grosse.Schneider.2013, LNGeometric}. Hence $L^{m}_{s}(\maG; E, F)$
is the {\em norm closure} of $\Psi^m(\maG;E,F)$ in the topology of
continuous operators $H^s(M; E) \to H^{s-m}(M; F)$.  We note that, if
$P \in \Psi^m(\maG; E, F)$, then
\begin{equation}
   (1 + \Delta_F)\sp{(s-m)/2} P (1 + \Delta_E)\sp{-s/2} \; \in \;
  \overline{\Psi}(\maG; E, F) \, =:\, L^{0}_{0}(\maG; E, F)\,
\end{equation}
by \cite{LNGeometric} (see also \cite{alnv}). Moreover, let 
\begin{equation}
 \maW^{m}(\maG) \ede \Psi^{m}(\maG) + \cap_{s} L^{-\infty}_{s}(\maG)\,.
\end{equation}
Then $\maW^{m}(\maG) \subset L^{m}_{s}(\maG)$ and
$\maW^{\infty}(\maG)$ is an algebra of pseudodifferential operators
that contains the inverses of its $L^2$-invertible operators.

Recall that we denote by $\pi_0 : C\sp{\ast}(\maG) \to
\maL(L\sp{2}(M))$ the vector representation and that $M$ is
compact. It is unitarily equivalent to the regular representations
$\pi_x$, $x \in U$.  Moreover, since $\maG$ is Hausdorff, $\pi_0$ is
injective.  We have the following result from \cite{LNGeometric}.

\begin{proposition}\label{prop.nonclassical2}
Let $\maG \tto M$ be a Lie groupoid with $\maG_U = U \times U$ and $M$
compact. Then $M_N(\overline{\Psi}(\maG)) = M_N(L_0^0(\maG))$ contains
$M_N(C\sp{\ast}(\maG))$ as an essential ideal. Let $s \in \RR$ and $P
\in L^m_s(\maG; E, F) \supset \Psi\sp{m}(\maG; E, F)$. Then
\begin{equation*}
 a \ede (1 + \Delta_F)\sp{(s-m)/2} P (1 + \Delta_E)\sp{-s/2} \in
\overline{\Psi}(\maG; E, F),.
\end{equation*}
We have that $P : H\sp{s}(M; E) \to H\sp{s-m}(M; F)$ is Fredholm if,
and only if, $a : L\sp{2}(M; E) \to L^2(M; F)$ is Fredholm.
\end{proposition}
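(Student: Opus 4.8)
The plan is to establish the three assertions in turn: the first two are essentially formal given the exact sequence \eqref{main-seq} and the cited mapping property, while the third is a diagram chase.

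First I would treat the ideal statement. The exact sequence \eqref{main-seq} exhibits $C\sp{\ast}(\maG)$ as the kernel of the principal symbol map $\sigma_0$, hence as a closed two-sided ideal of $\overline{\Psi}(\maG)$; amplifying to $N\times N$ matrices, $M_N(C\sp{\ast}(\maG))$ is a closed two-sided ideal of $M_N(\overline{\Psi}(\maG)) = M_N(L_0^0(\maG))$. To see that it is essential it suffices, since matrix amplification preserves essentiality, to show that $C\sp{\ast}(\maG)$ is essential in $\overline{\Psi}(\maG)$. I would pass to the vector representation $\pi_0$, which is injective because $\maG$ is Hausdorff \cite{KSkandalis}, and realize $\overline{\Psi}(\maG) \subseteq \maL(L^2(M))$. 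Since $\maG_U \simeq U \times U$, we have $\pi_0(C\sp{\ast}(\maG_U)) = \maK(L^2(U)) = \maK(L^2(M))$, so $\pi_0(C\sp{\ast}(\maG))$ contains the compact operators. If $a \in \overline{\Psi}(\maG)$ annihilates $C\sp{\ast}(\maG)$, then $\pi_0(a)$ annihilates $\maK(L^2(M))$; as the compacts are essential in $\maL(L^2(M))$ (apply $\pi_0(a)$ to rank-one operators), we get $\pi_0(a) = 0$, whence $a = 0$ by injectivity of $\pi_0$.

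Next I would verify that $a \ede (1+\Delta_F)\sp{(s-m)/2} P (1+\Delta_E)\sp{-s/2}$ lies in $\overline{\Psi}(\maG; E, F)$. By the definitions \eqref{eq.def.ms}--\eqref{eq.def.Cms}, the conjugation map $Q \mapsto (1+\Delta_F)\sp{(s-m)/2} Q (1+\Delta_E)\sp{-s/2}$ is a $\|\cdot\|_{m,s}$-to-$\|\cdot\|_{0,0}$ isometry, and $P$ is by hypothesis a $\|\cdot\|_{m,s}$-limit of operators $Q \in \Psi^m(\maG; E, F)$. For each such $Q$, the mapping property imported from \cite{LNGeometric} and recorded just before the statement gives $(1+\Delta_F)\sp{(s-m)/2} Q (1+\Delta_E)\sp{-s/2} \in \overline{\Psi}(\maG; E, F) = L_0^0(\maG; E, F)$. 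Since $\overline{\Psi}(\maG; E, F)$ is $\|\cdot\|_{0,0}$-closed and the isometry carries an approximating sequence for $P$ to an approximating sequence for $a$, it follows that $a \in \overline{\Psi}(\maG; E, F)$.

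Finally, the Fredholm equivalence is a diagram chase. Because $M$ is compact, the maps $(1+\Delta_E)\sp{s/2} : H^s(M; E) \to L^2(M; E)$ and $(1+\Delta_F)\sp{(s-m)/2} : H^{s-m}(M; F) \to L^2(M; F)$ are topological isomorphisms, as $H^t$ is by definition the domain of $(1+\Delta)\sp{t/2}$ (extended to $t < 0$ by duality). The identity $a \circ (1+\Delta_E)\sp{s/2} = (1+\Delta_F)\sp{(s-m)/2} \circ P$ then says that $a : L^2(M; E) \to L^2(M; F)$ and $P : H^s(M; E) \to H^{s-m}(M; F)$ coincide after pre- and post-composition with isomorphisms, so one is Fredholm if, and only if, the other is. The only genuinely analytic input throughout is the mapping property $(1+\Delta_F)\sp{(s-m)/2}\, \Psi^m\, (1+\Delta_E)\sp{-s/2} \subset \overline{\Psi}$ taken from \cite{LNGeometric}, so I expect the substantive work to have been front-loaded into that citation rather than into the present argument.
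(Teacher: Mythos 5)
Your treatment of the second and third assertions (the membership $a \in \overline{\Psi}(\maG; E, F)$, via the fact that conjugation by $(1+\Delta_F)\sp{(s-m)/2}$ and $(1+\Delta_E)\sp{-s/2}$ is a $\|\cdot\|_{m,s}$-to-$\|\cdot\|_{0,0}$ isometry, and the transfer of Fredholmness through the isomorphisms $(1+\Delta_E)\sp{s/2} : H\sp{s} \to L\sp{2}$, $(1+\Delta_F)\sp{(s-m)/2} : H\sp{s-m} \to L\sp{2}$) is correct and is essentially the paper's own argument with the definitions unwound. The gap is in your proof of essentiality. You invoke Khoshkam--Skandalis to claim that the vector representation $\pi_0$ is injective on $\overline{\Psi}(\maG)$. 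What \cite{KSkandalis} (and the remark following Notation \ref{notation.U}) actually gives is injectivity of $\pi_0$ on the \emph{reduced} algebra $\rCs{\maG}$. The proposition, however, concerns the \emph{full} algebra $\Cs{\maG}$ sitting inside $\overline{\Psi}(\maG)$, which is by definition a completion with respect to \emph{all} contractive representations of $\Psi\sp{-\infty}(\maG)$. Since $\pi_0$ is (equivalent to) a regular representation, it factors through $\rCs{\maG}$, so $\ker(\pi_0) \supseteq \ker\bigl(\Cs{\maG} \to \rCs{\maG}\bigr)$, and the latter is nonzero whenever $\maG$ fails to be metrically amenable --- no amenability is assumed in the proposition. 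So the key step ``$\pi_0(a)=0 \Rightarrow a=0$ on $\overline{\Psi}(\maG)$'' is not merely unproved: it is false in general.

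The circularity is visible inside the paper itself: in the proof of Theorem \ref{thm.main.converse}, injectivity of $\pi_0$ on the ambient algebra $\mathbfPsi$ is \emph{deduced from} essentiality of the ideal (if $\pi_0(a)=0$, then $\pi_0(ab)=0$ for all $b$ in the ideal, hence $ab=0$ by injectivity on the ideal, hence $a=0$ by essentiality); you are running this implication backwards. The paper instead proves essentiality as a general fact, reduced to the corresponding classical statement on each of the non-compact manifolds $\maG_x$. A clean repair of your argument, compatible with the definition of $\overline{\Psi}(\maG)$, is: if $a \in \overline{\Psi}(\maG)$ satisfies $a\,\Cs{\maG}=0$, then for every nondegenerate contractive representation $\pi$ of $\Psi\sp{-\infty}(\maG)$ (these define the norm of $\overline{\Psi}(\maG)$) one has $\pi(a)\,\pi(\Psi\sp{-\infty}(\maG))\maH_\pi = 0$ with $\pi(\Psi\sp{-\infty}(\maG))\maH_\pi$ dense in $\maH_\pi$, so $\pi(a)=0$; taking the supremum over all such $\pi$ gives $a=0$. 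A single-representation argument through the compact operators cannot distinguish the full from the reduced completion, and that distinction is exactly what essentiality of the full $\Cs{\maG}$ must account for.
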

This proposition applies, in particular, if $\maG$ is a Fredholm Lie
groupoid with $M$ compact.

\begin{proof}
We can assume $N = 1$. The fact that $\overline{\Psi}(\maG)$ contains
$C\sp{\ast}(\maG)$ as an essential ideal is a general fact--true for
any Lie groupoid. This general fact is true because it is true for any
non-compact manifold, in particular, for each of the manifolds
$\maG_x$. We have, by the definitions of Sobolev spaces and of
Fredholm operators, that $P$ is Fredholm if, and only if, $a$ is
Fredholm. The fact that $a := (1 + \Delta_F)\sp{(s-m)/2} P (1 +
\Delta_E)\sp{-s/2} \in \overline{\Psi}(\maG; E, F)$ follows from the
definition of $L_s^m(\maG; E, F)$, Equation \eqref{eq.def.Cms} and the
results in \cite{LNGeometric}, as noticed already above.
\end{proof}

This proposition then gives right away the following result (for
technical reasons, we may have to replace $P$ with $a$ in certain
proofs). Recall that an operator $P \in \Psi\sp{m}(\maG; E, F)$
consists of a right invariant family $P = (P_x)$, $x \in M$, with
$P_x$ acting between the sections of $r^*(E)$ and $r^*(F)$ on
$\maG_x$. For simplicity, we shall drop $r^*$ from the notation below,
since there is no danger of confusion.  We have $P_x = \pi_x(P)$. The
operators $P_x$, $x\in M\setminus U$, will be called \emph{limit
  operators} (of $P$). They go back to \cite{Favard}.

\begin{theorem}\label{thm.nonclassical2}
Let $\maG \tto M$ be a Fredholm Lie groupoid with $M$ compact and
$\emptyset \neq U \subset M$, open such that $\maG_U = U \times
U$. Let $s\in \RR$ and $P \in L^m_s(\maG; E, F) \supset
\Psi\sp{m}(\maG; E, F)$.  We have
\begin{multline*} 
	P : H^s(M; E) \to H^{s-m}(M; F) \mbox{ is Fredholm}
        \ \ \Leftrightarrow \ \ P \mbox{ is elliptic and each }\\
	\ P_{x} : H^s(\maG_x; E) \to H^{s-m}(\maG_x; F)\,, \ x \in M
        \smallsetminus U\,, \ \mbox{ is invertible} \,.
\end{multline*}
\end{theorem}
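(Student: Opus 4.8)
The plan is to reduce the Fredholm question for $P$ acting between Sobolev spaces to a Fredholm question for a bounded operator between $L^2$-spaces, and then to apply the abstract machinery assembled in the previous subsection, namely Theorem \ref{thm.main.converse} together with the characterization of Fredholm groupoids in Corollary \ref{cor.new}. First I would set $a \ede (1 + \Delta_F)\sp{(s-m)/2} P (1 + \Delta_E)\sp{-s/2}$ as in Proposition \ref{prop.nonclassical2}, so that $a \in \overline{\Psi}(\maG; E, F)$ and, by that proposition, $P : H\sp{s}(M; E) \to H\sp{s-m}(M; F)$ is Fredholm if, and only if, $\pi_0(a) : L\sp{2}(M; E) \to L\sp{2}(M; F)$ is Fredholm. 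This step already uses compactness of $M$ in an essential way, since the Sobolev spaces and the operators $(1 + \Delta)\sp{t}$ are only defined in that setting.

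Next I would invoke Theorem \ref{thm.main.converse} with $\mathbfPsi = M_N(\overline{\Psi}(\maG))$, which by Proposition \ref{prop.nonclassical2} contains $M_N(C\sp{\ast}_r(\maG))$ as an essential ideal. Since $\maG$ is assumed to be a Fredholm Lie groupoid, the two conditions of Theorem \ref{thm.Fredholm.Cond} hold (this is exactly the content of Corollary \ref{cor.new}), so the hypotheses of Theorem \ref{thm.main.converse} are met. That theorem then tells us that $\pi_0(a)$ is Fredholm if, and only if, the image of $a$ in $\mathbfPsi/M_N(C\sp{\ast}_r(\maG))$ is invertible \emph{and} all the $\pi_x(a)$, $x \in M \smallsetminus U$, are invertible. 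It remains to identify these two conditions with the ones stated in the theorem.

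For the first condition, I would use the exact sequence \eqref{main-seq}, which identifies the quotient $\overline{\Psi}(\maG)/C\sp{\ast}(\maG)$ with $\maC_0(S\sp{*}A)$ via the principal symbol map $\sigma_0$. Tracing through the definition of $a$, invertibility of its image in the quotient is precisely invertibility of the principal symbol of $P$ off the zero section, that is, \emph{ellipticity} of $P$. For the second condition, I would check that invertibility of $\pi_x(a)$ is equivalent to invertibility of $P_x = \pi_x(P) : H\sp{s}(\maG_x; E) \to H\sp{s-m}(\maG_x; F)$; this follows because $\pi_x$ intertwines the conjugation by powers of $(1+\Delta)$ with the corresponding conjugation on the fiber $\maG_x$, which is exactly the identification of $H\sp{s}(\maG_x)$ as the domain of the appropriate power of the fiberwise Laplacian.

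The main obstacle, and the step requiring the most care, is this last identification of $\pi_x(a)$ with the fiberwise operator $P_x$ between the honest Sobolev spaces $H\sp{s}(\maG_x)$. One must verify that the representation $\pi_x$ carries the global $(1+\Delta_E)\sp{-s/2}$ to the operator $(1+\Delta_{\maG_x})\sp{-s/2}$ on the fiber (up to the isometry coming from the invariance of the metric on $A(\maG)$), so that $\pi_x(a) = (1+\Delta)\sp{(s-m)/2}_x P_x (1+\Delta)\sp{-s/2}_x$ really expresses the Fredholm-to-invertibility transfer on $\maG_x$. Once this compatibility is in place, the equivalence $\pi_x(a)$ invertible $\Leftrightarrow$ $P_x$ invertible is immediate, and combining the two conditions yields the stated characterization.
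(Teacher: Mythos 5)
Your proposal is correct and follows essentially the same route as the paper's proof: reduce $P$ to the bounded operator $a = (1+\Delta_F)^{(s-m)/2}P(1+\Delta_E)^{-s/2}$ via Proposition \ref{prop.nonclassical2} (and Proposition \ref{prop.bundles} for the matrix/bundle reduction), apply Theorem \ref{thm.main.converse} with $\mathbfPsi = \overline{\Psi}(\maG)$ using that Fredholmness of $\maG$ yields the hypotheses of Theorem \ref{thm.Fredholm.Cond}, and identify the quotient condition with ellipticity and $\pi_x(a)$ with $(1+\Delta_x)^{(s-m)/2}P_x(1+\Delta_x)^{-s/2}$. The compatibility $\pi_x(\Delta)=\Delta_x$ that you flag as the delicate step is exactly the point the paper settles by citing \cite{LNGeometric}.
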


\begin{proof}
Let us use the notation of Proposition \ref{prop.nonclassical2}. By
replacing $P$ with $a$ and using Proposition \ref{prop.bundles}, we
may assume that we work with $N \times N$ matrices of operators.  The
proof is the same for all $N$, so we let $N=1$. Since $\maG$ is
Fredholm, Theorem \ref{thm.main.converse}, applied to $\mathbfPsi :=
\overline{\Psi}(\maG)$, gives that $a \in \overline{\Psi}(\maG)$ is
Fredholm if, and only if, its image in
$\overline{\Psi}(\maG)/C\sp{\ast}(\maG)$ is invertible and all the
operators $\pi_x(a)$ are invertible. We then notice that $\pi_x(a) =
(1 + \Delta_x)\sp{(s-m)/2} P_x (1 + \Delta_x)\sp{-s/2}$ since the
extension of $\pi_x$ to operators affiliated to
$\overline{\Psi}(\maG)$ is given by $\pi_x(P) = P_x$ since this is
true for $P \in \Psi\sp{0}(\maG)$ and $\pi_x(\Delta) = \Delta_x$, the
Laplacian on $\maG_x$ by \cite{LNGeometric}.
\end{proof}

\begin{remark}\label{rem.explicit1}
To obtain the result as formulated in Theorem \ref{thm.nonclassical},
we let $\alpha$ be the set of orbits of $\maG$ acting on $F := M
\smallsetminus U$, $M_0 := U$, $M_\alpha := \maG_x$, for some $x$ in
the orbit $\alpha$, $G_\alpha := \maG_x^x$, and $P_\alpha :=
\pi_x(P)$. It may be useful to notice here that often the bundle
$M_\alpha \to M_\alpha /G_\alpha$ is trivial. This is the case, for
example, for \ssub\ groupoids.
\end{remark}

\begin{remark}\label{rem.aln1}
Let $\maV := r_*(\Gamma(A(\maG))$ be the vector fields on $M$ coming
from the infinitesimal action of $\maG \tto M$ on $M$. We denote by
$\Diff^m(\maV)$ the set of {\em order $m$} differential operators on
$M$ generated by $\maV$ and multiplication with functions in $\CI(M)$
and $\Diff(\maV) = \cup_m \Diff^m(\maV)$. We denote by $\Diff^m(\maV;
E, F)$ the analogous differential operators acting between sections of
vector bundles $E, F \to M$.  We have that all geometric operators
(Laplace, Dirac, Hodge, ... ) associated to a compatible metric on $M$
(one that comes by restriction from $A(\maG)$) belong to
$\Diff^m(\maV; E, F)$ for suitable $E, F \to M$; moreover, $\pi_x(D)$
for a geometric operator is of the same type as $D$. \cite{aln1}.
\end{remark}

\begin{remark}\label{rem.explicit2}
If $\maG$ is $d$-connected (in the sense that the fibers of $d: \maG
\to M$ are connected), then the orbits of the vector fields $\maV$ are
the same as the orbits of $\maG$. If $x \in M$ and $\alpha \subset M$
is its orbit, then this orbit (with its intrinsic manifold topology)
is diffeomorphic to the set $M_\alpha/G_\alpha$. This makes more
explicit the data in Remark \ref{rem.explicit1}.
\end{remark}

Although we shall not use this in the present paper, let us record the
consequence for essential spectra. Notice that we do not need the
closure of the unions for the essential spectra. We denote by
$\sigma(Q)$ the spectrum of an operator $Q$, defined as the set of
$\lambda \in \CC$ such that $Q - \lambda$ is {\em not} invertible, and
by $\sigma_{\ess}(Q)$ its essential spectrum, defined as the set of
$\lambda \in \CC$ such that $Q - \lambda$ is {\em not} Fredholm. Here
$Q$ may be unbounded. If $P \in \Psi\sp{m}(\maG; E)$, $m > 0$, we
consider it to be defined on $H^m$.

\begin{corollary} Consider the framework of Theorem 
 \ref{thm.nonclassical2} and $P \in \Psi\sp{m}(\maG; E)$, $m \ge 0$,
 elliptic. We have
\begin{equation*}
\begin{gathered}
 \sigma_{\ess}(P) \seq \cup_{x \in \pa M} \sigma(P_x) \cup \cup_{\xi
   \in S^*A} \sigma(\sigma_0(P)(\xi))\,, \quad \mbox{if } m =
 0\,,\\ \sigma_{\ess}(P) \seq \cup_{x \in \pa M} \sigma(P_x)\,, \quad
 \mbox{if } m > 0\,.
\end{gathered}
\end{equation*}
\end{corollary}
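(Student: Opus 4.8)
The plan is to reduce the statement to Theorem~\ref{thm.nonclassical2} applied to the shifted operator $P - \lambda$, using that, by definition, $\lambda \in \sigma_{\ess}(P)$ exactly when $P - \lambda$ fails to be Fredholm. Since $m \ge 0$, the scalar operator $\lambda I$ lies in $\Psi^0(\maG; E) \subseteq \Psi^m(\maG; E)$, so $P - \lambda \in \Psi^m(\maG; E)$ and the theorem applies to it: for $m > 0$ with $s = m$, so that $P - \lambda \colon H^m(M; E) \to L^2(M; E)$ in the convention recalled just before the corollary, and for $m = 0$ with $s = 0$, so that $P - \lambda$ is bounded on $L^2(M; E)$.

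First I would record the two elementary identities that feed into the theorem. Because $\pi_x$ simply reads off the $x$-fibre of a right-invariant family and $\lambda I$ is the family of scalar operators $(\lambda I_{\maG_x})_x$, we get $\pi_x(P - \lambda) = P_x - \lambda$; hence the limit operators of $P - \lambda$ are exactly the $P_x - \lambda$, $x \in \pa M := M \smallsetminus U$. Similarly, the principal symbol map is linear and $\lambda I$ has order $0$, so for $m > 0$ we have $\sigma_m(P - \lambda) = \sigma_m(P)$, while for $m = 0$ we have $\sigma_0(P - \lambda)(\xi) = \sigma_0(P)(\xi) - \lambda$.

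The main step is then to negate the Fredholm criterion. Theorem~\ref{thm.nonclassical2} says that $P - \lambda$ is Fredholm if, and only if, it is elliptic and every $P_x - \lambda$, $x \in \pa M$, is invertible; equivalently, $\lambda \in \sigma_{\ess}(P)$ if, and only if, either $P - \lambda$ is not elliptic or some $P_x - \lambda$ is not invertible. The latter alternative is precisely $\lambda \in \bigcup_{x \in \pa M} \sigma(P_x)$. For the former: when $m > 0$ the symbol $\sigma_m(P - \lambda) = \sigma_m(P)$ is invertible by ellipticity of $P$, so $P - \lambda$ is always elliptic and this alternative is vacuous, giving $\sigma_{\ess}(P) = \bigcup_{x \in \pa M} \sigma(P_x)$; when $m = 0$, $P - \lambda$ fails to be elliptic exactly when $\sigma_0(P)(\xi) - \lambda$ is non-invertible for some $\xi \in S^*A$, i.e. when $\lambda \in \bigcup_{\xi \in S^*A} \sigma(\sigma_0(P)(\xi))$, and combining the two alternatives yields the stated formula. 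No closures enter because Theorem~\ref{thm.nonclassical2} is an exact biconditional valid for each single $\lambda$, so the complement of the Fredholm locus is literally the displayed union rather than merely its closure.

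The only point I would check with real care is the bookkeeping in the case $m > 0$, where $P$ is viewed as an unbounded operator on $L^2(M; E)$ with domain $H^m(M; E)$. One must verify that Fredholmness of this unbounded operator coincides with Fredholmness of the bounded map $P - \lambda \colon H^m(M; E) \to L^2(M; E)$ to which the theorem is applied, and that correspondingly each spectrum $\sigma(P_x)$ is that of the unbounded operator $P_x$ on $L^2(\maG_x; E)$ with domain $H^m(\maG_x; E)$, matching the invertibility of $P_x - \lambda \colon H^m(\maG_x; E) \to L^2(\maG_x; E)$ supplied by the theorem. This compatibility is routine but is where the $m > 0$ and $m = 0$ cases genuinely differ.
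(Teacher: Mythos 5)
Your proposal is correct and follows exactly the route the paper intends: the corollary is recorded as an immediate consequence of Theorem \ref{thm.nonclassical2}, obtained by applying that theorem to $P-\lambda$, noting that $\sigma_m(P-\lambda)=\sigma_m(P)$ for $m>0$ while $\sigma_0(P-\lambda)=\sigma_0(P)-\lambda$ for $m=0$, and negating the biconditional — which is also why, as the paper remarks, no closures of the unions are needed. Your final caveat about matching Fredholmness of $P-\lambda\colon H^m\to L^2$ with the unbounded-operator conventions (domain $H^m$, as fixed just before the corollary) is exactly the right bookkeeping point and is handled by elliptic regularity in this calculus.
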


Thus, in the scalar, order zero case, we have
\begin{equation}
 \sigma_{\ess}(P) \seq \cup_{x \in \pa M} \sigma(P_x) \cup
 Im(\sigma_0(P))\,,
\end{equation}

Combining Theorems \ref{thm.nonclassical2} and
\ref{thm.main.Fredholm0}, we obtain the following result (recall that
all our groupoids are Hausdorff and second countable).

\begin{theorem}\label{thm.main.Fredholm}
Let $M$ be compact and $\maG \tto M$ be a \ssub\ Lie groupoid with
filtration $U_i$ such that $U_0$ is dense in $M$ and $\maG_{U_0}
\simeq U_0 \times U_0$.  Assume that all isotropy groups $\maG_x^x$
are amenable. Then, for any smooth vector bundles $E, F \to M$ and any
$P \in L^m_s(\maG; E, F) \supset \Psi\sp{m}(\maG; E, F)$, we have
\begin{multline*} 
	P : H^s(M; E) \to H^{s-m}(M; F) \mbox{ is Fredholm}
        \ \ \Leftrightarrow \ \ P \mbox{ is elliptic and all}\\
	\ P_{x} : H^s(\maG_x; E) \to H^{s-m}(\maG_x; F)\,, \ x \in M
        \smallsetminus U\,, \ \mbox{ are invertible}\,.
\end{multline*}
\end{theorem}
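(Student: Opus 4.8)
The plan is to assemble this theorem as an almost immediate corollary of the two main machines already built: Theorem~\ref{thm.main.Fredholm0}, which certifies that the hypothesized groupoid is a Fredholm Lie groupoid, and Theorem~\ref{thm.nonclassical2}, which converts the abstract Fredholm criterion for a Fredholm Lie groupoid into the concrete ellipticity-plus-invertibility-of-limit-operators statement for pseudodifferential operators. So the proof is essentially a matter of checking that the hypotheses of the present theorem feed correctly into each of those results and then chaining them together.

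First I would observe that the standing hypotheses---$M$ compact, $\maG$ a \ssub\ Lie groupoid with filtration $(U_i)$, $U_0$ dense with $\maG_{U_0} \simeq U_0 \times U_0$, and all isotropy groups $\maG_x^x$ amenable---are \emph{exactly} the hypotheses of Theorem~\ref{thm.main.Fredholm0}. Applying that theorem, I conclude that $\maG$ is a Fredholm Lie groupoid in the sense of Definition~\ref{def-fredholm}, with $U := U_0$ the distinguished dense open $\maG$-invariant subset and $F := M \smallsetminus U$ its set of boundary units. This is the key structural input: it certifies that the abstract Fredholm condition \eqref{eq.main.cond} holds for operators in $1 + \Cstar_r(\maG)$.

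Next, with $\maG$ now known to be a Fredholm Lie groupoid and $M$ compact, all the hypotheses of Theorem~\ref{thm.nonclassical2} are in place. That theorem applies verbatim to any $P \in L^m_s(\maG; E, F)$ (which contains $\Psi^m(\maG; E, F)$), and its conclusion is precisely the biconditional asserted here: $P : H^s(M; E) \to H^{s-m}(M; F)$ is Fredholm if, and only if, $P$ is elliptic and each limit operator $P_x = \pi_x(P) : H^s(\maG_x; E) \to H^{s-m}(\maG_x; F)$, $x \in M \smallsetminus U$, is invertible. Thus the statement follows by direct invocation.

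The only point requiring a word of care---and the step I would expect to be the mildest obstacle---is the passage through the auxiliary operator $a := (1 + \Delta_F)^{(s-m)/2} P (1 + \Delta_E)^{-s/2} \in \overline{\Psi}(\maG; E, F)$ of Proposition~\ref{prop.nonclassical2}, together with the matricial reduction of Proposition~\ref{prop.bundles} needed to handle the vector bundles $E, F$ via the projection onto $f M_N(\overline{\Psi}(\maG)) e$. Here one uses that the extension of $\pi_x$ to operators affiliated with $\overline{\Psi}(\maG)$ satisfies $\pi_x(P) = P_x$ and $\pi_x(\Delta) = \Delta_x$, so that $\pi_x(a) = (1 + \Delta_x)^{(s-m)/2} P_x (1 + \Delta_x)^{-s/2}$ is invertible if and only if $P_x$ is, and the ellipticity of $P$ matches invertibility of the image in $\overline{\Psi}(\maG)/\Cstar(\maG) \simeq \maC_0(S^*A)$ via the symbol sequence \eqref{main-seq}. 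Since all of these identifications are already established in the cited results, no new analysis is required, and the combination is routine.
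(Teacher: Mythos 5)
Your proposal is correct and follows exactly the paper's own route: the paper states this theorem with the single line ``Combining Theorems~\ref{thm.nonclassical2} and \ref{thm.main.Fredholm0}, we obtain the following result,'' which is precisely your chaining of the Fredholm-groupoid certification (Theorem~\ref{thm.main.Fredholm0}) with the pseudodifferential Fredholm criterion (Theorem~\ref{thm.nonclassical2}). Your additional remarks on the auxiliary operator $a$ and the matricial reduction via Propositions~\ref{prop.nonclassical2} and \ref{prop.bundles} correctly identify where that machinery was already discharged inside the proof of Theorem~\ref{thm.nonclassical2}, so nothing further is needed.
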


We actually have $P_{x} : H^s(\maG_x; r^*(E)) \to H^{s-m}(\maG_x;
r^*(F))$, but, as we have explained above, we drop $r^*$ from the
notation, for the sake of simplicity, since there is no danger of
confusion. We will continue to do that in all related statements
below.

\begin{remark}\label{rem.ghost}
If $\maG$ be as in the above theorem (a \ssub\ groupoid with
filtration $U_i$), and assume that it is also $d$-connected, as in
Remark \ref{rem.explicit1}. Then, when $P=D$ is a differential
operator, we can identify the limit operators $D_\alpha$ (and hence
also $M_\alpha$ and $G_\alpha$) in a direct way. Indeed, let $\alpha$
be the $\maV$-orbit = the $\maG$-orbit through some $x \in M
\smallsetminus U_0$. We have $G_\alpha = \maG_x^x$, as discussed in
Remark \ref{rem.explicit1}. Then $M_\alpha = \alpha \times G_\alpha$.
Let $D \in \Diff(\maV)$. By linearity, we may assume $D = a X_{i_1}
X_{i_2} \ldots X_{i_m}$, with $X_i \in \maV$ a {\em local basis} of
$\maV$ near $x$ and $a \in \CI(M)$. (We may also assume that $i_j$
form a non decreasing sequence, but that is not necessary.) Since
$\alpha$ is an orbit, we have that this basis $(X_j)$ consists of
vectors tangent to $\alpha$. We can choose this basis such that $X_1,
\ldots , X_k$ form a local basis of $T_x \alpha$ and $X_j$, $j > k$,
are zero on the orbit $\alpha$ (where they are defined). Consequently,
the vector fields $X_j$, $j > k$, come from a basis $\tilde X_j$, $j >
k$, of $Lie (G_\alpha)$. We then have
 \begin{equation}\label{eq.def.indicial}
  D_\alpha \seq a\vert_{\alpha} Y_{1} Y_{2} \ldots Y_{m}\,, \ \ \mbox{
    near } \{x\} \times G_\alpha \subset \alpha \times G_\alpha =:
  M_\alpha\,,
 \end{equation}
where $Y_j = X_{i_j}\vert_{\alpha}$, if $i_j \le k$, and $Y_j =
\widetilde {X}_{i_j}$ if $i_j > k$. We shall say that the variables
$Y_j$ (or $X_j$) with $j > k$ are {\em ghost derivatives} at the
considered orbit $\alpha$ (see Remark \ref{rem.ghost}).
\end{remark}

See also \cite{KarstenCR, KarstenBMB, BKSo2, vassout}.
It would be interesting to extend the results of this section to
$L^p$-spaces in view of \cite{ammann.grosse:16b, RochBookLimit}.

\section{Examples}\label{sec.ex}

In this section, we use the results of the previous section to obtain
Fredholm conditions for operators on some standard non-compact
manifolds: manifolds with (poly)cylindrical ends, asymptotically
Euclidean manifolds, asymptotically hyperbolic manifolds, boundary
fibration structures, and others. We tried to write this section in
such a way that it can to a large extent be read independently of the
rest of the paper.  The reader interested only in applications can
start reading the paper with this section.

The general setting of this section is that of a (non-compact) smooth
manifold $M_0$ whose geometry is determined by a compactification $M$
to a manifold with corners and a Lie algebra of vector fields $\maV$
on $M$. (Although we shall not use this, let us mention for people
familiar with the concept, that $(M, \maV)$ will be a Lie manifold
with some additional properties.) The differential (and
pseudodifferential) operators considered and for which we obtain
Fredholm conditions are the ones generated by $\maV$ and $\CI(M)$,
that is, the ones in $\Diff(\maV)$ and its variants for vector
bundles. (Recall that $\Diff(\maV)$ was introduced in Remark
\ref{rem.aln1}.)  The proof of the Fredholm conditions in this section
are obtained as particular cases of Theorem \ref{thm.nonclassical2} by
showing that appropriate groupoids are Hausdorff \ssub\ Lie groupoids
with amenable isotropy groups and hence that they are Fredholm
groupoids, in view of Theorem \ref{thm.lemma.Exel}. Many of the
results below can also be obtained from the results in \cite{LMN1,
  LNGeometric}, but
  {the approach followed here aims to be more} convenient for
non-specialists.

In this section, we continue to denote by $\maG$ a Lie groupoid with
units $M$, a manifold of dimension $n$.  In the theorem yielding
Fredholm conditions, $M$ will be assumed {\bf compact} and endowed with a
{\em smooth} metric $h$.  We assume that $h$ is defined {\em
  everywhere} on $M$, in particular, that it extends to a smooth
manifold containing $M$ as a submanifold. Also, usually it will be no
loss of generality to assume $M$ connected. Our results are formulated 
for operators in $L_s^m(\maG; E, F)$, which is a suitable completion of
$\Psi^m(\maG; E, F)$, see Equations \eqref{eq.def.ms} and 
\eqref{eq.def.Cms}. This allows us to greatly enlarge the scope of 
our results since the completion procedure defining the spaces
$L_s^m$ leads to algebras that are closed under taking the inverses
of $L^2$-invertible elements.

\subsection{Examples related to group actions}
We include first some examples that are closely related to group
actions.

\subsubsection{The action of a group on a space}
Let us assume that Lie group $G$ acts smoothly on a manifold $M$. This
yields the \emph{transformation} (or {\em group action}) Lie groupoid
$\maG := M \rtimes G$, which, as a set, consists of $M \times G$ and
has units $Ma$. We have
\begin{equation*}
 d(x, g) \ede x\,, \ r(x, g) \ede g x\,, \ \mbox{
   and }\ (h x, g) (x, h) := (x, gh)\,.
\end{equation*}
If $M = G$ with $G$ acting by translations, then $M \rtimes G \simeq G
\times G$, the product groupoid. Therefore, if $G \subset M$ as a
dense, $G$-invariant open set, and $G$ acts on itself by left
translation, we are in the setting in which we can ask if the
resulting groupoid is Fredholm Lie groupoid. This groupoid was used in
\cite{GeorgescuNistor2,mantoiuGGroups,mougel1}. The groupoids used in
\cite{GeorgescuNistor2,mougel1} turn out to be \ssub\ groupoids. As
discussed in those papers, this recovers the classical HVZ-theorem
\cite{CFKS, Simon4, TeschlBook} as a particular case of Theorem
\ref{thm.nonclassical2} (with $U = G$).  We note that $M \times G$ is
always Hausdorff (since $M$ and $G$ are Hausdorff). If $V \subset M$
is an open subset, then the reduction groupoid $(M \rtimes G)_V^V$
will be called a {\em local transformation (or action)} groupoid, and
will also be Hausdorff. Many related results (including the non Lie
case), were also obtained by Bottcher, Chandler-Wilde, Karlovich,
Lindner, Rabinovich, Roch, Rozenblyum, Silberman, and many others. See
\cite{BastosFernandesKarlovich,BottcherKarlovichSpitkovsky,
  BottcherToeplitzBook, LindnerMem, Rozenblyum96, rabinovichRochGe08,
  RochBookLimit, Rab3, RochBookNGT} and the references therein.

This groupoid models pseudodifferential operators compatible with any
$G$ invariant metric on $G$. Let $\mathfrak{g}$ be the Lie algebra of
$G$. Then $A(\maG) = M \times \mathfrak{g}$, with $\mathfrak{g}$
acting on $M$ via the infinitesimal action of $G$. The associated Lie
algebra of vector fields is $\Lie(\maG) := \varrho(\Gamma(A(\maG))) =
\maV := \CI(M) \mathfrak{g} \subset \Gamma(T\Omega)$.  We have that
$\Diff(\maV)$ is generated by $\CI(M)$ and $\mathfrak{g}$.

\subsubsection{The $b$-groupoid}
Let $M$ be a manifold with corners. Then the $b$-groupoid of
\cite{LNGeometric,MelroseAPS, Monthubert,NWX} is defined as a set as
the disjoint union
\begin{equation}
 \maG_b \ede \sqcup_{F} (F \times F) \times (\RR_+^*)^{k_F}
 \simeq \sqcup_{F} (F \times F) \times \RR^{k_F} \tto \sqcup_{F} F \seq M \ ,
\end{equation}
where $F$ ranges through the open, {\em connected} faces of $M$, $k_F$
is the codimension of the face $F$, $F\times F$ is the pair groupoid
and $(\RR_+^*)^{k_F}$ is a group for componentwise multiplication (and
hence also a Lie groupoid).
  
To obtain the smooth structure on this groupoid, we notice that,
locally, it is a transformation groupoid $[0, \infty)^k \rtimes
  (\RR_+^*)^k$. More precisely, let us choose $g \in \maG_b$. Then $g
  = (x, y, v) \in (F \times F) \times \RR^{k_F}$ for some {\em
    connected} open face $F \subset M$. We can choose a coordinate
  system $V \subset M$ such that $x, y \in V$, $\overline{V}$ is
  compact in $F$, and we have a tubular neighborhood $V \times [0,
    \epsilon)^{k_F} \subset M$.  Then, a neighborhood of $g$ in
    $\maG_b$ is diffeomorphic to the local transformation groupoid
    obtained by reducing to $V \times [0, \epsilon)^{k_F}$ the action
      groupoid
\begin{equation*}
   (\RR^{d_F} \times [0, \infty)^{k_F}) \rtimes (\RR^{d_F} \times (0,
    \infty)^{k_F})\,,
\end{equation*}
where $d_F = n-k_F$ is the dimension of $F$. Here $\RR^{d_F}$ acts by
translations on itself and $\RR_+^* = (0, \infty)$ acts by
multiplication on $[0, \infty)$. In applications, it will be, in fact,
  more convenient to notice that $\RR^{d_F} \rtimes \RR^{d_F}$ is the
  pair groupoid, and hence to identify a neighborhood of $g$ with a
  reduction of
\begin{equation*}
   (V \times V) \times [0, \infty)^{k_F} \rtimes (0, \infty)^{k_F}\,,
\end{equation*}
the product of the pair groupoid $V \times V$ and the {transformation}
groupoid $[0, \infty)^{k_F} \rtimes (0, \infty)^{k_F}$.

If $M$ has embedded faces, that is, if each hyperface $H$ has a
defining function $r_H$, then we can identify $\maG_b$ with an open
subset of Monthubert's realization of the $b$-groupoid
\cite[Proposition 4.5]{Monthubert03}
\begin{equation*}
  \maM \ede \{\, (x,y,t)\in M\times M\times [-1,1]^\maH |\,
  (1-t_K)x_K(x) =(1+t_H)x_H(y)\, \}\,,
\end{equation*}
where $\maH$ denotes the set of {\em hyperfaces} of $M$ and $H, K \in
\maH$.  See also \cite{Lesch, VertmanLesch15, RouseLescureMont,
  RouseLescure, vanErpYuncken}.

We see that the $b$-groupoid is a \ssub\ gropoid as follows.  The set
$U_k$, $k \le n$, is defined as the union of the open faces of
codimension $k$ of $M$. Then $(\maG_b)_{U_k\setminus U_{k-1}} $ is
isomorphic to the topological disjoint union of the groupoids $(F
\times F) \times \RR^{k}$, where $F$ ranges through the set of open
faces of codimension $k$. In particular, $(\maG_b)_{U_k\setminus
  U_{k-1}} $ is isomorphic to the fibered pull-back of a bundle of Lie
groups, by Example \ref{ex.help-for-b}.  In particular,
$(\maG_b)_x^x\cong \RR^{n-k}$ is amenable, for all $x\in M$.

\subsubsection{Manifolds with poly-cylindrical ends}
The Lie algebroid of $\maG_b$ is identified by
\begin{equation}\label{eq.def.Vb}
 \Gamma(A(\maG_b)) \simeq \maV_b \ede \{ X\in \Gamma(M; TM)|\, X
 \mbox{ tangent to all faces of } M\}\,.
\end{equation}
Let $h$ be an ordinary metric on $M$. The general form of a compatible
metric on $M$ is then
\begin{equation}\label{eq.def.bmetric}
 g_b \ede h + \sum_{H \in \maH} \left( \frac{dx_H}{x_H} \right)^2 \,.
\end{equation}
Manifolds with metrics of this form will be called {\em manifolds with
  poly-cylindrical ends}, following \cite{MelroseScattering}.  The Lie
algebroid $A(\maG_b)$ is often denoted $T^bM$. The groupoid $\maG_b$
models pseudodifferential operators compatible with the metric
$g_b$. Thus, by definition, the metric $g_b$ comes from restriction
from the Lie algebroid $A(\maG_b)$, since $A(\maG_b) \vert_{M_0} =
TM_0$. Since the base $M$ is compact, all metrics on $A(\maG_b)$ will
be equivalent, so the constructions will not depend (essentially) on
the choice of the metric.  In particular, all geometric operators
associated to the metric $g_b$ (Laplace, Dirac, Hodge, ... ) will
belong to $\Psi^m(\maG_b; E, F)$ (which is independent of the metric),
for suitable $E$ and $F$. Moreover, it turns out that Theorem
\ref{thm.main.Fredholm} applies to $\maG_b$ and $\Psi^m(\maG_b; E,
F)$, provided that {\em $M$ is compact.}  (See also Remark
\ref{rem.aln1}. Further details can be found in \cite{aln1}.)

The statement of Theorem \ref{thm.Fredholm.Cond} can be (slightly)
simplified in this case by noticing the following. The representations
$\pi_x$ and $\pi_y$ are unitarily equivalent if $x$ and $y$ are in the
same open face $F$, in which case, they will act on $F \times
\RR^{k_F} \simeq \maG_x$. Let $\pi_F$ be the associated
representation. For Fredholm conditions, it is enough to consider the
invertibility of the operators $P_H := \pi_x(P)$, $x \in H$, for the
faces of maximal dimension (that is, for hyperfaces) in order to
obtain Fredholm conditions, since $\pi_x$ is weakly contained in
$\pi_y$ if $x$ is contained in the closure of the face containing
$y$. Let $\maH$ denote the set of hyperfaces of $M$, as before.
Recall that $L^m_s(\maG; E, F)$ is a suitable completion of
$\Psi\sp{m}(\maG; E, F)$, see Equations \eqref{eq.def.ms} and
\eqref{eq.def.Cms}. We obtain the following result
\cite{MelrosePiazza}.

\begin{corollary}\label{cor.pce}
Let $P \in L_s^m(\maG_b; E, F) \supset \Psi^m(\maG_b; E, F)$, $M$
compact. We have
\begin{multline*} 
	P : H^s(M; E) \to H^{s-m}(M; F) \mbox{ is Fredholm}
        \ \ \Leftrightarrow \ \ P \mbox{ is elliptic and all}\\
	\ P_{H } : H^s(H \times \RR; E) \to H^{s-m}(H \times \RR;
        F)\,, \ H \in \maH \,, \ \mbox{ are invertible}\,.
\end{multline*}
All geometric differential operators associated to the metric $g_b$
belong to $\Diff^m(\maV_b; E, F) \subset \Psi^m(\maG_b; E, F)$, for
suitable vector bundles $E, F \to M$.
\end{corollary}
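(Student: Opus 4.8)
The plan is to derive Corollary \ref{cor.pce} as a direct application of Theorem \ref{thm.main.Fredholm} to the $b$-groupoid $\maG_b$, followed by a reduction of the index set of limit operators from all boundary faces down to the hyperfaces. First I would verify that $\maG_b$ satisfies the hypotheses of Theorem \ref{thm.main.Fredholm}; this is already established in the preceding discussion. Indeed, $\maG_b$ is a \ssub\ Lie groupoid for the filtration $U_k$ given by the union of all open faces of codimension at most $k$, the open dense stratum $U_0 = M_0$ satisfies $(\maG_b)_{U_0} \simeq M_0 \times M_0$ (the pair groupoid), and the isotropy groups $(\maG_b)_x^x \cong \RR^{n-k}$ are abelian, hence amenable. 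Since $M$ is assumed compact, Theorem \ref{thm.main.Fredholm} applies verbatim and yields that $P$ is Fredholm if, and only if, $P$ is elliptic and every limit operator $P_x = \pi_x(P)$ is invertible for $x \in M \smallsetminus M_0$.

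It then remains to replace the condition ``$P_x$ invertible for all $x \in M \smallsetminus M_0$'' by the condition ``$P_H$ invertible for all $H \in \maH$.'' Two facts, both recorded in the paragraph preceding the corollary, drive this reduction. First, if $x$ and $y$ lie in the same open face $F$, there is an arrow $g \in \maG_b$ with $d(g) = x$ and $r(g) = y$, so $\pi_x$ and $\pi_y$ are unitarily equivalent; thus $P_x$ depends only on the open face of $x$, and for $x \in H$ a hyperface we write $P_H$, acting on $L^2(H \times \RR)$ as claimed. Second, $\pi_x$ is weakly contained in $\pi_y$ whenever $x$ lies in the closure of the open face containing $y$; since every positive-codimension open face $F$ satisfies $F \subset \overline{H}$ for some hyperface $H$, this gives $\pi_x \prec \pi_H$ for every boundary point $x$.

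I would then transfer invertibility across this weak containment. Writing $a := (1 + \Delta_F)^{(s-m)/2} P (1 + \Delta_E)^{-s/2} \in \overline{\Psi}(\maG_b; E, F)$ as in Proposition \ref{prop.nonclassical2}, the relation $\pi_x \prec \pi_H$ means $\ker \pi_H \subseteq \ker \pi_x$ on $\overline{\Psi}(\maG_b; E, F)$; hence there is a well-defined unital $*$-homomorphism $\psi$ from the $C\sp{\ast}$-subalgebra $\pi_H(\overline{\Psi}(\maG_b; E, F))$ onto $\pi_x(\overline{\Psi}(\maG_b; E, F))$ with $\psi \circ \pi_H = \pi_x$. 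By spectral permanence, invertibility of $\pi_H(a)$ in the ambient operator algebra is equivalent to its invertibility in that $C\sp{\ast}$-subalgebra, and since unital $*$-homomorphisms preserve invertibility, invertibility of $\pi_H(a)$ forces invertibility of $\pi_x(a)$. As $\pi_H(a)$ is invertible if, and only if, $P_H$ is invertible on the relevant Sobolev spaces (the conjugation by Laplacians being invertible, exactly as in the proof of Theorem \ref{thm.nonclassical2}), invertibility on all hyperfaces already implies invertibility at every boundary point. The converse inclusion is immediate, since the hyperfaces are among the boundary strata, so the full condition of Theorem \ref{thm.main.Fredholm} reduces to ellipticity together with invertibility of the $P_H$, $H \in \maH$. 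The final sentence of the corollary, that geometric operators lie in $\Diff^m(\maV_b; E, F) \subset \Psi^m(\maG_b; E, F)$, follows from Remark \ref{rem.aln1} together with the identification $\Gamma(A(\maG_b)) \simeq \maV_b$ in \eqref{eq.def.Vb}.

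The main obstacle is precisely the weak-containment step: one must be certain that the containment of kernels at the level of the reduced groupoid $C\sp{\ast}$-algebras (which reflects that the orbit of $x$ lies in the closure of the orbit of $y$, via the restriction maps of Proposition \ref{renault.exact}) genuinely propagates to the unitized pseudodifferential algebra $\overline{\Psi}(\maG_b)$ and transfers \emph{invertibility} rather than merely norm inequalities. This is where the unital nature of the limit representations and spectral permanence for $C\sp{\ast}$-algebras are indispensable, and I would take care to state the factorization of $\pi_x$ through $\pi_H$ explicitly at that point rather than relying on the weaker ``weakly contained'' terminology alone.
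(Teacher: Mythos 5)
Your proposal is correct and takes essentially the same route as the paper: the discussion preceding the corollary establishes that $\maG_b$ is a Hausdorff \ssub\ Lie groupoid with amenable (abelian) isotropy groups, applies Theorem \ref{thm.main.Fredholm} to get invertibility conditions at all boundary points, and then reduces to hyperfaces precisely via the unitary-equivalence-within-faces and weak-containment facts you cite. Your explicit factorization $\pi_x = \psi \circ \pi_H$ combined with spectral permanence (and the remark that kernel containment passes from the ideal $C^{\ast}(\maG_b)$ to $\overline{\Psi}(\maG_b)$ by nondegeneracy of the regular representations on that ideal) is a correct elaboration of the invertibility-transfer step that the paper leaves implicit.
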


If $P \in \Diff^m(\maV_{b}; E, F)$, the limit operators $P_x$, $x \in
\pa M$, are obtained as explained in Remark \ref{rem.ghost}.  They are
invariant with respect to the action of the isotropy group $\maG_x^x $
(this is always the case for operators of the form $\pi_x(a)$).  In
this example, only derivatives of the form $x_H \pa_{x_H}$ are ghost
derivatives (see Remark \ref{rem.ghost}).

\begin{remark}
 Carvalho and Qiao have constructed in \cite{carvalhoYu} a similar
 groupoid to the $b$-groupoid in order to study layer
 potentials. Their groupoid, however, was not $d$-connected, in
 general. Nevertheless, the above corollary generalizes to their
 setting, after some obvious modifications.
\end{remark}

\subsubsection{Asymptotically Euclidean spaces}
Let us assume that $M$ has a smooth boundary $\pa M$ with defining
function $x = x_{\pa M}$ and let $\maV_{sc} := r \maV_b$.  The
resulting differential operators $\Diff(\maV_{sc})$ and the associated
pseudodifferential ooperators are the SG-operators of
\cite{Coriasco,Parenti, SchroheSymbol, SchroheFrechet, SchroheSG}
(called ``scattering operators'' in \cite{MelroseScattering}). They
can be obtained by considering the groupoid
\begin{equation}
  \maG_{sc} \ede TM\vert_{\pa M} \sqcup (M_0 \times M_0) \tto \pa M
  \sqcup M_0 \seq M \,.
\end{equation}
To obtain a manifold structure on $\maG_{sc}$, let us consider first
$G = \RR^n$ and $M=$ the radial compactification of $G$ with the
induced action of $G$. Then $\maG_{sc} = M \rtimes G$ is Hausdorff and
a \ssub\ groupoid. In general, $\maG_{sc}$ is locally of this form
(and can be obtained by glueing reductions of such groupoids), and
hence it is Hausdorff.  It satisfies $Lie(\maG_{sc}) :=
\Gamma(A(\maG_{sc})) = \maV_{sc}.$ Thus, if $g_b$ denotes a $b$-metric
on $M$ (or rather, on $M_0$, then the natural metric associated to
$\maV_{sc}$ is
\begin{equation}\label{eq.def.scmetric}
  g_{sc} \ede x^{-2} g_b \ede x^{-2} \big (h + x^{-2} dx^2 \big ) \,,
\end{equation}
where $g_b$ is as in Equation
\eqref{eq.def.bmetric}.

We have that the orbits of $\maG$ on $F := M \smallsetminus U_0 = \pa
M$ are reduced to points and that each stabilizer $\maG_x^x = T_x M
\simeq \RR^n$, for $x \in F$. In particular, $\maG_x = \maG_x^x$, for
$x \in \pa M$ and all derivatives at the boundary are ghost
derivatives.  If $P \in \Diff^m(\maV_{sc}; E, F)$, the limit operators
$P_x$ are obtained as explained in Remark \ref{rem.ghost}.  Theorem
\ref{thm.main.Fredholm} becomes in our case:

\begin{corollary}\label{cor.sc}
Let $P \in L_s^m(\maG_{sc}; E, F) \supset \Psi^m(\maG_{sc}; E, F)$,
$M$ compact. We have
\begin{multline*} 
	P : H^s(M; E) \to H^{s-m}(M; F) \mbox{ is Fredholm}
        \ \ \Leftrightarrow \ \ P \mbox{ is elliptic and all}\\
	\ P_{x} : H^s(T_x M; E) \to H^{s-m}(T_x M; F)\,, \ x \in \pa M
        \,, \ \mbox{ are invertible}\,.
\end{multline*}
All geometric differential operators associated to the metric $g_{sc}$
belong to $\Diff^m(\maV_{sc}; E, F) \subset \Psi^m(\maG_{sc}; E, F)$,
for suitable vector bundles $E, F \to M$.
\end{corollary}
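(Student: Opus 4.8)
The plan is to verify that $\maG_{sc}$ satisfies the hypotheses of Theorem \ref{thm.main.Fredholm} and then simply read off the conclusion. The theorem requires three things: that $\maG_{sc}$ be a Hausdorff \ssub\ Lie groupoid, that it admit a filtration $(U_i)$ with $U_0$ dense and $(\maG_{sc})_{U_0} \simeq U_0 \times U_0$, and that all isotropy groups be amenable. The Hausdorff property and the local description as a transformation groupoid have already been recorded in the text preceding the statement (via the radial compactification of $\RR^n$ and glueing), so the first task is to make the \ssub\ structure explicit. Here the natural filtration is the two-step one $\emptyset \subset U_0 \subset U_1 = M$ with $U_0 := M_0$ the open interior and $F := M \smallsetminus U_0 = \pa M$. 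On $U_0$ the reduction is $(\maG_{sc})_{U_0} = M_0 \times M_0$, the pair groupoid, which is of fibered-pull-back type over a point with trivial fiber group; on the stratum $S = \pa M$ the reduction is $(\maG_{sc})_{\pa M} = TM\vert_{\pa M}$, which is a bundle of abelian Lie groups $T_x M \simeq \RR^n$ over $\pa M$, hence a fibered pull-back $f_S\pullback(G_S)$ with $f_S = \mathrm{id}_{\pa M}$ and $G_S = TM\vert_{\pa M}$. This exhibits $\maG_{sc}$ as a \ssub\ groupoid in the sense of Definition \ref{def.bfb}.

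Next I would check amenability of the isotropy groups. For $x \in U_0$ the isotropy is trivial (pair groupoid), and for $x \in \pa M$ we have $(\maG_{sc})_x^x = T_x M \simeq \RR^n$, which is abelian and therefore amenable; this is exactly the statement recorded in the text that ``all derivatives at the boundary are ghost derivatives'' and $\maG_x = \maG_x^x$. With all isotropy groups amenable and the \ssub\ structure in hand, Theorem \ref{thm.main.Fredholm} applies directly: for any smooth $E, F \to M$ and any $P \in L_s^m(\maG_{sc}; E, F)$, the operator $P : H^s(M; E) \to H^{s-m}(M; F)$ is Fredholm if and only if $P$ is elliptic and every limit operator $P_x = \pi_x(P)$, $x \in M \smallsetminus U_0 = \pa M$, is invertible. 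Since $\maG_x = T_x M$ for $x \in \pa M$, the limit operator acts on $H^s(T_x M; E) \to H^{s-m}(T_x M; F)$, which is precisely the statement of the corollary.

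Finally, the second sentence of the corollary — that all geometric operators associated to $g_{sc}$ lie in $\Diff^m(\maV_{sc}; E, F) \subset \Psi^m(\maG_{sc}; E, F)$ — follows from the identification $\Lie(\maG_{sc}) = \Gamma(A(\maG_{sc})) = \maV_{sc}$ recorded just above the statement, together with Remark \ref{rem.aln1}, which says that geometric operators built from a compatible metric belong to $\Diff^m(\maV; E, F)$ and satisfy $\Diff^m(\maV; E, F) = \Psi^m(\maG; E, F) \cap \Diff(M; E, F)$. The only genuine content to write out is therefore the verification of the \ssub\ structure on the boundary stratum; I expect the main (though still routine) obstacle to be confirming cleanly that $TM\vert_{\pa M}$ is a \emph{bundle of Lie groups} over $\pa M$ of the form required by Definition \ref{def.bfb} and that the identification $(\maG_{sc})_{\pa M} \simeq f_S\pullback(G_S)$ respects the smooth structure coming from the local transformation-groupoid charts. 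Once that compatibility is granted, the corollary is an immediate specialization of Theorem \ref{thm.main.Fredholm}, with no further analysis required.
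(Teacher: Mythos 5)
Your proposal is correct and follows essentially the same route as the paper: the paper likewise deduces Corollary \ref{cor.sc} by noting that $\maG_{sc}$ is a Hausdorff \ssub\ Lie groupoid (locally the transformation groupoid $M \rtimes \RR^n$ for $M$ the radial compactification of $\RR^n$), that the boundary stabilizers $\maG_x^x = T_xM \simeq \RR^n$ are amenable, and then specializing Theorem \ref{thm.main.Fredholm}, with the statement about geometric operators coming from $\Lie(\maG_{sc}) = \maV_{sc}$ and Remark \ref{rem.aln1}. Your only variation is cosmetic: you verify the \ssub\ structure by writing out the two-stratum filtration $U_0 = M_0$, $S = \pa M$ explicitly, whereas the paper gets it (together with the Hausdorff property and the smooth structure) from the local transformation-groupoid description.
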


We have $P_x = \pi_x(P)$ and $P_x$ is translation invariant (i.e.
constant coefficient, in this case), and hence can be studied using
the Fourier transform. In this case, all the vector fields yield ghost
derivatives.

\subsubsection{Asymptotically hyperbolic manifolds}

We continue to assume that $M$ is a manifold with smooth boundary $\pa
M$.  The groupoid $\maG_{ah}$ modeling asymptotically hyperbolic
spaces is chosen such that
\begin{equation*}
 Lie(\maG_{ah}) \simeq \Gamma(A(\maG_{ah})) \seq \maV_0 \ede \{X \in
 \Gamma(M; TM)\vert\, X\vert_{\pa M} = 0 \}\,.
\end{equation*}
Let $\maL_x := T_x(\pa M) \rtimes (0, \infty)$ be the semi-direct
product as in the previous subsection, with $(0, \infty)$ acting by
dilations on $T_x(\pa M)$. Let $\maL \to \pa M$ be the bundle of Lie
groups with fiber $\maL_x$. Then
\begin{equation*}
 \maG_{ah} \ede \maL \sqcup M_0 \times M_0 \tto \pa M \sqcup M_0\,.
\end{equation*}
The topology is again locally given by a {transformation}
groupoid. This can be seen in the case of $M = \RR^{n-1} \times [0,
  \infty)$ with the natural action of $G_n := \RR^{n-1} \rtimes (0,
  \infty)$ obtained by recalling that, as smooth manifolds, we have
  $\RR^{n-1} \rtimes (0, \infty) = \RR^{n-1} \times (0, \infty)$. This
  groupoid is a particular case of the {\em edge groupoid}, following
  next, so the reader can consult to the next section for more
  details.  We again have that $\maG_x = \maG_x^x =\maL_x$, if $x \in
  \pa M$, and that all derivatives are ghost derivatives at the
  boundary. The metric is
\begin{equation*}
 g_{ah} \ede x^{-2} h\,,
\end{equation*}
where $x$ is the distance to the boundary (close to the boundary) and
$h$ is an everywhere smooth metric on $M$, as before.

\begin{corollary}\label{cor.ah}
Let $s \in \RR$ and $P \in L_s^m(\maG_{ah}; E, F) \supset
\Psi^m(\maG_{ah}; E, F)$, with $M$ compact.  Denote $M_x := T_x \pa M
\times \RR.$ We have
\begin{multline*} 
	P : H^s(M; E) \to H^{s-m}(M; F) \mbox{ is Fredholm}
        \ \ \Leftrightarrow \ \ P \mbox{ is elliptic and all}\\
	\ P_{x} : H^s(M_x; E) \to H^{s-m}(M_x; F)\,, \ x \in \pa M \,,
        \ \mbox{ are invertible}\,.
\end{multline*}
All geometric differential operators associated to the metric $g_{ah}$
belong to $\Diff^m(\maV_{ah}; E, F) \subset \Psi^m(\maG_{ah}; E, F)$,
for suitable vector bundles $E, F \to M$.
\end{corollary}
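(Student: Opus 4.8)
The plan is to deduce Corollary \ref{cor.ah} directly from Theorem \ref{thm.main.Fredholm} by verifying, one by one, that $\maG_{ah}$ satisfies its hypotheses: that $\maG_{ah}$ is a Hausdorff \ssub\ Lie groupoid over the compact manifold $M$, equipped with a filtration whose dense open stratum is a pair groupoid and whose isotropy groups are amenable. Once these are in place, the Fredholm characterization is immediate, and the only remaining work is to identify the fibers $\maG_x$ with the manifolds $M_x$ appearing in the statement.

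First I would fix the two-step filtration $\emptyset \ede U_{-1} \subset U_0 \ede M_0 \subset U_1 \ede M$. The set $U_0 = M_0$ is open, dense, and $\maG_{ah}$-invariant, and by the very definition of $\maG_{ah}$ one has $(\maG_{ah})_{U_0} = M_0 \times M_0$, the pair groupoid of Example \ref{ex.pair}. The single boundary stratum is $S := U_1 \smallsetminus U_0 = \pa M$, and the reduction $(\maG_{ah})_{\pa M}$ is exactly the bundle of Lie groups $\maL \to \pa M$ with fiber $\maL_x = T_x(\pa M) \rtimes (0,\infty)$. Taking $B_S := \pa M$, the tame submersion $f_S := \mathrm{id}_{\pa M}$, and $G_S := \maL$, the defining identity $\maL \simeq f_S\pullback(\maL)$ of Definition \ref{def.bfb} holds because $d = r$ on a bundle of groups; hence $\maG_{ah}$ is a \ssub\ groupoid. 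Amenability is then recorded at once: each $\maG_x^x = \maL_x = T_x(\pa M) \rtimes (0,\infty)$ is an ``$ax+b$''-type solvable Lie group, hence amenable by Example \ref{ex.Lie.group}.

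With these structural facts, Theorem \ref{thm.main.Fredholm} applies and yields that $P$ is Fredholm if and only if it is elliptic and every limit operator $P_x = \pi_x(P)$, $x \in \pa M$, is invertible on $H^s(\maG_x; E) \to H^{s-m}(\maG_x; F)$. It then remains to identify the fiber: since $d = r$ on $\maL$, one has $\maG_x = \maG_x^x = \maL_x$, and as a smooth manifold $\maL_x = T_x(\pa M) \rtimes (0,\infty)$ is diffeomorphic to $T_x(\pa M) \times \RR = M_x$ via the identification $(0,\infty) \cong \RR$; under this diffeomorphism the compatible Sobolev spaces match, giving the invertibility condition on $H^s(M_x; E)$ as stated. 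Finally, the assertion that the geometric operators of $g_{ah}$ lie in $\Diff^m(\maV_{ah}; E, F) \subset \Psi^m(\maG_{ah}; E, F)$ follows from $\Lie(\maG_{ah}) = \maV_{ah} = \maV_0$ together with Remark \ref{rem.aln1}.

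The step I expect to be the main obstacle is the smoothness claim implicit above, namely that the set-theoretic disjoint union $\maL \sqcup (M_0 \times M_0)$ actually carries a smooth structure making it a \emph{Hausdorff Lie} groupoid, so that the boundary bundle of groups glues smoothly onto the interior pair groupoid. This is precisely the content of the local transformation-groupoid model, exhibiting a neighborhood of a boundary arrow as a reduction of $(\RR^{n-1} \times [0,\infty)) \rtimes G_n$ with $G_n = \RR^{n-1} \rtimes (0,\infty)$, and is the analytic heart shared with the edge and scattering groupoids treated in the adjacent subsections. Once this local model and its gluing are established, the remaining verifications are formal.
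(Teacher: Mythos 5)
Your proposal is correct and takes essentially the same approach as the paper: the paper also establishes that $\maG_{ah}$ is a Hausdorff \ssub\ Lie groupoid (viewing it as the $B = \pa M$ case of the edge groupoid, with smoothness coming from the local transformation-groupoid model you cite), notes that the isotropy groups $\maL_x = T_x(\pa M)\rtimes (0,\infty)$ are solvable hence amenable, and then specializes Theorem \ref{thm.main.Fredholm}. The only cosmetic difference is that you verify Definition \ref{def.bfb} directly with the two-stratum filtration and $f_S = \mathrm{id}_{\pa M}$, whereas the paper defers this to the edge-groupoid construction of Subsection \ref{ssec.edge}.
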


We thus need to study the invertibility of certain (right) invariant
operators on the group $G_{n-1} := \RR^{n-1} \rtimes (0, \infty)$, for
which standard methods of representation theory can be used.

\subsection{The edge groupoid}\label{ssec.edge}
This example is motivated by the results in \cite{Grieser, Grushin71,
  Mazzeo91, nistorDesing, SchulzeBook91}, where the original Fredholm
results on the edge calculus can also be found. It is a particular
case of the next example, that of a desingularization groupoid, but we
nevertheless treat it separately, for the benefit of the reader.  See
also \cite{dasguptaWongSG08, KrainerEdge14,VertmanHeatEdge15}. We
consider the following framework.

First, $M$ is a manifold with smooth boundary $\pa M$. We assume that
we are given a smooth fibration $\pi : \pa M \to B$, where $B$ is a
smooth manifold (thus without boundary). We fix a tubular neighborhood
$U$ of $\pa M$ in $M$: $U \simeq \pa M \times [0, 1)$.  Let $\maH := B
  \times B$ be the pair groupoid. We now construct the so called {\em
    edge groupoid} $\maG_e$ that will turn out to be a groupoid with
  Lie algebroid given by the set $\maV_e$ of vector fields on $M$ that
  are tangent to the fibers of $\pi : \pa M \to B$ (in particular,
  these vector fields are tangent to the boundary).  If $B=\pa M$,
  then we recover the groupoid that models asymptotically hyperbolic
  spaces.  We denote by $\Diff(\maV_e)$ the algebra of differential
  operators generated by $\maV_e$ and by multiplication with functions
  in $\CI(M)$.

Let $\maL := TB \rtimes \RR_+^* \to B$ be the bundle of Lie groups
obtained from $TB \to B$ (regarded as a bundle of commutative Lie
groups) by taking the semi-direct product with $\RR_+^* := (0,
\infty)$ acting by dilations on the fibers of $TB \to B$. Its pullback
$\pi\pullback(\maL)$ via $\pi : \pa M \to B$ is hence a Lie groupoid
with units $\pa M$. Let $M_0 := M \smallsetminus \pa M$ be the
interior of $M$. Then, as a set, the {\em edge groupoid} $\maG_e$ is
the disjoint union
\begin{equation}
 \maG_e \ede \pi\pullback (\maL)\ \sqcup\ (M_0 \times M_0) \tto \pa M
 \sqcup M_0 \ .
\end{equation}
To define the smooth structure on this groupoid, we could use either
the results in \cite{nistorJapan} or proceed directly in four steps as
follows.
\smallskip

{\em Step 1}.  We first consider the adiabatic groupoid $\maH_{ad}$ of
$\maH := B \times B$ (this is the {\em tangent groupoid} of
\cite{connesNCG}; see \cite{nistorDesing} for more details and
references).  The adiabatic groupoid $\maH_{ad}$ is a Lie groupoid
with units $B \times [0, \infty)$ and Lie algebroid $A(\maH_{ad}) = TB
  \times [0, \infty) \to B \times [0, \infty)$, which, as a vector
bundle, is the fibered pull-back of $A(\maH) = TB \to B$ to $B \times
[0, \infty)$ via the projection $B \times [0, \infty) \to B$.  The Lie
algebroid structure on the sections of $A(\maH_{ad})$ is not that of a
fibered pull-back Lie algebroid, but is given by $[X, Y](t) = t [X(t),
  Y(t)].$ As a set, $\maH_{ad}$ is the disjoint union
\begin{equation*}
  \maH_{ad} \, := \, A(\maH) \times \{0\} \, \sqcup \, \maH \times (0,
  \infty) \,.
\end{equation*}
The groupoid structure of $\maH_{ad}$ is such that $A(\maH) \times
\{0\}$ has the Lie groupoid structure of a bundle of Lie groups and
$\maH \times (0, \infty)$ has the product Lie groupoid structure with
$(0, \infty)$ the groupoid associated to a space (that is $(0,
\infty)$ has only units, and all orbits are reduced to a single
point). The smooth structure is obtained using the exponential
map. See also \cite{vanErpYuncken0}. \smallskip

{\em Step two.}\ Let $\pi : \pa M \to B$ be the given fibration
map. We denote also by $\pi$ the resulting map $\pa M \times [0,
  \infty) \to B \times [0, \infty)$. Then we consider the pullback Lie
    groupoid $\pi\pullback(\maH_{ad})$.
\smallskip

{\em Step three.}\ Let $\RR_{+}\sp{*} = (0, \infty)$ act by dilations
on the $[0, \infty)$ variable on $\pi\pullback (\maH_{ad})$ and
  consider the semi-direct product $\pi\pullback (\maH_{ad})\rtimes
  \RR_{+}\sp{*}$ \cite{DebordSkandalis}. As a set, it is the disjoint
  union of $ \pi\pullback (\maL)$ and of the pair groupoid of $\pa M
  \times (0, \infty)$.\smallskip


{\em Step four.}\ Let us identify the tubular neighborhood $U \subset
M$ of $\pa M$ with $\pa M \times [0, 1)$. Then we can consider the
  reduction $\maH' := \big( \pi\pullback (\maH_{ad})\rtimes
  \RR_{+}\sp{*}\big )_U ^U$. By the previous step, this reduction
  $\maH'$ is the disjoint union of $ \pi\pullback (\maL)$ and of the
  pair groupoid of $\pa M \times (0, 1)$, which we can view as a
  subset of $M_0 \times M_0$, the pair groupoid of $M_0$. We then glue
  the reduction $\maH' := \big( \pi\pullback (\maH_{ad})\rtimes
  \RR_{+}\sp{*}\big )_U^U$ with $M_0 \times M_0$ by identifying the
  reduction of $\maH'$ to $U \smallsetminus \pa M$ (which is the pair
  groupoid of $\pa M \times (0, 1)$, as we have seen), with its image
  in $M_0 \times M_0$. This glueing construction is, of course,
  nothing but a particular case of the glueing construction in
  \cite{Gualtieri} and \cite{nistorDesing}.\smallskip

In any case, we obtain right away from the definition that the edge
groupoid $\maG_e$ is a Hausdorff \ssub\ groupoid.  The set of units of
$\maG_e$ is $M$ and the representations $\pi_x $, $x \in \pa M$, are
equivalent precisely when they map to the same point in $B$ and they
act on $\pi^{-1}(b) \times \maL_b \simeq \pi^{-1}(b) \times T_b B
\times \RR$.

If $P \in \Diff(\maV_e)$ and $b \in B$, we can obtain the limit
operators $P_b := \pi_b(P)$ as follows.  The restriction of $P$ to an
infinitesimal neighborhood of $\pi^{-1}(b)$ in $M$ will have some
hidden (ghost) derivatives coming from the Lie algebra of the group
$\maL_b := T_bB \rtimes \RR_+^\infty$.  Let us choose local
coordinates $(y, z)$ on $\pa M$ that are compatible with $\pa M \to
B$, in the sense that $z$ comes from a coordinate system on $B$. Let
$x$ denote the defining function of $\pa M$.  Then locally, $\maV_e$
is generated by $x \pa_x$, $\pa_{y_j}$, $x \pa_{z_k}$, with $j$ and
$k$ (always) ranging through a suitable index set:
\begin{equation}
 \maV_e \seq \CI(M) x \pa_x + \sum_j \CI(M)\pa_{y_j} + \sum_j \CI(M)
 x\pa_{z_j}\,.
\end{equation}

With this notation, the Lie algebra $\Lie(\maL_b)$ is generated by the
vector fields $x \pa_x$ and $x \pa_{z_k}$.  These come from non-zero
vector fields in $\maV_e$ that restrict to 0 on $\pi^{-1}(b)$, they
are the ghost derivatives. We let the ghost derivatives act on
$\maL_b$, and thus we obtain a differential operator $P_b$ on
$\pi^{-1}(b) \times \maL_b$.  The metric on $M$ is then an {\em edge
  metric} in the sense of Mazzeo \cite{Mazzeo91}. It can be obtained
by patching together metrics for which $x \pa_x$, $\pa_{y_j}$, $x
\pa_{z_k}$ are an orthonormal set of vectors.

Theorem \ref{thm.nonclassical2} then becomes.

\begin{corollary}\label{cor.e}
Let $P \in L_s^m(\maG_{e}; E, F) \supset \Psi^m(\maG_{e}; E, F)$, $M$
compact.  We have
\begin{multline*} 
	P : H^s(M; E) \to H^{s-m}(M; F) \mbox{ is Fredholm}
        \ \ \Leftrightarrow \ \ P \mbox{ is elliptic and all}\\
	\ P_{b} : H^s(M_b; E) \to H^{s-m}(M_b; F)\,, \ b \in B \,,
        \ \mbox{ are invertible}\,,
\end{multline*}
where $M_b := \pi^{-1}(b) \times T_bB \times \RR.$ All geometric
differential operators associated to $g_e$ belong to
$\Diff^m(\maV_{e}; E, F) \subset \Psi^m(\maG_e; E, F)$, for suitable
$E, F \to M$.
\end{corollary}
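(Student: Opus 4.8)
The plan is to obtain this corollary as an immediate application of Theorem \ref{thm.main.Fredholm}, so the work is entirely in verifying its hypotheses for $\maG_e$ and then transporting the conclusion. The construction in Steps 1--4 already exhibits $\maG_e$ as a Hausdorff \ssub\ Lie groupoid with compact space of units $M$. I would set $U_0 := M_0 = M \smallsetminus \pa M$, which is open and dense in $M$, and observe that by the very definition of $\maG_e$ as a disjoint union, $(\maG_e)_{U_0} = M_0 \times M_0$ is the pair groupoid. This secures the density and pair-groupoid requirements of Theorem \ref{thm.main.Fredholm}, together with the filtration by the $U_i$ that comes from the \ssub\ structure.

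The next step is to check that every isotropy group $(\maG_e)_x^x$ is amenable. For $x \in M_0$ the isotropy is trivial. For $x \in \pa M$ with $b := \pi(x)$, the fibered pull-back structure of $\pi\pullback(\maL)$ identifies $(\maG_e)_x^x \simeq \maL_b = T_bB \rtimes \RR_+^*$, the semidirect product of the abelian group $T_bB$ with $\RR_+^*$ acting by dilations. Since $\maL_b$ is a solvable Lie group, it is amenable by Example \ref{ex.Lie.group}. Thus all hypotheses of Theorem \ref{thm.main.Fredholm} are met, and that theorem yields at once that $P \in L_s^m(\maG_e; E, F)$ is Fredholm if, and only if, $P$ is elliptic and every $P_x = \pi_x(P)$, $x \in \pa M$, is invertible.

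It remains to reindex from $\pa M$ to $B$ and to name the carrier spaces. I would invoke the observation recorded just before the statement: $\pi_x$ and $\pi_y$ are unitarily equivalent whenever $\pi(x) = \pi(y) =: b$, so choosing one representative per fiber of $\pi$ reduces the invertibility condition to that of $P_b := \pi_b(P)$ for $b \in B$. For the space on which $\pi_b$ acts, the $d$-fiber over such an $x$ is
\begin{equation*}
 (\maG_e)_x \seq \pi^{-1}(b) \times \maL_b \seq \pi^{-1}(b) \times T_bB \times \RR \ede M_b\,,
\end{equation*}
the last identification using $\maL_b \simeq T_bB \times \RR$ as a smooth manifold (with $\RR_+^* \simeq \RR$ via the logarithm). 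This matches the statement. The closing assertion about geometric operators follows since $\Lie(\maG_e) = \maV_e$ and every geometric operator for an edge metric $g_e$ lies in $\Diff^m(\maV_e; E, F) \subset \Psi^m(\maG_e; E, F)$, as in Remark \ref{rem.aln1}.

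The step requiring the most care is the final identification of $P_b$: one must track, via Remark \ref{rem.ghost}, how the ghost derivatives spanning $\Lie(\maL_b)$ --- here $x\pa_x$ and the $x\pa_{z_k}$ --- assemble into a genuine differential operator on $M_b$, and confirm that the unitary equivalences $\pi_x \simeq \pi_y$ over a common $b$ are compatible with the $L_s^m$-completion, so that Fredholmness really depends only on $b$. By contrast, the \ssub\ and amenability checks are routine once the four-step construction is in hand; the substantive content is inherited wholesale from Theorem \ref{thm.main.Fredholm}.
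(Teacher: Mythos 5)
Your proposal is correct and follows essentially the same route as the paper: the paper establishes in the four-step construction that $\maG_e$ is a Hausdorff \ssub\ Lie groupoid whose boundary isotropy groups are the solvable (hence amenable) groups $\maL_b = T_bB \rtimes \RR_+^*$, notes that $\pi_x$ and $\pi_y$ are unitarily equivalent exactly when $\pi(x)=\pi(y)=b$ with carrier space $\pi^{-1}(b)\times T_bB\times\RR$, and then states the corollary as a direct specialization of Theorem \ref{thm.nonclassical2} (equivalently, of Theorem \ref{thm.main.Fredholm}, which you invoke). Your verification of the pair-groupoid condition on $U_0 = M_0$, the amenability check, the reindexing over $B$, and the closing appeal to $\Lie(\maG_e)=\maV_e$ and Remark \ref{rem.aln1} all match the paper's argument.
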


If $B$ is reduced to a point and $\pa M$ is connected, the groupoid
$\maG_e$ constructed in the last subsection recovers the groupoid of
the $b$-calculus: $\maG_e = \maG_b$. It models in this case manifolds
with cylindrical ends. If $B = \pa M$, the corresponding groupoid
models ``asymptotically hyperbolic'' spaces.

\subsection{Desingularization groupoids}

One of the nice features of the class of \ssub\ groupoids is that it
is invariant with respect to desingularization along suitable
submanifolds.  In this subsection, we will give an {\em ad hoc}
argument for this statement.  Recall the fibered pull-back
$\pi\pullback (B)$ of Example \ref{ex.pullback}. We proceed in a
slightly greater generality than in \cite{nistorDesing}, to which we
refer for more details and for the unexplained arguments.

Let us assume that $M$ is a manifold with corners, that $H$ is a
hyperface of $M$ and that we are given a {\em tame} submersion of
manifolds with corners $\pi : H \to B$. Let $U \simeq H \times [0, 1)$
  be a tubular neighborhood of $H$ in $M$. The hyperface $H$ will play
  the role played by the boundary in the previous examples. Typically,
  $M$ will be the result of a desingularization procedure, such as a
  blow-up (see Remark \ref{rem.desing} and the following section).

We assume that we are given a Lie groupoid $\maG$ on $M \smallsetminus
H$ and a Lie groupoid $\maH \to B$ such that the following is
satisfied.\smallskip

\noindent {\bf Local fibered pull-back structure assumption:} {\em The
  reduction of $\maG$ to $H \times (0, 1)$ is isomorphic to the
  pullback $p\pullback (\maH)$ via the map $p := \pi \circ p_1: H
  \times (0, 1) \to B$.}
\smallskip

We fix $\maH$ as in the above assumption throughout this
subsection. We define then a groupoid $\maK \tto M$ such that, as a
set, it is the disjoint union
\begin{equation*}
 \maK \ede \pi\pullback (\maA(\maH )\rtimes \RR^*_+)\ \sqcup\ \maG
 \tto H \sqcup M \smallsetminus H .
\end{equation*}
The groupoid $\maH$ replaces the pair groupoid $B \times B$ of the
example of the edge groupoid. We proceed as in that example to
consider the adiabatic groupoid $\maH_{ad}$ of $\maH$, which has units
$B \times [0, \infty)$.  We pull back this groupoid to a groupoid
  $\pi\pullback(\maH_{ad})$ using the map $\pi_1 : H \times [0,
    \infty) \to B \times [0, \infty)$, and then we consider the
      semi-direct product $\pi_1\pullback (\maH_{ad}) \rtimes \RR^*_+
      = \pi_1\pullback (\maH_{ad} \rtimes \RR^*_+)$ with $\RR_+^*$
      acting by dilations on $[0, \infty)$.

As in the previous example of the edge groupoid, we view $U$ as an
open subset of $H \times [0, \infty)$ and we consider the groupoid
$\maH'$ defined as the reduction to $U$ of $\pi\pullback (\maH_{ad})
\rtimes \RR^*_+$, with $\maH$ as in the local fibered pull-back
assumption.  The construction is completed as in the fourth step of
the edge groupoid. That is, we use the invariant subset $H \times
\{0\}$ of the units of $\maH'$ to write the groupoid as a disjoint
union using Remark \ref{rem.disjoint}. The reduction of $\maH'$ to the
complement of $H \times \{0\}$, that is, to $H \times (0, 1)$, is
(isomorphic to) the fibered pull-back $p\pullback(\maH)$ of $\maH$ to
$H \times (0, 1)$, by the local fibered pull-back structure
assumption.  We can view this fibered pull-back $p\pullback(\maH)$ as
a subset of $\maG$. This gives that we can glue $\maH'$ and $\maG$
along the common open groupoid $p\pullback(\maH)$ to obtain a groupoid
$\maK$.

It can be proved that if $\maG$ is Fredholm, then $\maK$ is also
Fredholm. Moreover, if $\maG$ is a \ssub\ groupoid, then $\maK$ will
also be a \ssub\ groupoid.

\begin{remark}\label{rem.desing}
Typically, we start with a Lie groupoid $\maG'\tto M'$ and $L \subset
M'$ is a submanifold with corners. Then $M := [M':L]$, the blow-up of
$M'$ with respect to $L$ and $H$ is the hyperface corresponding to $L$
in this blow-up, with $\pi: H\to L$ the blow-down map. Finally, $\maG$
is the reduction of $\maG'$ to the complement of $L$. Then we denote
$[[ \maG': L]] := \maK$. This is the {\em desingularization}
construction from \cite{nistorDesing}.
\end{remark}

\subsection{Desingularization and singular spaces}\label{sec.ss}
Let us show how to use the desingularization construction in some
typical examples.  By $M_k$ and $L_k$ we will denote manifolds with
corners of depths $k$. Thus $M_0$ and $L_0$ will have, in fact, no
corners or boundary (hence they will be ``smooth'').

\subsubsection{The desingularization of a smooth submanifold}
\label{sssec.ex.smooth}
The simplest example of a desingularization groupoid 
is the desingularization of the pair groupoid 
$\maG_0 := M_0 \times M_0$ with respect to a smooth
submanifold $L_0 \subset M_0$. Recall that $M_0$ is also smooth. Thus 
neither $M_0$ nor $L_0$ have corners. The example of this
subsection is a particular case of the edge groupoid, and hence it is
related to the edge calculus.  \cite{Grushin71, Mazzeo91, SchulzeIE,
  SchulzeBook91}.

Let $N$ be the normal bundle of $L_0$ in $M_0$ and denote by $S
\subset N$ the set of unit vectors in $N$, that is, $S$ is the unit
sphere bundle of the normal bundle of $L$ in $M_0$.  We let $\pi : S
\to L_0$ be the natural projection. Then the blow-up $M_1 := [M_0 :
  L_0]$ of $M_0$ with respect to $L_0$ is the disjoint union
\begin{equation*}
 M_1 \, := \, [M_0 : L_0] \, := \, (M_0 \smallsetminus L_0) \sqcup S \,,
\end{equation*}
with the structure of a manifold with smooth boundary $S$.  We let
$\maG_1$ to be the associated edge groupoid introduced in Subsection
\ref{ssec.edge}. It is a Lie
groupoid with base $M_1 := [M_0: L_0]$. We are moreover in the
framework of Remark \ref{rem.desing}, so $\maG_1 = [[\maG_0:L_0]]$.
The filtration of $M_1$ has two sets, namely, $U_1 = M_1$ and $U_0 :=
M_0 \smallsetminus L_0 \subset M_1$, both of which are open and
invariant for $\maG_1$ (but $U_0$ is not invariant for $\maG_0$, in
general). 

It turns out that $\maG_1$ is a \ssub\ Lie groupoid
if we consider the two strata $U_0$ and $S$.  Also,
$(\maG_0)_{U_0}\sp{U_0} = U_0 \times U_0$ and $\maG_S$ is the fibered
pull-back of a bundle of Lie groups $\maL \to S$, as described in
Subsection \ref{ssec.edge}.

\subsubsection{The desingularization of a submanifold with boundary}
\label{ssec.bdry}
We now generalize he last construction to manifolds with boundary.
Let $M_1$ be a compact manifold with smooth boundary.  We denote by $F
:= \pa M_1$ its boundary and by $G := M_1 \smallsetminus F$ its
interior. On $M_1$ we consider the Lie algebra of vector fields
$\maV_b$ {\em tangent} to $\partial M_1$, as before. It is the space
of sections of $T\sp{b}M_1$, the ``$b$-tangent bundle''
\cite{MelroseAPS, MelroseScattering} of $M_1$.  Let $\maG_1 :=
\maG_b$, as defined in the previous section.

Let $L_1 \subset M_1$ be an embedded smooth submanifold assumed to be
such that its boundary is $\pa L_1 = L_1 \cap \pa M_1$ and such that
$L_1$ intersects $\pa M_1$ {\em transversely}.  Then $L_1$ has a
tubular neighborhood $U$ in $M_1$, and hence we can consider the
blow-up $M_2 := [M_1: L_1]$, which will be a manifold with corners of
codimension 2. Moreover, the reduction of $\maG_1 = \maG_b$ to $U
\smallsetminus L_1$ satisfies the local fibered pull-back structure
assumption, since it is the $b$-groupoid of $U \smallsetminus L_1$. In
view of Remark \ref{rem.desing}, we can then define $\maG_2 :=
[[\maG_1 : L_1]]$.

Again it turns out that $\maG_2$ is a \ssub\ Lie manifold; indeed,
this is seen by choosing the following filtration of $M_2$ with three
sets:
\begin{equation*}
    U_0 \, := \, M_1 \smallsetminus (L_1 \cup \pa M_1) \ \subset \ U_1
    \ede M_1 \smallsetminus L_1 \ \subset \ U_2 \, := \, M_2 \,.
\end{equation*}
The sets $U_j$ are open and $\maG_2$ invariant (but not $\maG_1$
invariant). The sets $U_0$ and $U_1$ are $\maG_2$-invariant by
construction. Assume they are connected, for simplicity. The
restriction $(\maG_2)_{U_1}$ coincides with the reduction
$(\maG_1)_{U_1}\sp{U_1}$ by the definition of the desingularization
groupoid (Remark \ref{rem.desing}). In particular, $(\maG_2)_{U_0} =
(\maG_2)_{U_0}\sp{U_0} = (\maG_1)_{U_0}\sp{U_0} = U_0^2$ and
\begin{equation*}
  (\maG_2)_{U_1 \smallsetminus U_0} \ede (\maG_1)_{U_1 \smallsetminus
    U_0} \sp{U_1 \smallsetminus U_0}\, = \, (U_1 \smallsetminus
  U_0)\sp{2} \times \RR_{+}\sp{*} \,,
\end{equation*}
where $U_0\sp{2}$ and $(U_1 \smallsetminus U_0)\sp{2}$ are pair
groupoids. In the general case, if $U_1 \smallsetminus U_0$ is not
connected, we write $U_1 \smallsetminus U_0 = \sqcup V_j$ as the
disjoint union decomposition of its connected components, then we have
$(\maG_2)_{U_1 \smallsetminus U_0} \ede \sqcup_j V_j\sp{2} \times
\RR_{+}\sp{*}$.  Let us denote as in the boundaryless case by $S$ the
unit sphere bundle of the normal bundle to $L_1$ in $M_1$. (The set
$S$ is the last stratum $U_2 \smallsetminus U_1$ in $M_2$.)  Then the
restriction of $\maG_1$ to $S := M_2 \smallsetminus U_1$ is the
following fibered fibered pull-back groupoid. Let $\pi : S \to L$ be
the natural projection, as before. Let again $G_{S} := T\sp{b} L
\rtimes \RR_{+}\sp{*} \to L$ be the group bundle over $L$ obtained by
taking the direct product of the $b$-tangent bundle to $L$ with the
action of $\RR_{+}\sp{*}$ by dilation on the fibers of $TL \to
L$. Then
\begin{equation*}
 \maG_S \ede \pi\pullback(G_S)\,.
\end{equation*}

\subsubsection{Desingularization of a stratified subset of dimension one}
We now deal with a slightly more complicated example by combining the
two previous examples. We thus introduce the groupoid that is obtained
from the desingularization of a stratified subset of dimension one.
Full details as well as applications will be included in a forthcoming
paper with Mihai Putinar.

Let $M_0$ be a smooth, compact manifold (so no corners). Let $L_0 :=
\{P_1, P_2, \ldots, P_k\}\subset M_0$ and let us assume that we are
given a subset $\maS \subset M_0$ such that
 \begin{equation}
 \maS \, = \, L_0 \, \cup \, \cup_{j=1}\sp{l} \gamma_j,
 \end{equation}
 where each $\gamma_j$ is the image of a smooth map $c_j : [0, 1] \to
 M_0$, with the following properties:
 \begin{enumerate}[(i)]
  \item $c_j'(t) \neq 0$,
  \item $c_j(0), c_j(1) \in L_0 := \{P_1, P_2, \ldots, P_k\}$,
  \item $c_j((0, 1))$ are disjoint and do not intersect $L_0$ and
  \item the vectors $c_j'(0)$ and $c_j'(1)$, $j = 1, \ldots, l$,
    are all distinct.
 \end{enumerate}
 
We now introduce the desingularization of $\maG_0 := M_0 \times M_0$
(the pair groupoid) with respect to $\maS$. The set $L_0 := \{P_1,
P_2, \ldots, P_k\} \subset M_0$ satisfies the assumptions of
\ref{sssec.ex.smooth}. We can first define $\maG_1$ to be the
desingularization of $\maG_0$ with respect to $L_0$ as in that
example:
\begin{equation}
 \maG_1 \, := \, [[\maG_0 : L_0]] \,,
\end{equation}
which is a groupoid with units $M_1 := [M_0: L_0]$.  The smooth maps
$c_j$ then lift to smooth maps
\begin{equation*}
 \tilde c_j : [0, 1] \to [M_0: L_0]\,.
\end{equation*}
The assumption that the vectors $c_j'(0)$ and $c_j'(1)$, $j = 1,
\ldots, l$, are all distinct then gives that the sets $\tilde \gamma_j
:= \tilde c_j([0, 1])$ are all disjoint and intersect the boundary of
$M_1$ transversally. Let $L_1$ be the disjoint union of the embedded
curves $\tilde \gamma_j$. Then we can perform a further
desingularization along $L_1$, as in \ref{ssec.bdry}, thus obtaining
\begin{equation*}
 \maG_2 \, := \, [[\maG_1 : L_1]] \,,
\end{equation*}
which is a boundary fibration Lie groupoid. The Lie groupoid $\maG_2$
is the {\em desingularization} groupoid of $M_0$ with respect to
$\maS$.  Its structure is given as in the previous subsection.

This example can be extended to higher dimensional cases by using
clean intersecting families.

\subsubsection*{Acknowledgements} We thank Vladimir Georgescu,
Marius Mantoiu, and Wolfgang Schulze for useful discussions.

\bibliographystyle{plain}

\begin{thebibliography}{100}

\bibitem{ammann.grosse:16b} B.~Ammann and N.~Gro\ss~e.  \newblock
  {$L^p$}-spectrum of the {D}irac operator on products with hyperbolic
  spaces.  \newblock {\em Calc. Var. Partial Differential Equations},
  55(5):127--163, 2016.

\bibitem{sobolev} B.~Ammann, Alexandru~D. Ionescu, and V.~Nistor.
  \newblock Sobolev spaces on {L}ie manifolds and regularity for
  polyhedral domains.  \newblock {\em Doc. Math.}, 11:161--206
  (electronic), 2006.

\bibitem{aln1} B.~Ammann, R.~Lauter, and V.~Nistor.  \newblock On the
  geometry of {R}iemannian manifolds with a {L}ie structure at
  infinity.  \newblock {\em Int. J. Math. Math. Sci.}, (1-4):161--193,
  2004.

\bibitem{aln2} B.~Ammann, R.~Lauter, and V.~Nistor.  \newblock
  Pseudodifferential operators on manifolds with a {L}ie structure at
  infinity.  \newblock {\em Ann. of Math. (2)}, 165(3):717--747, 2007.

\bibitem{alnv} B.~Ammann, R.~Lauter, V.~Nistor, and A.~Vasy.
  \newblock Complex powers and non-compact manifolds.  \newblock {\em
    Comm. Partial Differential Equations}, 29(5-6):671--705, 2004.

\bibitem{BastosFernandesKarlovich} M.A. Bastos, C.~Fernandes, and
  Yu.I. Karlovich.  \newblock A ${C}^*$-algebra of singular integral
  operators with shifts admitting distinct fixed points.  \newblock
  {\em J. Math. Anal. Appl.}, 413(1):502--524, 2014.

\bibitem{MantoiuBB} I.~Belti\c~t\u a, D.~Belti\c~t\u a, and
  M.~M\u~antoiu.  \newblock Symbol calculus of square-integrable
  operator-valued maps.  \newblock {\em Rocky Mountain J. Math.},
  46(6):1795--1851, 2016.

\bibitem{KarstenBMB} K.~Bohlen.  \newblock Boutet de {M}onvel
  operators on {L}ie manifolds with boundary.  \newblock Preprint
  arXiv:1507.01543.

\bibitem{KarstenCR} K.~Bohlen.  \newblock Boutet de {M}onvel operators
  on singular manifolds.  \newblock {\em
    C. R. Math. Acad. Sci. Paris}, 354(3):239--243, 2016.

\bibitem{BottcherKarlovichSpitkovsky} A.~B{\"o}ttcher,
  Yu.~I. Karlovich, and I.~M. Spitkovsky.  \newblock The
  {$C^*$}-algebra of singular integral operators with semi-almost
  periodic coefficients.  \newblock {\em J. Funct. Anal.},
  204(2):445--484, 2003.

\bibitem{BottcherToeplitzBook} A.~B{\"o}ttcher and B.~Silbermann.
  \newblock {\em Analysis of {T}oeplitz operators}.  \newblock
  Springer Monographs in Mathematics. Springer-Verlag, Berlin, second
  edition, 2006.  \newblock Prepared jointly with Alexei Karlovich.

\bibitem{buneciSurvey} M.~Buneci.  \newblock Groupoid
  {$C^\ast$}-algebras.  \newblock {\em Surv. Math. Appl.}, 1:71--98
  (electronic), 2006.

\bibitem{RouseLescure} P.~Carrillo-Rouse and J.-M. Lescure.  \newblock
  Geometric obstructions for fredholm boundary conditions for
  manifolds with corners.  \newblock ArXiv preprint, 2017.

\bibitem{carvalhoYu} C.~Carvalho and Yu~Qiao.  \newblock Layer
  potentials {$C^*$}-algebras of domains with conical points.
  \newblock {\em Cent. Eur. J. Math.}, 11(1):27--54, 2013.

\bibitem{LindnerMem} S.~Chandler-Wilde and M.~Lindner.  \newblock
  Limit operators, collective compactness, and the spectral theory of
  infinite matrices.  \newblock {\em Mem. Amer. Math. Soc.},
  210(989):viii+111, 2011.

\bibitem{connesNCG} A.~Connes.  \newblock Non commutative differential
  geometry.  \newblock {\em Publ. Math. IHES}, 62:41--144, 1985.

\bibitem{CordesBook} H.~Cordes.  \newblock {\em Spectral theory of
  linear differential operators and comparison algebras}, volume~76 of
  {\em London Mathematical Society Lecture Note Series}.  \newblock
  Cambridge University Press, Cambridge, 1987.

\bibitem{CordesHerman68} H.~O. Cordes and E.~A. Herman.  \newblock
  Gel\cprime fand theory of pseudo differential operators.  \newblock
  {\em Amer. J. Math.}, 90:681--717, 1968.

\bibitem{Coriasco} S.~Coriasco and L.~Maniccia.  \newblock On the
  spectral asymptotics of operators on manifolds with ends.  \newblock
  {\em Abstr. Appl. Anal.}, pages Art. ID 909782, 21, 2013.

\bibitem{CFKS} H.~L. Cycon, R.~G. Froese, W.~Kirsch, and B.~Simon.
  \newblock {\em Schr\"odinger operators with application to quantum
    mechanics and global geometry}.  \newblock Texts and Monographs in
  Physics. Springer-Verlag, Berlin, study edition, 1987.

\bibitem{DamakGeorgescu} M.~Damak and V.~Georgescu.  \newblock
  Self-adjoint operators affiliated to {$C^*$}-algebras.  \newblock
  {\em Rev. Math. Phys.}, 16(2):257--280, 2004.

\bibitem{dasguptaWongSG08} A.~Dasgupta and M.W. Wong.  \newblock
  Spectral theory of {SG} pseudo-differential operators on {$L^p(\Bbb
    R^n)$}.  \newblock {\em Studia Math.}, 187(2):185--197, 2008.

\bibitem{daugeBook} M.~Dauge.  \newblock {\em Elliptic boundary value
  problems on corner domains}, volume 1341 of {\em Lecture Notes in
  Mathematics}.  \newblock Springer-Verlag, Berlin, 1988.  \newblock
  Smoothness and asymptotics of solutions.

\bibitem{DLR} C.~Debord, J.-M. Lescure, and F.~Rochon.  \newblock
  Pseudodifferential operators on manifolds with fibred corners.
  \newblock preprint arXiv:1112.4575, to appear in Annales de
  l'Institut Fourier.

\bibitem{DebordSkandalis} C.~Debord and G.~Skandalis.  \newblock
  Adiabatic groupoid, crossed product by {$\mathbb{R}_+^\ast$} and
  pseudodifferential calculus.  \newblock {\em Adv. Math.},
  257:66--91, 2014.

\bibitem{Dixmier} J.~Dixmier.  \newblock {\em Les {$C^*$}-alg\`ebres
  et leurs repr\'esentations}.  \newblock Les Grands Classiques
  Gauthier-Villars. [Gauthier-Villars Great Classics]. \'Editions
  Jacques Gabay, Paris, 1996.  \newblock Reprint of the second (1969)
  edition.

\bibitem{ExelInvGr} R.~Exel.  \newblock Invertibility in groupoid
  {$C^*$}-algebras.  \newblock In {\em Operator theory, operator
    algebras and applications}, volume 242 of {\em Oper. Theory
    Adv. Appl.}, pages 173--183. 2014.

\bibitem{Favard} J.~Favard.  \newblock Sur les \'equations
  diff\'erentielles lin\'eaires \`a coefficients
  presque-p\'eriodiques.  \newblock {\em Acta Math.}, 51(1):31--81,
  1928.

\bibitem{GI2} V.~Georgescu and A.~Iftimovici.  \newblock Crossed
  products of {$C^\ast$}-algebras and spectral analysis of quantum
  {H}amiltonians.  \newblock {\em Comm. Math. Phys.}, 228(3):519--560,
  2002.

\bibitem{GeorgescuNistor2} V.~Georgescu and V.~Nistor.  \newblock On
  the essential spectrum of {$N$}-body hamiltonians with
  asymptotically homogeneous interactions.  \newblock
  http://arxiv.org/abs/1506.03267 [math.SP], to appear in J. Oper.
  Theory.

\bibitem{GBVF} J.~Gracia-Bond\'ia, J.~V\'arilly, and H.~Figueroa.
  \newblock {\em Elements of noncommutative geometry}.  \newblock
  Birkh\"auser Advanced Texts: Basler Lehrb\"ucher. [Birkh\"auser
    Advanced Texts: Basel Textbooks]. Birkh\"auser Boston, Inc.,
  Boston, MA, 2001.

\bibitem{Grieser} D.~Grieser and E.~Hunsicker.  \newblock
  Pseudodifferential operator calculus for generalized
  {$\mathbb{Q}$}-rank 1 locally symmetric spaces. {I}.  \newblock {\em
    J. Funct. Anal.}, 257(12):3748--3801, 2009.

\bibitem{Grosse.Schneider.2013} N.~Gro{\ss}e and C.~Schneider.
  \newblock Sobolev spaces on {R}iemannian manifolds with bounded
  geometry: general coordinates and traces.  \newblock {\em
    Math. Nachr.}, 286(16):1586--1613, 2013.

\bibitem{Grushin71} V.~V. Gru{\v{s}}in.  \newblock A certain class of
  elliptic pseudodifferential operators that are degenerate on a
  submanifold.  \newblock {\em Mat. Sb. (N.S.)}, 84 (126):163--195,
  1971.

\bibitem{Gualtieri} M.~Gualtieri and Songhao Li.  \newblock Symplectic
  groupoids of log symplectic manifolds.  \newblock {\em
    Int. Math. Res. Not. IMRN}, (11):3022--3074, 2014.

\bibitem{HigginsMackenzie1} Ph. Higgins and K.~Mackenzie.  \newblock
  Algebraic constructions in the category of {L}ie algebroids.
  \newblock {\em J. Algebra}, 129(1):194--230, 1990.

\bibitem{HigginsMackenzie2} Ph. Higgins and K.~Mackenzie.  \newblock
  Fibrations and quotients of differentiable groupoids.  \newblock
  {\em J. London Math. Soc. (2)}, 42(1):101--110, 1990.

\bibitem{hormander3} L.~H{\"o}rmander.  \newblock {\em The analysis of
  linear partial differential operators. {III}}.  \newblock Classics
  in Mathematics. Springer, Berlin, 2007.  \newblock
  Pseudo-differential operators, Reprint of the 1994 edition.

\bibitem{ionescuWilliamsEHC} M.~Ionescu and D.~Williams.  \newblock
  The generalized {E}ffros-{H}ahn conjecture for groupoids.  \newblock
  {\em Indiana Univ. Math. J.}, 58(6):2489--2508, 2009.

\bibitem{KSkandalis} M.~Khoshkam and G.~Skandalis.  \newblock Regular
  representation of groupoid {$C^*$}-algebras and applications to
  inverse semigroups.  \newblock {\em J. Reine Angew. Math.},
  546:47--72, 2002.

\bibitem{Kondratiev67} V.~A. Kondrat{\cprime}ev.  \newblock Boundary
  value problems for elliptic equations in domains with conical or
  angular points.  \newblock {\em Transl. Moscow Math. Soc.},
  16:227--313, 1967.

\bibitem{KMR} V.~Kozlov, V.~Maz{\cprime}ya, and J.~Rossmann.
  \newblock {\em Spectral problems associated with corner
    singularities of solutions to elliptic equations}, volume~85 of
            {\em Mathematical Surveys and Monographs}.  \newblock
            American Mathematical Society, Providence, RI, 2001.

\bibitem{KrainerEdge14} T.~Krainer.  \newblock A calculus of abstract
  edge pseudodifferential operators of type $(\rho, \delta)$.
  \newblock arXiv:math/1403.6100 [math.AP], 2014.

\bibitem{LMN1} R.~Lauter, B.~Monthubert, and V.~Nistor.  \newblock
  Pseudodifferential analysis on continuous family groupoids.
  \newblock {\em Doc. Math.}, 5:625--655 (electronic), 2000.

\bibitem{LNGeometric} R.~Lauter and V.~Nistor.  \newblock Analysis of
  geometric operators on open manifolds: a groupoid approach.
  \newblock In {\em Quantization of singular symplectic quotients},
  volume 198 of {\em Progr. Math.}, pages 181--229. Birkh\"auser,
  Basel, 2001.

\bibitem{Mantoiu2} M.~Lein, M.~M{\u{a}}ntoiu, and S.~Richard.
  \newblock Magnetic pseudodifferential operators with coefficients in
            {$C^\ast$}-algebras.  \newblock {\em
              Publ. Res. Inst. Math. Sci.}, 46(4):755--788, 2010.

\bibitem{Lesch} M.~Lesch.  \newblock {\em Operators of {F}uchs type,
  conical singularities, and asymptotic methods}, volume 136 of {\em
  Teubner-Texte zur Mathematik [Teubner Texts in Mathematics]}.
  \newblock B. G. Teubner Verlagsgesellschaft mbH, Stuttgart, 1997.

\bibitem{VertmanLesch15} M.~Lesch and B.~Vertman.  \newblock
  Regularizing infinite sums of zeta-determinants.  \newblock {\em
    Math. Ann.}, 361(3-4):835--862, 2015.

\bibitem{MackenzieBook1} K.~Mackenzie.  \newblock {\em Lie groupoids
  and {L}ie algebroids in differential geometry}, volume 124 of {\em
  LMS Lect. Note Series}.  \newblock Cambridge U. Press, Cambridge,
  1987.

\bibitem{MackenzieBook2} K.~Mackenzie.  \newblock {\em General theory
  of {L}ie groupoids and {L}ie algebroids}, volume 213 of {\em LMS
  Lect. Note Series}.  \newblock Cambridge U. Press, Cambridge, 2005.

\bibitem{mantoiuGGroups} M.~M{\u{a}}ntoiu.  \newblock Essential
  spectrum and {F}redholm properties for operators on locally compact
  groups.  \newblock to appear in Journal of Operator Theory, preprint
  http://arxiv.org/abs/1510.05308 [math.SP].

\bibitem{MantoiuReine} M.~M{\u{a}}ntoiu.  \newblock
  {$C^\ast$}-algebras, dynamical systems at infinity and the essential
  spectrum of generalized {S}chr\"odinger operators.  \newblock {\em
    J. Reine Angew. Math.}, 550:211--229, 2002.

\bibitem{Mantoiu1} M.~M{\u{a}}ntoiu, R.~Purice, and S.~Richard.
  \newblock Spectral and propagation results for magnetic
            {S}chr\"odinger operators; a {$C^*$}-algebraic framework.
            \newblock {\em J. Funct. Anal.}, 250(1):42--67, 2007.

\bibitem{Mazzeo91} R.~Mazzeo.  \newblock {Elliptic theory of
  differential edge operators. I.}  \newblock {\em Commun. Partial
  Differ. Equations}, 16(10):1615--1664, 1991.

\bibitem{Rozenblyum96} M.~Melgaard and G.~Rozenblum.  \newblock
  Spectral estimates for magnetic operators.  \newblock {\em
    Math. Scand.}, 79(2):237--254, 1996.

\bibitem{MeloNestSchrohe} S.~T. Melo, R.~Nest, and E.~Schrohe.
  \newblock {$C\sp \ast$}-structure and {$K$}-theory of {B}outet de
            {M}onvel's algebra.  \newblock {\em J. Reine
              Angew. Math.}, 561:145--175, 2003.

\bibitem{MelroseAPS} R.~Melrose.  \newblock {\em {The
    Atiyah-Patodi-Singer index theorem.}}  \newblock {Research Notes
  in Mathematics (Boston, Mass.). 4. Wellesley, MA: A.  K. Peters,
  Ltd.. xiv, 377 p. }, 1993.

\bibitem{MelrosePiazza} R.~Melrose and P.~Piazza.  \newblock Analytic
  {$K$}-theory on manifolds with corners.  \newblock {\em Adv. Math.},
  92(1):1--26, 1992.

\bibitem{MelroseScattering} R.~B. Melrose.  \newblock {\em Geometric
  scattering theory}.  \newblock Stanford Lectures. Cambridge
  University Press, Cambridge, 1995.

\bibitem{MoerdijkFolBook} I.~Moerdijk and J.~Mr{\v{c}}un.  \newblock
  {\em Introduction to foliations and {L}ie groupoids}, volume~91 of
  {\em Cambridge Studies in Advanced Mathematics}.  \newblock
  Cambridge University Press, Cambridge, 2003.

\bibitem{Monthubert} B.~Monthubert.  \newblock Pseudodifferential
  calculus on manifolds with corners and groupoids.  \newblock {\em
    Proc. Amer. Math. Soc.}, 127(10):2871--2881, 1999.

\bibitem{Monthubert03} B.~Monthubert.  \newblock Groupoids and
  pseudodifferential calculus on manifolds with corners.  \newblock
  {\em J. Funct. Anal.}, 199(1):243--286, 2003.

\bibitem{mougel1} J.~Mougel, V.~Nistor, and N.~Prudhon.  \newblock A
  refined {HVZ}-theorem for asymptotically homogeneous interactions
  and finitely many collision planes.  \newblock Hal preprint 2017, to
  appear in Revue Romaine de Math\'ematiques Pures et Appliqu\'es.

\bibitem{MRW87} P.~Muhly, J.~Renault, and D.~Williams.  \newblock
  Equivalence and isomorphism for groupoid {$C^\ast$}-algebras.
  \newblock {\em J. Operator Theory}, 17(1):3--22, 1987.

\bibitem{MRW96} P.~S. Muhly, J.~Renault, and D.~Williams.  \newblock
  Continuous-trace groupoid {$C^\ast$}-algebras. {III}.  \newblock
  {\em Trans. Amer. Math. Soc.}, 348(9):3621--3641, 1996.

\bibitem{SchulzeIE} V.~E. Naza{\u\i}kinski{\u\i}, A.~Yu. Savin,
  B.~Yu. Sternin, and B.-W. Shulze.  \newblock On the index of
  elliptic operators on manifolds with edges.  \newblock {\em
    Mat. Sb.}, 196(9):23--58, 2005.

\bibitem{NP} S.~A. Nazarov and B.~A. Plamenevsky.  \newblock {\em
  Elliptic problems in domains with piecewise smooth boundaries},
  volume~13 of {\em de Gruyter Expositions in Mathematics}.  \newblock
  Walter de Gruyter \& Co., Berlin, 1994.

\bibitem{NicolaRodinoBook} F.~Nicola and L.~Rodino.  \newblock {\em
  Global pseudo-differential calculus on {E}uclidean spaces}, volume~4
  of {\em Pseudo-Differential Operators. Theory and Applications}.
  \newblock Birkh\"auser Verlag, Basel, 2010.

\bibitem{nistorDesing} V.~Nistor.  \newblock Desingularization of
  {L}ie groupoids and pseudodifferential operators on singular spaces.
  \newblock http://arxiv.org/abs/1512.08613 [math.DG], to appear in
  Communications in Analysis and Geometry.

\bibitem{nistorJapan} V.~Nistor.  \newblock Groupoids and the
  integration of {L}ie algebroids.  \newblock {\em
    J. Math. Soc. Japan}, 52(4):847--868, 2000.

\bibitem{frascatti} V.~Nistor.  \newblock Analysis on singular spaces:
  {L}ie manifolds and operator algebras.  \newblock {\em
    J. Geom. Phys.}, 105:75--101, 2016.

\bibitem{nistorPrudhon} V.~Nistor and N.~Prudhon.  \newblock
  Exhausting families of representations and spectra of
  pseudodifferential operators.  \newblock preprint [math.OA],
  http://arxiv.org/abs/1411.7921, to appear in J.  Oper. Theory.

\bibitem{NWX} V.~Nistor, A.~Weinstein, and Ping Xu.  \newblock
  Pseudodifferential operators on differential groupoids.  \newblock
  {\em Pacific J. Math.}, 189(1):117--152, 1999.

\bibitem{Parenti} C.~Parenti.  \newblock Operatori pseudodifferentiali
  in {$\mathbb{R}^n$} e applicazioni.  \newblock {\em Annali Mat. Pura
    ed App.}, 93:391--406, 1972.

\bibitem{PatersonBook} A.~Paterson.  \newblock {\em Groupoids, inverse
  semigroups, and their operator algebras}, volume 170 of {\em
  Progress in Mathematics}.  \newblock Birkh\"auser Boston, Inc.,
  Boston, MA, 1999.

\bibitem{PlamenevskiBook} B.~A. Plamenevski{\u\i}.  \newblock {\em
  Algebras of pseudodifferential operators}, volume~43 of {\em
  Mathematics and its Applications (Soviet Series)}.  \newblock Kluwer
  Academic Publishers Group, Dordrecht, 1989.  \newblock Translated
  from the Russian by R. A. M. Hoksbergen.

\bibitem{rabinovichRochGe08} V.~Rabinovich and S.~Roch.  \newblock
  Essential spectrum and exponential decay estimates of solutions of
  elliptic systems of partial differential equations. {A}pplications
  to {S}chr\"odinger and {D}irac operators.  \newblock {\em Georgian
    Math. J.}, 15(2):333--351, 2008.

\bibitem{RochBookLimit} V.~Rabinovich, S.~Roch, and B.~Silbermann.
  \newblock {\em Limit operators and their applications in operator
    theory}, volume 150 of {\em Operator Theory: Advances and
    Applications}.  \newblock Birkh\"auser, 2004.

\bibitem{Rab3} V.~Rabinovich, B.-W. Schulze, and N.~Tarkhanov.
  \newblock {$C^*$}-algebras of singular integral operators in domains
  with oscillating conical singularities.  \newblock {\em Manuscripta
    Math.}, 108(1):69--90, 2002.

\bibitem{Simon4} M.~Reed and B.~Simon.  \newblock {\em Methods of
  modern mathematical physics. {IV}. {A}nalysis of operators}.
  \newblock Academic Press [Harcourt Brace Jovanovich Publishers], New
  York, 1978.

\bibitem{renaultBook} J.~Renault.  \newblock {\em A groupoid approach
  to {$C^{\ast} $}-algebras}, volume 793 of {\em LNM}.  \newblock
  Springer, 1980.

\bibitem{renault91} J.~Renault.  \newblock The ideal structure of
  groupoid crossed product {$C^\ast$}-algebras.  \newblock {\em
    J. Operator Theory}, 25(1):3--36, 1991.  \newblock With an
  appendix by G. Skandalis.

\bibitem{rieffelInducedCstar} M.~Rieffel.  \newblock Induced
  representations of {$C^{\ast} $}-algebras.  \newblock {\em Advances
    in Math.}, 13:176--257, 1974.

\bibitem{Roch} S.~Roch.  \newblock Algebras of approximation
  sequences: structure of fractal algebras.  \newblock In {\em
    Singular integral operators, factorization and applications},
  volume 142 of {\em Oper. Theory Adv. Appl.}, pages
  287--310. Birkh\"auser.

\bibitem{RochBookNGT} S.~Roch, P.~Santos, and B.~Silbermann.
  \newblock {\em Non-commutative {G}elfand theories}.  \newblock
  Universitext. Springer-Verlag London, Ltd., London, 2011.

\bibitem{RouseLescureMont} P.~Carrillo Rouse, J.~M. Lescure, and
  B.~Monthubert.  \newblock A cohomological formula for the
  {A}tiyah-{P}atodi-{S}inger index on manifolds with boundary.
  \newblock {\em J. Topol. Anal.}, 6(1):27--74, 2014.

\bibitem{RuzhanskyBook} M.~Ruzhansky and V.~Turunen.  \newblock {\em
  Pseudo-differential operators and symmetries}, volume~2 of {\em
  Pseudo-Differential Operators. Theory and Applications}.  \newblock
  Birkh\"auser Verlag, Basel, 2010.  \newblock Background analysis and
  advanced topics.

\bibitem{SchroheSymbol} E.~Schrohe.  \newblock The symbols of an
  algebra of pseudodifferential operators.  \newblock {\em Pacific
    J. Math.}, 125(1):211--224, 1986.

\bibitem{SchroheSG} E.~Schrohe.  \newblock A {$\Psi^*$} algebra of
  pseudodifferential operators on noncompact manifolds.  \newblock
  {\em Arch. Math. (Basel)}, 51(1):81--86, 1988.

\bibitem{SchroheFrechet} E.~Schrohe.  \newblock Fr\'echet algebra
  techniques for boundary value problems on noncompact manifolds:
  {F}redholm criteria and functional calculus via spectral invariance.
  \newblock {\em Math. Nachr.}, 199:145--185, 1999.

\bibitem{SchSch1} E.~Schrohe and B.-W. Schulze.  \newblock Boundary
  value problems in {B}outet de {M}onvel's algebra for manifolds with
  conical singularities. {I}.  \newblock In {\em Pseudo-differential
    calculus and mathematical physics}, volume~5 of {\em Math. Top.},
  pages 97--209. Akademie Verlag, Berlin, 1994.

\bibitem{SchulzeBook91} B.-W. Schulze.  \newblock {\em
  Pseudo-differential operators on manifolds with singularities},
  volume~24 of {\em Studies in Mathematics and its Applications}.
  \newblock North-Holland Publishing Co., 1991.

\bibitem{Seeley59} R.~T. Seeley.  \newblock Singular integrals on
  compact manifolds.  \newblock {\em Amer. J. Math.}, 81:658--690,
  1959.

\bibitem{Seeley63} R.~T. Seeley.  \newblock The index of elliptic
  systems of singular integral operators.  \newblock {\em
    J. Math. Anal. Appl.}, 7:289--309, 1963.

\bibitem{simanca} S.~R. Simanca.  \newblock {\em Pseudo-differential
  operators}, volume 171 of {\em Pitman research notes in
  mathematics}.  \newblock Longman Scientific \& Technical, Harlow,
  Essex, 1990.

\bibitem{SW12} A.~Sims and D.~Williams.  \newblock Renault's
  equivalence theorem for reduced groupoid {$C^\ast$}-algebras.
  \newblock {\em J. Operator Theory}, 68(1):223--239, 2012.

\bibitem{BKSo2} Bing~Kwan So.  \newblock On the full calculus of
  pseudo-differential operators on boundary groupoids with polynomial
  growth.  \newblock {\em Adv. Math.}, 237:1--32, 2013.

\bibitem{TaylorGlf} M.~Taylor.  \newblock Gelfand theory of pseudo
  differential operators and hypoelliptic operators.  \newblock {\em
    Trans. Amer. Math. Soc.}, 153:495--510, 1971.

\bibitem{TeschlBook} G.~Teschl.  \newblock {\em Mathematical methods
  in quantum mechanics}, volume 157 of {\em Graduate Studies in
  Mathematics}.  \newblock American Mathematical Society, Providence,
  RI, second edition, 2014.  \newblock With applications to
  Schr\"odinger operators.

\bibitem{Tu04} J.~Tu.  \newblock Non-{H}ausdorff groupoids, proper
  actions and {$K$}-theory.  \newblock {\em Doc. Math.}, 9:565--597,
  2004.

\bibitem{vanErpYuncken} E.~Van~Erp and R.~Yuncken.  \newblock A
  groupoid approach to pseudodifferential operators.  \newblock
  http://arxiv.org/abs/1511.01041 [math.DG], 2015.

\bibitem{vanErpYuncken0} E.~Van~Erp and R.~Yuncken.  \newblock On the
  tangent groupoid of a filtered manifold.  \newblock
  http://arxiv.org/abs/1611.01081 [math.DG], 2016.

\bibitem{vassout} S.~Vassout.  \newblock Unbounded pseudodifferential
  calculus on {L}ie groupoids.  \newblock {\em J. Funct. Anal.},
  236(1):161--200, 2006.

\bibitem{VertmanHeatEdge15} B.~Vertman.  \newblock Heat-trace
  asymptotics for edge {L}aplacians with algebraic boundary
  conditions.  \newblock {\em J. Anal. Math.}, 125:285--318, 2015.

\bibitem{vishikGrushin} M.~I. Vi{\v{s}}ik and V.~V. Gru{\v{s}}in.
  \newblock Degenerate elliptic differential and pseudodifferential
  operators.  \newblock {\em Uspehi Mat. Nauk}, 25(4(154)):29--56,
  1970.

\bibitem{WilliamsBook} D.~Williams.  \newblock {\em Crossed products
  of {$C{^\ast}$}-algebras}, volume 134 of {\em Mathematical Surveys
  and Monographs}.  \newblock American Mathematical Society,
  Providence, RI, 2007.

\end{thebibliography}

\def\cprime{$'$}

\end{document}